\newcommand{\II}{I\hspace{-0.1cm}I}
\newcommand{\Pidang}{\cP^{\rm ideal,ang}_{\Gamma,\theta}}
\newcommand{\Pidlen}{\cP^{\rm ideal,len}_{\Gamma,l}}
\newcommand{\Pidmet}{\cP^{\rm ideal,met}_{h}}
\newcommand{\Pidn}{\cP^{\rm ideal}_{n}}
\newcommand{\Pcomet}{\cP^{\rm cpt,met}_{h}}
\newcommand{\Pcon}{\cP^{\rm cpt}_{n}}
\newcommand{\Pcodua}{\cP^{\rm cpt,dual}_{h^*}}
\newcommand{\Phyang}{\cP^{\rm hyper,ang}_{\Gamma,\theta}}
\newcommand{\Phymet}{\cP^{\rm hyper,met}_{h}}
\newcommand{\VRcomet}{V_R^{\rm cpt,met}}
\newcommand{\VRcodua}{V_R^{\rm cpt,dual}}
\newcommand{\VRidang}{V_R^{\rm ideal,ang}}
\newtheorem{theorem}{\rm\bf Theorem}[section]
\newtheorem{conjecture}[theorem]{\rm\bf Conjecture}
\newtheorem{lemma}[theorem]{\rm\bf Lemma}
\newtheorem{definition}[theorem]{\rm\bf Definition}
\newtheoremstyle{named}{}{}{\itshape}{}{\bfseries}{.}{.5em}{#1 \thmnote{#3}}
\theoremstyle{named}
\newcommand{\C}{{\mathbb C}}
\newcommand{\CP}{{\mathbb CP}}
\newcommand{\HH}{{\mathbb H}}
\newcommand{\R}{{\mathbb R}}
\newcommand{\Z}{{\mathbb Z}}
\newcommand{\cC}{{\mathcal C}}
\newcommand{\bE}{{\mathbb E}}
\newcommand{\cE}{{\mathcal E}}
\newcommand{\cH}{{\mathcal H}}
\newcommand{\cS}{{\mathcal S}}
\newcommand{\cP}{{\mathcal P}}
\newcommand{\cT}{{\mathcal T}}
\newcommand{\PSL}{\rm{PSL}}
\newcommand{\tr}{\rm{tr}}
\newcommand{\dev}{\rm{dev}}
\newcommand{\Sigeps}[1]{{\Sigma_{Eps,#1}}}
\newcounter{notes}%
\def\interieur#1{\mathord{\mathop{\kern 0pt #1}\limits^\circ}}
\title[]{Projective rigidity  of circle patterns and polyhedral surfaces in hyperbolic ends}
\author{Jean-Marc Schlenker}
\address{Jean-Marc Schlenker:
University of Luxembourg, FSTM, Department of Mathematics, 
Maison du nombre, 6 avenue de la Fonte,
L-4364 Esch-sur-Alzette, Luxembourg}
\email{jean-marc.schlenker@uni.lu}
\date{v0, \today}
\begin{document}

\begin{abstract}
  Let $S$ be a closed, orientable surface of genus $g\geq 2$. We consider Delaunay circle patterns on $S$ equipped with a complex projective structure. We prove that the space of complex projective structures on $S$ equipped with a Delaunay circle pattern of prescribed combinatorics and intersection angles is a manifold of dimension $6g-6$, and that the forgetful map to the space $\cC_S$ of $\CP^1$-structures on $S$ is a Lagrangian immersion. This extends a recent result of Bonsante and Wolf for circle packings.

  This statement, and its proof, are more conveniently stated in terms of ideal polyhedral surfaces (surfaces with vertices at infinity) in hyperbolic ends, with the angles between the circles corresponding to the dihedral angles. Seen from this angle, we extend the statement to ideal polyhedral surfaces with prescribed edge lengths (or induced metrics), and to other types of polyhedral surfaces, either compact or hyperideal. 
\end{abstract}

\maketitle

\tableofcontents
 
\section{Main statements}

\subsection{Complex projective structures on surfaces and the KMT conjecture}
\label{ssc:kmt}

We consider here a closed, oriented surface $S$ of genus at least $2$. We are interested in the circle packings, or more generally circle patterns, which can be considered on $S$ equipped with a complex projective structure (or $CP^1$-structure), see Section \ref{ssc:cp1} for the definitions. We denote by $\cC_S$ the space of $CP^1$-structures on $S$, considered up to isotopy, and by $\cT_S$ the Teichm\"uller space of $S$, defined as the space of complex structures on $S$, considered also up to isotopy. Any $CP^1$-structure on $S$ has an underlying complex structures, so there is well-defined ``forgetful'' map $\pi:\cC_S\to \cT_S$.

The space $\cC_S$ is moreover naturally equipped with a (complex) symplectic structure. There are in fact several possible symplectic structures that can be considered on $\cC_S$, but we will be particularly interested here in one such symplectic form, denoted by $\omega$, which is defined through an identification of $\cC_S$ with $T^*\cT_S$, see Section \ref{ssc:symplectic}. 

There is a natural notion of disk on a surface $S$ equipped with a $CP^1$-structure $\sigma$, and therefore a natural notion of circle packing, which can be defined as a collection of disjoint open disks. Given a circle packing, its {\em nerve} is defined as the graph embedded in $S$ that has a vertex for each disk, and an edge between two vertices if and only if the corresponding disks are tangent. 

Let $\Gamma$ be a graph embedded in $S$, which is the 1-skeleton of a triangulation. We will denote by $\cC_\Gamma$ the space of pairs $(\sigma, C)$, where $\sigma$ is a $CP^1$-structure on $S$ equipped with a circle packing $C$ with nerve $\Gamma$. We denote by $\pi_\Gamma=\pi_{|\cC_\Gamma}:\cC_\Gamma\to \cT_S$ the restriction of $\pi$ to $\cC_\Gamma$.

We are motivated by the following remarkable statement, proposed by Kojima, Mizushima and Tan \cite{KMT,KMT2,KMT3}.

\begin{conjecture}[KMT conjecture] \label{cj:kmt}
  Let $\Gamma$ be the 1-skeleton of a triangulation of $S$. Then $\pi_\Gamma:\cC_\Gamma\to \cT_S$ is a homeomorphism.
\end{conjecture}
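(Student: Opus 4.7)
The plan is to follow the standard strategy for proving that a map between manifolds of the same dimension is a homeomorphism: establish that $\pi_\Gamma$ is (i) a local diffeomorphism, (ii) proper, and (iii) a degree-one covering map over the contractible space $\cT_S$.

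For (i), I would invoke the main theorem of the present paper, which asserts that $\cC_\Gamma$ is a smooth manifold of dimension $6g-6$ and that the natural inclusion $\iota:\cC_\Gamma\to\cC_S$ is a Lagrangian immersion. Since $\dim\cC_\Gamma=\dim\cT_S$, it suffices to show that at every point the differential $d\pi_\Gamma$ has trivial kernel, equivalently that $\iota(\cC_\Gamma)$ is everywhere transverse to the fibers of $\pi:\cC_S\to\cT_S$. Under the identification $\cC_S\cong T^*\cT_S$, a Lagrangian is generically the graph of a closed one-form on $\cT_S$; the critical points (``caustics'') are precisely the places where transversality fails. I would try to rule them out by exhibiting $\cC_\Gamma$ as the graph of the differential of an explicit functional $H$ on $\cT_S$ of Schl\"afli type, and showing that $H$ is smooth with nondegenerate, preferably definite, Hessian; the Hessian computation is the analytic heart of the argument and should exploit the infinitesimal rigidity of polyhedral surfaces in hyperbolic ends that underlies the paper's Lagrangian statement.

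For (ii), properness, I would work in the ideal polyhedral reformulation: a point of $\cC_\Gamma$ corresponds to an ideal polyhedral surface in a hyperbolic end whose projective boundary at infinity is the given $\CP^1$-structure. Given a sequence $(\sigma_n,C_n)$ with $\pi_\Gamma(\sigma_n,C_n)$ staying in a compact subset of $\cT_S$, one must extract a convergent subsequence in $\cC_\Gamma$. The degenerations to rule out are an edge dihedral angle tending to $0$ or $\pi$, two ideal vertices coalescing, or the polyhedral surface escaping into the end. Each of these should be shown to force the induced conformal structure at infinity to leave every compact of $\cT_S$, via Margulis-collar arguments, grafting estimates, or monotonicity of the functional $H$ above. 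This is the step I expect to be the main obstacle, since the boundary behavior of families of $\CP^1$-structures together with their circle patterns is genuinely delicate, and it is presumably also the reason the full KMT conjecture remains open.

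For (iii), once $\pi_\Gamma$ is a proper local diffeomorphism it is a covering map, and since $\cT_S$ is contractible the degree equals the cardinality of a single fiber. I would compute that fiber at a convenient base point, for instance over a Fuchsian point, and argue uniqueness there either via strict convexity of $H$ (an exact Lagrangian over a contractible base whose primitive is strictly convex has a unique critical point, hence a unique preimage), or directly via the classical Koebe-Andreev-Thurston rigidity of circle packings applied to the hyperbolic representative of the underlying complex structure.
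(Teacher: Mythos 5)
The statement you were asked to prove is Conjecture \ref{cj:kmt}, and it is precisely that: a conjecture, not a theorem of the paper. The paper does not prove it, and it remains open in general (as you yourself note near the end of step (ii)). What the paper does prove are partial steps: that $\cC_{\Gamma,\theta}$ is a smooth manifold of dimension $6g-6$, that the map to $\cC_S$ is a Lagrangian immersion, and (via a citation) that the relevant projection to $\cT_S$ is proper. There is therefore no ``paper's proof'' of the conjecture to compare your proposal against; what can be assessed is whether your strategy correctly identifies the known results and the genuine gap.

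On that score there is a concrete error. You describe step (ii), properness, as ``the step I expect to be the main obstacle \dots presumably also the reason the full KMT conjecture remains open.'' This is backwards relative to what the paper says explicitly: the projection from the space of ideal polyhedral surfaces with prescribed dihedral angles to $\cT_S$ (via the conformal boundary at infinity) is \emph{already known} to be proper, with a reference given. The step the paper singles out as missing is exactly your step (i): showing that the Lagrangian submanifold is nowhere tangent to the fibers of $\pi:\cC_S\to\cT_S$, i.e.\ that $d\pi_\Gamma$ is injective. So the hard part is (i), not (ii).

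Your plan for step (i) also has a circularity. You propose to rule out caustics ``by exhibiting $\cC_\Gamma$ as the graph of the differential of an explicit functional $H$ on $\cT_S$,'' and then to show $H$ has nondegenerate Hessian. But asserting that $\cC_\Gamma$ is the graph of $dH$ over $\cT_S$ is already equivalent to $\pi_\Gamma$ being a bijection (and, for smooth $H$, a diffeomorphism); at that point the Hessian computation is superfluous. Conversely, before you know transversality you do not have a function on $\cT_S$: the renormalized-volume-type potentials the paper produces live on $\cC_\Gamma$ (or on $\Pidang$), not on $\cT_S$, and their differential is identified with $-\lambda$ on the Lagrangian. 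Pushing such a potential down to $\cT_S$ requires $\pi_\Gamma$ to be at least a local diffeomorphism, which is what you are trying to prove. A non-circular version of your idea would be to estimate a second-order variation of the Schl\"afli/renormalized-volume functional \emph{along the fibers of $\pi$} and show definiteness; that is a real program, but you would need to formulate it without presupposing the graph property, and the paper gives no such estimate. Your step (iii) is fine as stated once (i) and (ii) are in hand.
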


This statement is true when $S$ has genus $0$, it is then equivalent to the classical Koebe circle packing theorem \cite{koebe}. For surfaces of genus $1$, it was recently proved by Wayne Lam \cite{lam2019quadratic,lam2024space}. For surfaces of higher genus, it was recently proved by Bonsante and Wolf \cite{bonsante-wolf:conformal}, under a somewhat restrictive combinatorial condition on $\Gamma$ (all vertices should have degree congruent to $2$ modulo $4$). 

We are in fact motivated here by an extension of Conjecture \ref{cj:kmt}, from circle packings to Delaunay circle patterns, in the following sense.

\begin{definition}[Delaunay circle patterns]
  A {\em Delaunay circle pattern} on $(S, \sigma)$ is a family of open disks $D=(D_1,\cdots, D_v)$ embedded in $(S,\sigma)$ such that:
  \begin{itemize}
  \item $S\setminus(\cup_{i=1}^v D_i)=F$ is a finite set,
  \item each of the lift to $\tilde S$ of one of the $D_i, 1\leq i\leq v$ has at least 3 points of the lift of $F$ in its boundary.
  \end{itemize}
  We will always consider Delaunay circle patterns up to relabeling their disks.
\end{definition}

\begin{definition}[Intersection data of a Delaunay circle pattern]
  Given a Delaunay circle pattern $D=(D_1, \cdots, D_v)$, its {\em intersection data} is the pair $\Gamma,\theta)$ defined as follows.
  \begin{itemize}
  \item $\Gamma$ has
    \begin{itemize}
    \item one vertex for each of the $D_i$,
    \item an edge between two vertices corresponding to $D_i$ and $D_j$ ($i\neq j$) if and only if $D_i\cap D_j\neq \emptyset$ and the two points of $\partial D_i\cap \partial D_j$ are in $F$.
    \end{itemize}
  \item $\theta:E(\Gamma)\to (0,\pi)$ associates to an edge $e$ between the vertices corresponding to $D_i$ and $D_j$ the angle of the complement of $D_j$ in $D_i$ (or equivalently the complement of $D_i$ in $D_j$).
  \end{itemize}
\end{definition}

The intersection data of a Delaunay circle pattern is an admissible weighted graph, in the following sense -- see Section \ref{ssc:delaunay} for a proof of this rather classical fact.

\begin{definition}[Admissible weighted graphs] \label{df:weighted}
  An {\em admissible weighted graph} in $S$ is a graph $\Gamma$ embedded in $S$ which is the 1-skeleton of a cell decomposition, equipped with a function $\theta:E(\Gamma)\to (0,\pi)$ satisfying the conditions that
  \begin{itemize}
  \item the sum of the values of $\theta$ on the boundary of any face is $2\pi$,
  \item for each contractible cycle $\gamma$ in $\Gamma$ which is not contained in the boundary of a face, the sum of the values of $\theta$ is strictly larger than $2\pi$. 
  \end{itemize}
  Given a weighted graph $(\Gamma,\theta)$, we denote by $\cC_{\Gamma,\theta}$ the space of pairs $(\sigma, \cC)$ where $\sigma$ is a complex projective structure on $S$ and $\cC$ is a circle pattern on $(S,\sigma)$ with nerve $\Gamma$ and intersection angles given by $\theta$.
\end{definition}

The KMT conjecture, in the generalized form from \cite{delaunay}, is the following.

\begin{conjecture}[generalized KMT] \label{cj:kmtg}
  Let $(\Gamma,\theta)$ be an admissible weighted graph in $S$. Then the forgetful map from $\cC_{\Gamma,\theta}\to \cT_S$ is a homeomorphism.
\end{conjecture}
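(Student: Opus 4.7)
The plan is to prove the conjecture by the classical three-step scheme (manifold structure $+$ local rigidity $+$ properness) applied in the hyperbolic ends interpretation that the paper adopts, combined with a degree argument. First I would reinterpret $\cC_{\Gamma,\theta}$ as the moduli space of ideal polyhedral surfaces in hyperbolic ends over $S$ whose combinatorics is dual to $\Gamma$ and whose exterior dihedral angles are prescribed by $\theta$ (each intersection angle $\theta_e$ of two circles corresponding to the dihedral angle at the edge dual to $e$). This translates the statement into a statement about the existence and uniqueness of an ideal polyhedral surface of given combinatorial and angular type in the hyperbolic end determined by a point of $\cT_S$.

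Next I would establish that $\cC_{\Gamma,\theta}$ is a smooth manifold of dimension $6g-6$ and that the forgetful map $\pi_{\Gamma,\theta}:\cC_{\Gamma,\theta}\to \cT_S$ is a local diffeomorphism. The manifold structure should come from a Gauss-type map into an appropriate angle space, and the dimension count follows from the Euler characteristic constraints on an admissible $(\Gamma,\theta)$: the $|E(\Gamma)|$ angle equations cut $\cC_S$ (real dimension $12g-12$) down by the right amount once the face-sum identities and independent variables are taken into account. For the local diffeomorphism statement I would use the Lagrangian immersion result that the paper establishes: $\cC_{\Gamma,\theta}$ immerses into $\cC_S\cong T^*\cT_S$ as a Lagrangian submanifold, and one checks that the image is transverse to the fibers of $T^*\cT_S\to \cT_S$, which implies that $\pi_{\Gamma,\theta}$ has everywhere bijective differential.

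The main obstacle, as in all such results, is \emph{properness} of $\pi_{\Gamma,\theta}$. The plan is to take a sequence $(\sigma_n,C_n)\in \cC_{\Gamma,\theta}$ with $\pi_{\Gamma,\theta}(\sigma_n,C_n)$ staying in a compact subset of $\cT_S$ and to show, in the hyperbolic ends picture, that the corresponding polyhedral surfaces cannot degenerate. One must rule out: (i)~an edge length shrinking to $0$, which would force two disks to become concentric and violate the prescribed angle $\theta_e\in(0,\pi)$; (ii)~an edge length tending to $+\infty$, which in the ideal polyhedral surface picture is prevented by comparison with hyperbolic trigonometric identities at the ideal vertices; and (iii)~the polyhedral surface drifting to the ideal boundary of the hyperbolic end, which is controlled by the fact that the underlying conformal structure stays in a compact set of $\cT_S$ and hence so does the convex core of the end. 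The admissibility hypothesis on $(\Gamma,\theta)$ (strict inequality for every non-facial contractible cycle) should be exactly what is needed to exclude degenerations of type (i), in analogy with Rivin-style arguments.

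Once properness and local diffeomorphism are in hand, $\pi_{\Gamma,\theta}$ is a proper local diffeomorphism between two connected manifolds of the same finite dimension, hence a covering map. Since $\cT_S$ is diffeomorphic to $\R^{6g-6}$ and therefore simply connected, any such covering is a diffeomorphism, provided the source is nonempty. Nonemptiness I would establish by a deformation argument: start from a trivial or symmetric example (Fuchsian $\CP^1$-structure with an explicit Delaunay pattern of the prescribed combinatorics, produced by Rivin's existence theorem for ideal hyperbolic polyhedra or its equivariant generalization), and use the already-established fact that the image of $\pi_{\Gamma,\theta}$ is open and closed in $\cT_S$ to conclude surjectivity. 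The hardest step in this program is unambiguously the properness analysis, where the interplay between the degeneration of the hyperbolic end and the degeneration of the combinatorial/angular data on the polyhedral surface must be controlled simultaneously.
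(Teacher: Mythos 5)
This statement is a \emph{conjecture} in the paper, not a theorem: the paper does not prove it, and in fact explicitly identifies the remaining obstruction. You should therefore be alert to whether your sketch actually closes that gap.

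Your proposal has the correct high-level scheme (manifold structure $+$ local diffeomorphism $+$ properness $+$ simple connectivity of $\cT_S$), but the single step you dispose of most casually is precisely the open part. You write that you would ``use the Lagrangian immersion result'' and ``check that the image is transverse to the fibers of $T^*\cT_S\to\cT_S$,'' but a Lagrangian submanifold of a cotangent bundle has no reason whatsoever to be transverse to the fibers --- the fibers themselves are Lagrangian, and a Lagrangian can be tangent to, or even contained in, a fiber. Nothing in Theorem \ref{tm:prorig-delaunay} or Theorem \ref{tm:ideal} rules this out. The paper says this explicitly right after Theorem \ref{tm:ideal}: the projection from $\Pidang$ to $\cT_S$ is already known to be proper (by \cite{delaunay}), and ``the key point missing to prove Conjecture \ref{cj:kmtg} is therefore that the image of $\Pidang$ in $\cE_S$ is never tangent to the fibers of the projection from $\cE_S$ to $\cT_S$.'' So you have the emphasis inverted: you spend the bulk of your argument on properness, which is not actually where the difficulty lies, and you wave at the transversality/tangency question in one clause, when that is exactly what the projective-rigidity result does \emph{not} give. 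Projective rigidity says the circle pattern cannot be deformed with the $\CP^1$-structure fixed; what is needed is rigidity with only the underlying \emph{conformal} structure fixed, which is a strictly stronger statement and is the content of the conjecture. Until that is proved, the local-diffeomorphism step fails and the covering-space argument cannot start.

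There is a second, smaller issue: nonemptiness of $\cC_{\Gamma,\theta}$ does not come for free from Rivin's theorem, since one needs an \emph{equivariant} pattern with prescribed combinatorics and angles in a Fuchsian end, and one would need to cite or prove the appropriate equivariant existence result rather than invoke ``openness and closedness of the image,'' which presupposes the local-diffeomorphism property you have not yet established.
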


Conjecture \ref{cj:kmtg} has been proved, for tori, by Wayne Lam \cite{lam2019quadratic,lam2024space}.

\subsection{Complex projective structures with circle packing of given nerve}
\label{ssc:circle-packings}

An important motivation here is a recent result of Bonsante and Wolf \cite{bonsante-wolf:projective} who proved that the space of $\CP^1$-structures on a surface, admitting a circle packing with given combinatorics, is a submanifold of the space of the space $\cC_S$ of complex projective structures on $S$·

\begin{theorem}[Bonsante--Wolf] \label{tm:bonsante-wolf}
  Let $T$ be a quasi-simplicial triangulation on $S$, a surface of genus $g(S) \geq 2$. Then
  \begin{enumerate}
  \item The moduli space $\cC_T$ of pairs of projective structures and circle packings with nerve $T$ admits a natural manifold structure of of dimension $6g(S)-6$, so that
  \item The projection of $\cC_T$ to the space (of holonomies) of projective structures is a smooth immersion.
  \end{enumerate}
\end{theorem}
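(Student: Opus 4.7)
The plan is to interpret the data geometrically in terms of hyperbolic ends, as suggested by the abstract. A $\CP^1$-structure $\sigma$ on $S$ is naturally the ideal boundary of a hyperbolic end $E_\sigma$, a hyperbolic $3$-manifold homeomorphic to $S\times(0,\infty)$; in the universal cover, round disks in $\CP^1 = \partial_\infty\HH^3$ bound totally geodesic planes of $\HH^3$. A circle packing $C=(D_1,\dots,D_v)$ with nerve $T$ thus lifts to an equivariant family of totally geodesic planes $(P_1,\dots,P_v)\subset\HH^3$, two of which share a single ideal point precisely when the corresponding disks are tangent. Adjoining the ideal triangles that fill the interstices of the packing, one obtains an equivariant ideal polyhedral surface $\tilde\Sigma_C\subset\HH^3$ descending to $\Sigma_C\subset E_\sigma$, whose combinatorics is encoded by $T$.

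For the dimension count, introduce the ambient space $X$ of pairs $(\sigma,(D_1,\dots,D_v))$ with $\sigma\in\cC_S$ and each $D_i$ an arbitrary round disk on $(S,\sigma)$, without any tangency condition imposed. Since $\cC_S$ has real dimension $12g-12$ and the space of round disks on a $\CP^1$-surface is $3$-dimensional, $X$ is a smooth manifold of real dimension $12g-12+3v$. The subset $\cC_T\subset X$ is cut out by the $e$ tangency conditions, one per edge of $T$, each of real codimension one. Euler's formula $v-e+f=2-2g$ together with the identity $3f=2e$ for a triangulation gives $e=3v+6g-6$, so a transverse intersection yields a smooth manifold of real dimension $(12g-12+3v)-(3v+6g-6)=6g-6$, which would establish part (1) of the theorem.

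Both the transversality above and the immersion property of part (2) reduce to an infinitesimal rigidity statement for the ideal polyhedral surface $\tilde\Sigma_C$. A tangent vector at $(\sigma,C)$ has the form $(\dot\sigma,\dot D_1,\dots,\dot D_v)$; transversality amounts to independence of the $e$ linearized tangency conditions, while the immersion property requires that any such tangent vector satisfying all $e$ tangency conditions with $\dot\sigma=0$ must have every $\dot D_i=0$. In the polyhedral picture this is the statement that an equivariant infinitesimal deformation of $\tilde\Sigma_C$ preserving its ideal vertices at infinity, hence the ambient $\CP^1$-structure $\sigma$, must be trivial — a Cauchy-type rigidity theorem for ideal polyhedral surfaces in $\HH^3$ with prescribed combinatorics.

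The main obstacle is precisely this rigidity statement. Classical Cauchy-type theorems apply to convex polyhedra, whereas $\tilde\Sigma_C$ is generically non-convex and constrained by equivariance under a Kleinian action, so one must either appeal to or develop an extension of hyperbolic rigidity tailored to this setting. A natural line of attack is via the duality between ideal polyhedra in $\HH^3$ and hyperideal configurations in de Sitter space, combined with a Schl\"afli-type variational argument; alternatively, one can exploit the symplectic identification $\cC_S\cong T^*\cT_S$ to recast the immersion as a non-degeneracy statement for the Hessian of a suitable potential (for instance the hyperbolic volume of the portion of $E_\sigma$ truncated by $\Sigma_C$), which would then follow from strict convexity properties known in the theory of ideal hyperbolic polyhedra.
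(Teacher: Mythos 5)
Your geometric translation is exactly the paper's: circle packings lift to convex ideal polyhedral surfaces in the associated hyperbolic end (the paper passes via Thurston's trick through a Delaunay circle pattern with all intersection angles $\pi/2$, but your direct description of the surface as the planes $P_i$ plus the ideal interstice triangles is the same object), and the expected-dimension count $(12g-12+3v)-e=6g-6$ with $e=3v+6g-6$ is correct. You also correctly identify that both the manifold structure and the immersion statement reduce to an infinitesimal rigidity lemma: a first-order deformation of $\tilde\Sigma_C$ within a fixed end that preserves the combinatorics and the tangencies must be trivial.

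However, your proposal stops precisely where the real content begins: you do not prove the rigidity lemma, and the two directions you sketch for it do not match the actual mechanism. First, a factual slip: $\tilde\Sigma_C$ is \emph{not} ``generically non-convex'' — it is by construction the boundary of the convex hull of the tangency points in the end, hence convex, with all exterior dihedral angles equal to $\pi/2$. That convexity is not an obstacle but the key ingredient. Second, the paper's proof of rigidity (Theorem \ref{tm:proj-general} / Lemma \ref{lm:prorig-ideal*}) is a discrete Cauchy-type argument following Pak (and Bonsante--Wolf): an isometric infinitesimal deformation induces a partial orientation on the edges (via the sign of the projection of the deformation vector onto each edge), convexity at each vertex forces at most two sign changes around a vertex, triangularity forces at least one sign change in each face, and an Euler characteristic count ($\chi(S)<0$) gives the contradiction via Lemma \ref{lm:pak}. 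Neither a de Sitter Schl\"afli variational argument nor a Hessian-of-volume non-degeneracy is used or obviously usable here; in particular the ``Hessian of the truncated hyperbolic volume'' route runs into the fact that volume in the ideal setting is only defined after choosing horospheres, and its second variation does not yield the needed non-degeneracy without essentially reproving the same combinatorial rigidity. Finally, the deduction of transversality from rigidity is not automatic: the paper obtains it through an explicit adjointness between the operator $\phi$ (deformations in a fixed end acting on edge data) and the operator $\psi$ (variations of dihedral angles subject to two linear conditions per vertex), so that $\ker\phi=0$ implies surjectivity of $\psi$ and hence $\dim\ker\psi=6g-6$; your proposal asserts but does not establish this link.
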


This statement can be considered as a step towards a possible proof of Conjecture \ref{cj:kmt} -- a key step would in fact to prove a similar rigidity statement, but when one only fixes the complex structure of the $\CP^1$-structure, rather than the whole $\CP^1$-structure.

\subsection{Spaces of of Delaunay circle patterns}
\label{ssc:patterns}

We provide an extension of Theorem \ref{tm:bonsante-wolf} to Delaunay circle patterns, and add an additional Lagrangian property.

\begin{theorem} \label{tm:prorig-delaunay}
  Let $(\Gamma,\theta)$ be an admissible weighted graph in $S$. Then $\cC_{\Gamma,\theta}$ is an analytic manifold of dimension $6g-6$. Moreover, the restriction $\rho\circ i:\cC_{\Gamma,\theta}\to \cC_S$ is a smooth immersion which is Lagrangian for the symplectic form $\omega$ on $\cC_S$.
\end{theorem}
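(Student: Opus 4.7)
The plan is to pass to the dual language of ideal polyhedral surfaces in hyperbolic ends, which (as emphasised in the introduction) is where the geometry is transparent. Via the canonical bijection between $\cC_S$ and isotopy classes of hyperbolic ends with conformal boundary $(S,\sigma)$, an element $(\sigma, D)\in \cC_{\Gamma,\theta}$ corresponds to a hyperbolic end $M$ equipped with an ideal polyhedral surface $\Sigma\subset M$: the vertices of $\Sigma$ lie on $\partial_\infty M$, the faces lie in the totally geodesic hyperbolic planes bounded by the circles $\partial D_i$, and the dihedral angle along each edge is the intersection angle $\theta_e$ of the corresponding pair of circles. This identifies $\cC_{\Gamma,\theta}$ with $\Pidang$, so the whole problem translates into the existence, dimension, and rigidity of ideal polyhedral surfaces with prescribed combinatorics and dihedral angles inside hyperbolic ends.

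The first step is to exhibit the manifold structure and compute the dimension. The induced path metric on $\Sigma$ is a complete hyperbolic metric on $S$ with cusps at the vertices, parametrised by edge lengths $l:E(\Gamma)\to\R$. The admissibility conditions of Definition \ref{df:weighted} are precisely what is needed to apply a concave variational principle in the Rivin--Bobenko--Springborn--Fillastre--Izmestiev tradition: a suitable functional of the edge lengths has negative definite Hessian, and its gradient recovers the dihedral angles at each edge. Together with the standard closing-up constraints around each face and each cusp, this yields, via the implicit function theorem, an analytic manifold structure on $\Pidang$. A dimension count (edges minus face-angle relations minus cusp relations, matched against the Euler characteristic) produces $\dim \cC_{\Gamma,\theta}=6g-6$.

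The Lagrangian and immersion properties are expected to follow simultaneously from a Schl\"afli-type first-variation formula combined with the Krasnov--Schlenker identification $\cC_S\simeq T^*\cT_S$. Writing $W$ for the renormalised volume of the hyperbolic end, whose differential is (up to a universal constant) the tautological Liouville $1$-form on $T^*\cT_S$, and decomposing $M=\Sigma\cup (M\setminus\Sigma)$, I expect to derive an identity of the shape
\[
  dW \;=\; -\langle \alpha, d\sigma\rangle \;+\; \tfrac{1}{2}\sum_{e\in E(\Gamma)} l_e \, d\theta_e,
\]
where $\alpha\in T^*_\sigma\cT_S$ is the Schwarzian of $\sigma$. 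Restricting to $\cC_{\Gamma,\theta}$ kills the last term, so the image of $\cC_{\Gamma,\theta}$ in $T^*\cT_S$ is locally the graph of the exact $1$-form $-dW$, hence Lagrangian. The immersion property comes out as a byproduct: if the full $\CP^1$-structure is fixed infinitesimally, both $d\sigma$ and $\alpha$ vanish, and the rigidity of ideal polyhedral surfaces with prescribed angles in a fixed hyperbolic end (already used in the first step) forces the tangent vector in $\cC_{\Gamma,\theta}$ to be zero.

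The main obstacle is the Schl\"afli identity with its boundary contributions. The classical Schl\"afli formula concerns finite-volume hyperbolic polyhedra in a fixed ambient manifold; here both the ambient $M$ and the polyhedral surface $\Sigma$ are non-compact at infinity, so the volume term must be renormalised and the length sum must be interpreted in a truncation-independent way. Matching the resulting ``Schl\"afli defect'' exactly with the pairing $\langle \alpha, d\sigma\rangle$ coming from the complex symplectic structure on $\cC_S$, with the correct normalising constant and sign, is where the real work lies; this is the analogue, for Delaunay circle patterns and with the added Lagrangian conclusion, of the bookkeeping that Bonsante--Wolf carry out in the proof of Theorem \ref{tm:bonsante-wolf}.
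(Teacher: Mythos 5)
Your translation to ideal polyhedral surfaces in hyperbolic ends, and your treatment of the Lagrangian property via a renormalized-volume functional $W$ whose differential equals the Liouville form up to a Schl\"afli-type boundary term, is essentially the route the paper takes in Section \ref{sc:lagrangian} (the functional $\VRidang(\Sigma,\bE)=V_R(\Sigma_0)-V(\Sigma_0,\Sigma)$, whose variation yields $-\mathrm{Re}\langle\dot c,q\rangle+\tfrac12\sum_e l_e\dot\theta_e$). Your observation that the immersion property then drops out from projective rigidity is also exactly right.

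The manifold-structure step, however, is where the two proofs diverge, and where your argument has a genuine gap. The paper does not invoke a Rivin--Bobenko--Springborn concave variational principle. Instead it proceeds in three moves: (a) \emph{projective rigidity}, i.e.\ infinitesimal rigidity within a fixed hyperbolic end, is proved combinatorially by adapting Pak's sign-counting argument to polyhedral surfaces, with the added subtlety for the ideal case that the relevant ``displacement functions'' are affine functions on horospheres and the edge-length data lives in $\R^E/i(\R^V)$ (Lemma \ref{lm:prorig-ideal*}); (b) for the \emph{induced metric} version, the dimension $6g-6$ is obtained by exhibiting the deformation operators $\phi$ (within a fixed end) and $\psi$ (angle variations) as mutually adjoint, so that $\ker\phi=0$ forces $\psi$ surjective and hence $\dim\ker\psi = 6g-6$ by an Euler-characteristic count (Section \ref{ssc:defo-ideal}); and (c) the transfer from induced metrics to \emph{dihedral angles} uses the complex structure on the space $\Pidn$ of ideal polyhedral surfaces in cross-ratio coordinates: multiplication by $i$ interchanges infinitesimal deformations fixing the shears (hence the induced metric) with those fixing the angles, so the tangent spaces to $\Pidmet$ and $\Pidang$ have the same dimension (Section \ref{ssc:defo-ideal*}). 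The paper explicitly flags that one cannot simply run the adjointness argument on the dual surface for $\Pidang$, because the angles of the dual surface are only defined up to an additive constant per face.

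Your variational-principle substitute for this is under-specified in a way that matters. A concave functional with negative-definite Hessian can plausibly deliver \emph{uniqueness within a fixed end} (the analogue of (a)), but by itself it does not compute the dimension of $\Pidang$, because $\Pidang$ fibers over the space of hyperbolic ends, and that ambient space is itself $(6g-6)$-dimensional. Your proposed count ``edges minus face-angle relations minus cusp relations'' counts constraints on the \emph{angle data} for a fixed combinatorics, but nowhere tracks how the end varies; to make the implicit function theorem bite you need to know the rank of the linearized constraint map as a map out of the full space $\Pidn$ (of dimension $12g-12+2n$), which is precisely the content of the adjointness step (b). If you want to keep the variational route, you would need to identify the functional, its domain, and its Hessian precisely enough to show that the joint constraint (angles fixed, closing-up conditions) cuts out a submanifold of the correct codimension as the end varies --- i.e.\ you would end up re-proving the surjectivity of $\psi$ by other means, at which point the paper's direct adjointness computation is arguably the shorter path.
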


This statements has three parts. The first is the ``projective rigidity'' of Delaunay circle patterns, that is, the fact that they cannot be deformed with fixed intersection angles in a surface equipped with a fixed complex projective structure. The second part is that when allowing for variation of the complex projective structure, the deformation space has the expected dimension, that is, $6g-6$. It then follows that it is an (immersed) manifold. The third part, which is proved using different tools, is the fact that the image is Lagrangian in $(\cC_S,\omega)$. This last point can be compared to recent results of Lam \cite{lam:pullback}, for which other symplectic forms are considered.

Theorem \ref{tm:prorig-delaunay} is an extension of Theorem \ref{tm:bonsante-wolf} because circle packings can be considered as special cases of Delaunay circle patterns, see Section \ref{ssc:special}. However, the proof presented here is perhaps simpler than that of \cite{bonsante-wolf:projective}.

Theorem \ref{tm:prorig-delaunay} can be extended to more general types of circle patterns: either hyperideal circle patterns as in \cite{hcp,springborn-hcp,bobenko-lutz}, or patterns of circles allowing for more intersections (see \cite{bowers:combinatorics}). This follows from Theorems \ref{tm:hyperideal*} and \ref{tm:compact*} below on polyhedral surfaces. We will however focus on the polyhedral surfaces point of view, rather than on the (equivalent) description in terms of circle patterns, which also leads to other, dual statements (Theorem \ref{tm:ideal*}, \ref{tm:compact} and \ref{tm:hyperideal}), in terms of edges lengths rather than angles.

\subsection{Polyhedral surfaces in hyperbolic ends}

Theorem \ref{tm:prorig-delaunay} can be stated in terms of polyhedral surfaces in hyperbolic ends, and it is in this setting that the proofs will be easier to understand. We start by some definitions.

\begin{definition}[Hyperbolic ends]
  A {\em hyperbolic end} is a non-complete 3-dimensional hyperbolic manifold, homeomorphic to $S\times [0,\infty)$ for $S$ a closed surface of genus at least $2$, complete on the side corresponding to $\infty$, and bounded by a concave pleated surface on the side corresponding to $0$. 
  We denote by $\cE_S$ the space of hyperbolic metrics $g$ on $S\times [0,\infty)$ such that $(S\times [0,\infty), g)$ is a hyperbolic end, considered up to isotopy.
\end{definition}

Let $M$ be a quasifuchsian manifold (or more generally a convex co-compact hyperbolic manifolds), and let $C(M)$ be its convex core. Then each connected component of $M\setminus \rm{Int}(C(M))$ is a hyperbolic end. However some hyperbolic ends are not obtained in this manner, and are not isometric to a subset of a quasifuchsian manifold.

Given a hyperbolic end $\cE$, isometric to $(S\times [0,\infty), g)$ for a hyperbolic metric $g$, we will denote by $\partial_0\bE$ the boundary component of $\bE$ corresponding to $S\times \{ 0\}$, and by $\partial_\infty \bE$ the ideal boundary of $\bE$, corresponding to $S\times \{ \infty\}$. This ideal boundary is equipped with a $\CP^1$-structure, and there is a one-to-one correspondence between $\CP^1$-structures on $S$ and hyperbolic ends homeomorphic to $S\times [0,\infty)$, see Section \ref{ssc:complex-ends}.

We will use the following (standard) definition. A subset $K\subset \bE$ is {\em geodesically convex} if any geodesic segment $\gamma\subset \bE$ with endpoints in $K$ is contained in $K$. A convex subset is {\em proper} if it contains a neighborhood of $\partial_0\bE$. Unless otherwise specified, all the convex subsets we will consider will be closed in $\bE$. If $F\subset \bE$, the {\em convex hull} $CH(F)$ of $F$ in $\bE$ is the smallest convex subset of $\bE$ containing $F$. If $F\subset \bE\cap \partial_\infty \bE$, $CH(F)$ is the smallest closed convex subset $K$ of $\bE$ such that $F\subset K\cup \partial_\infty K$.

\begin{definition}[Ideal polyhedral surface]
   Let $E\in \cE_S$ be a hyperbolic end. An {\em ideal polyhedral surface} in $\cE$ is the boundary of a proper convex subset which is the convex hull of a finite set of points in $\partial_\infty \bE$.
\end{definition}

It follows from the definition (and in particular of the properness hypothesis) that ideal polyhedral surfaces are locally like an ideal hyperbolic polyhedron. Each point has a neighborhood which is contained in either in a totally geodesic plane, or in the union of two totally geodesic half-planes intersecting along their common boundary. Moreover, those ideal polyhedral surfaces are the union of {\em faces} which are isometric to ideal polygons, with disjoint interiors.

We are particularly interested in the sets of ideal polyhedral surfaces with given combinatorics and dihedral angles. Unless otherwise specified, we will always consider the {\em exterior} dihedral angles, that is, the angle is $0$ if two faces are coplanar with disjoint interiors.

\begin{definition}
  Let $(\Gamma,\theta)$ be an admissible weighted graph. We denote by $\Pidang$ the space of pairs $(\bE,\Sigma)$, where $\bE$ is a hyperbolic end and $\Sigma$ is an ideal polyhedral surface in $\bE$ with 1-skeleton  $\Gamma$ and exterior dihedral angles given by $\theta$.
\end{definition}

There is a one-to-one relation between Delaunay circle patterns and ideal polyhedral surfaces in hyperbolic ends, see Section \ref{ssc:ideal}. Moreover, intersection angles between the circles corresponds to the exterior dihedral angles of the corresponding ideal polyhedral surface.


Theorem \ref{tm:prorig-delaunay} is therefore equivalent to the following statement in terms of ideal polyhedral surfaces in hyperbolic ends.

\begin{theorem} \label{tm:ideal}
  Let $(\Gamma,\theta)$ be an admissible weighted graph in $S$. Then $\Pidang$ admits a natural analytic manifold structure of dimension $6g-6$, and its projection to $\cE_S$ is a smooth immersion which is Lagrangian for $\omega$.
\end{theorem}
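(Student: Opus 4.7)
The plan is to realize $\Pidang$ as a level set of an angle map on a larger finite-dimensional moduli space, deduce its manifold and immersion properties from infinitesimal rigidity, and then derive the Lagrangian property from a Schl\"afli-type identity.

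I would first introduce the ambient space $\cH_\Gamma$ of pairs $(\bE, \Sigma)$ with $\bE \in \cE_S$ and $\Sigma \subset \bE$ an ideal polyhedral surface whose $1$-skeleton has the combinatorial type of $\Gamma$, but without fixing the dihedral angles. The ideal vertices of $\Sigma$ correspond bijectively to the faces of $\Gamma$, so in a fixed $\bE$ the surface $\Sigma$ is parametrized by $\#F(\Gamma)$ ideal vertex positions in $\partial_\infty\bE \cong \CP^1$, subject to the planarity condition that each face of $\Sigma$ (corresponding to a vertex $v$ of $\Gamma$ of degree $d_v$) be a flat ideal polygon, contributing $d_v - 3$ scalar conditions. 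Letting $\bE$ vary in $\cE_S$, transversality gives $\cH_\Gamma$ an analytic manifold structure. The \emph{angle map} $\Psi\colon \cH_\Gamma \to \R^{\#E(\Gamma)}$ sends $(\bE, \Sigma)$ to the tuple of its exterior dihedral angles, and its image lies in the affine subspace $\mathcal{A}$ of codimension $\#F(\Gamma)$ cut out by the equations $\sum_{e\in \partial_\Gamma f}\theta_e = 2\pi$ for each face $f$ of $\Gamma$ --- these relations come from the horoball cross-section of $\Sigma$ at each ideal vertex being a closed Euclidean polygon.

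The heart of the proof is to show that $\Psi$ is a submersion onto $\mathcal{A}$. The crucial input is the following infinitesimal rigidity: no non-trivial infinitesimal deformation of $\Sigma$ in a fixed hyperbolic end $\bE$ preserves all its exterior dihedral angles. This is the analogue for ideal polyhedral surfaces in hyperbolic ends of the classical Rivin--Hodgson infinitesimal rigidity of ideal polyhedra in $\HH^3$; I would establish it by lifting to the universal cover and applying that classical rigidity to the local ideal polyhedral configurations bounded by $\Sigma$ and by the concave boundary $\partial_0 \bE$, then checking compatibility across shared edges. This rigidity directly yields the smooth immersion property of $\Pidang \to \cE_S$, since an element of the kernel of that differential would be an angle-preserving deformation of $\Sigma$ in a fixed $\bE$. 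An Euler-characteristic computation then gives $\dim \cH_\Gamma - \dim \mathcal{A} = 12g-12 + 3(\#V(\Gamma) - \#E(\Gamma) + \#F(\Gamma)) = 6g-6$, so by the analytic implicit function theorem $\Pidang = \Psi^{-1}(\theta)$ is an analytic submanifold of dimension $6g-6$.

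For the Lagrangian property, I would invoke a Schl\"afli-type identity: the pullback of $\omega$ to $\cH_\Gamma$ along $\cH_\Gamma \to \cE_S \cong \cC_S$ should equal, up to a nonzero constant, $\sum_{e \in E(\Gamma)} d\ell_e \wedge d\theta_e$, where $\ell_e$ is the horoball-renormalized length of the ideal edge $e$. On $\Pidang$ all $d\theta_e$ vanish, hence so does the pullback of $\omega$. I would derive this identity by differentiating the $W$-volume / dual volume function of the hyperbolic end bounded on the concave side by $\Sigma$, combined with the classical Schl\"afli formula applied to the region of $\bE$ between $\Sigma$ and $\partial_0 \bE$. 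The main obstacle I anticipate is the infinitesimal rigidity step, since the classical Rivin--Hodgson argument is formulated for ideal polyhedra in a fixed $\HH^3$, and transferring it to a polyhedral surface in a varying hyperbolic end with possibly non-triangular faces requires a careful separation of intrinsic deformations of $\Sigma$ from ambient deformations of $\bE$.
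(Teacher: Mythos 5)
Your overall scaffold (ambient space, angle map, dimension count, Schl\"afli identity for the Lagrangian property) is a reasonable and internally consistent plan, and in fact your ``kernel of $d\Psi$ has dimension between $6g-6$ and $6g-6$'' bookkeeping, once the rigidity is in place, does give the submersion. However, this is not the route the paper takes for Theorem~\ref{tm:ideal}: the paper explicitly notes that the direct angle-map approach needs extra care in the ideal case, and instead deduces the manifold structure from the edge-length statement (Theorem~\ref{tm:ideal*}) via the complex structure on the space of ideal polyhedral surfaces --- multiplication by $i$ on the cross-ratio parameters exchanges first-order deformations preserving shears with those preserving dihedral angles, so $T_\Sigma\Pidang = i\cdot T_\Sigma\Pidmet$ has dimension $6g-6$. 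The Lagrangian part of your proposal is essentially the same as the paper's (a Schl\"afli-type identity for a modified renormalized volume, giving $d\VRidang = -\lambda$ on $\Pidang$).

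The real problem is the rigidity step, and you have already half-noticed it. You propose to prove that no nontrivial deformation of $\Sigma$ inside a fixed end $\bE$ preserves all dihedral angles by ``lifting to the universal cover and applying [Rivin--Hodgson] classical rigidity to the local ideal polyhedral configurations bounded by $\Sigma$ and by the concave boundary $\partial_0\bE$, then checking compatibility across shared edges.'' This does not give a proof. The region of $\bE$ bounded by $\Sigma$ and $\partial_0\bE$ is not an ideal polyhedron --- $\partial_0\bE$ is a concave pleated surface, not a polyhedral one --- and its lift to $\HH^3$ is an infinite, non-compact, $\pi_1S$-periodic domain to which the rigidity of compact ideal polyhedra in $\HH^3$ does not apply. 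More fundamentally, rigidity is a \emph{global} statement: a nontrivial angle-preserving deformation of the whole equivariant surface could restrict to a perfectly nontrivial (non-extendable) flex of any local patch, so patching local rigidity statements together ``across shared edges'' cannot be made to work. What replaces this in the paper is a genuinely global combinatorial argument: one orients the edges of a triangulation of (the dual surface $\Sigma^*\subset\bE^*$ of) $\Sigma$ according to the sign of the projection of the deformation field, shows the resulting partial orientation is ``tight'' (at most two sign changes per vertex, one if enough edges are unoriented), and then Lemma~\ref{lm:pak} --- a counting argument of Pak type using $\chi(S)<0$ --- forces every edge to be unoriented, hence the deformation to vanish. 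Without an argument of this kind, your submersion claim, and hence the manifold structure and the immersion statement, remain unproved.
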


The projection from $\Pidang$ to $\cT_S$, sending a hyperbolic end to its conformal metric at infinity, is known to be proper \cite{delaunay}. The key point missing to prove Conjecture \ref{cj:kmtg} is therefore that the image of $\Pidang$ in $\cE_S$ is never tangent to the fibers of the projection from $\cE_S$ to $\cT_S$


\subsection{Edge lengths and induced metrics of ideal polyhedral surfaces}

There is another possible data one can consider on an ideal polyhedral surface $\Sigma$ in a hyperbolic end $\bE$, namely, the ``lengths'' of its edges. Since the vertices are at infinity, all edges have infinite length, so one needs to ``truncate'' a neighborhood of each vertex by removing a horoball centered at that vertex, and the length of an edge is then defined as the (oriented) distance between the horoballs associated to its endpoints. Replacing the horoball by another horoball centered at the same vertex $v$ adds (or substracts) a constant to the lengths of all edges adjacent to $v$, so the edge lengths of an ideal polyhedral surface $\Sigma\subset \bE$ is defined as an element of $\R^E/i(\R^V)$, where $E$ is the set of edges of $\Sigma$, $V$ is the set of its vertices, and $i:\R^V\to \R^E$ is the linear map sending $(a_v)_{v\in V}\in \R^V$ to $(a_{e_-}+a_{e_+})_{e\in E}\in \R^E$, where $e_-$ and $e_+$ denote the end points of an edge $e$.

\begin{definition}
  Let $l\in\R^E/i(\R^V)$. We denote by $\Pidlen$ the space of pairs $(\bE,\Sigma)$, where $\bE$ is a hyperbolic end and $\Sigma$ is an ideal polyhedral surface in $E$ with combinatorics $\Gamma$ and edge lengths given by $l$.
\end{definition}

The following statement can be considered as dual, in a precise way which will appear in Section \ref{ssc:ideal-dual}, to Theorem \ref{tm:ideal}. It could also be stated in terms of Delaunay circle patterns on surfaces equipped with a $\CP^1$-structure.

\begin{theorem} \label{tm:ideal*}
  Let $\Gamma$ be the 1-skeleton of a triangulation of $S$, and let $l\in \R^E/i(\R^V)$. Then $\Pidlen$ admits a natural analytic manifold structure of dimension $6g-6$, and its projection to $\cE_S$ is a smooth immersion. 
\end{theorem}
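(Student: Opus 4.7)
The plan is to follow the same template as the proof of Theorem \ref{tm:ideal}, with the edge length map replacing the dihedral angle map. I first introduce the ambient space $\cP_\Gamma$ of pairs $(\bE, \Sigma)$ where $\Sigma\subset \bE$ is an ideal polyhedral surface with $1$-skeleton $\Gamma$, with no constraint on edge lengths. Since $\dim \cE_S = 12g-12$ and each of the $V$ ideal vertices can be moved independently on the Riemann surface $\partial_\infty \bE$, the space $\cP_\Gamma$ has dimension $12g-12 + 2V$. Euler's formula for a triangulation gives $E = 3V + 6g-6$, so the codomain $\R^E/i(\R^V)$ of the length map $L : \cP_\Gamma \to \R^E/i(\R^V)$ has dimension $E-V = 2V+6g-6$, and the expected fiber dimension is $12g-12+2V-(2V+6g-6) = 6g-6$.

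Two infinitesimal statements are needed to conclude. For the analytic manifold structure on $\Pidlen = L^{-1}(l)$, I would show that $l$ is a regular value of $L$, equivalently that $dL$ is surjective at each point of the fiber. For the projection $\pi|_{\Pidlen}$ to $\cE_S$ to be a smooth immersion, I would establish the infinitesimal rigidity that a non-trivial infinitesimal deformation of $\Sigma$ inside a fixed hyperbolic end $\bE$ must change the edge lengths as an element of $\R^E/i(\R^V)$. These two properties are equivalent to the combined map $\Phi : \cP_\Gamma \to \cE_S \times \R^E/i(\R^V)$, $(\bE, \Sigma) \mapsto (\bE, L(\bE, \Sigma))$, being an analytic immersion transverse to each slice $\cE_S \times \{l\}$. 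The implicit function theorem then yields both the manifold structure of $\Pidlen$ and the immersion property of the projection to $\cE_S$.

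The main obstacle is the rigidity of $\Sigma$ inside a fixed $\bE$. I would attack it by an infinitesimal Cauchy-type argument adapted to ideal polyhedral surfaces. At each ideal vertex, the horocyclic link is a Euclidean polygon whose side lengths are determined, up to the horoball shift freedom, by the lengths of the edges of $\Sigma$ adjacent to that vertex. An edge-length-preserving infinitesimal deformation of the vertices of $\Sigma$ induces an infinitesimal isometry of this link polygon at every vertex; matching these local data across adjacent faces of the triangulation forces the deformation to propagate trivially around each cycle in $\Gamma$, hence to be globally trivial. Alternatively, and perhaps more cleanly, one can establish a Schl\"afli-type formula for ideal polyhedral surfaces in hyperbolic ends which pairs the differentials $dl$ and $d\theta$ non-degenerately, and thereby reduces the length rigidity here to the angle rigidity already established in the proof of Theorem \ref{tm:ideal}.
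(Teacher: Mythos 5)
Your high-level plan -- rigidity inside a fixed end plus a surjectivity argument, combined with the dimension bookkeeping $\dim\cP_\Gamma - \dim(\R^E/i(\R^V)) = (12g-12+2V)-(2V+6g-6) = 6g-6$ -- matches the paper's structure. But the two steps that carry the real weight are either wrong or missing. The ``Cauchy-type'' propagation argument you sketch for rigidity cannot work on a surface of genus $\geq 2$: propagating local infinitesimal rigidity around the triangulation and concluding triviality relies on simple connectivity, and nothing here rules out a nontrivial monodromy around a non-contractible cycle. (There is also a smaller slip: an edge-length-preserving deformation does \emph{not} induce an infinitesimal isometry of the horocyclic link polygon -- the side lengths of the link are controlled by the edge lengths, but its interior angles are the dihedral angles of $\Sigma$, which move.) The paper's rigidity proof (Lemma~\ref{lm:prorig-ideal*}) instead associates to the deformation an affine function $f_v$ on a horosphere at each vertex, uses its signs along the link to define a partial orientation of the edges of $\Gamma$, and shows via convexity that this is a ``tight decoration.'' The conclusion comes from Lemma~\ref{lm:pak}, a global Euler-characteristic count of orientation changes in the spirit of Pak and Bonsante--Wolf; this is a counting argument, not a propagation argument.

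The second, more structural, gap is that you have no mechanism for the regular-value condition. Rigidity alone does not give surjectivity of $dL$: rigidity says $dL$ restricted to the fixed-end directions $\ker(d\pi)$ is injective, but $\ker(d\pi)$ has dimension $2V$ while $\R^E/i(\R^V)$ has dimension $2V+6g-6$, so the $12g-12$-dimensional space of end deformations must account for the remaining $6g-6$ directions, and nothing in your proposal explains why it does. The paper handles this by introducing the operator $\psi$ on $(\R^E)_0$ whose kernel parameterizes first-order isometric deformations with varying end (via variations of dihedral angles subject to the Gauss--Bonnet and link-closing conditions at each vertex), and observing that $\psi$ is the adjoint of the fixed-end operator $\phi$; injectivity of $\phi$ (rigidity) then gives surjectivity of $\psi$ and $\dim\ker\psi = 6g-6$. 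That adjointness is exactly the form in which the Schl\"afli-type pairing you gesture at enters the proof. Finally, your alternative route -- reducing length rigidity to the angle rigidity ``already established'' in Theorem~\ref{tm:ideal} -- has the dependencies reversed relative to the paper: the manifold structure in Theorem~\ref{tm:ideal} is derived \emph{from} Theorem~\ref{tm:ideal*} via the complex structure on the space of ideal polyhedral surfaces (multiplication by $i$ exchanges the two families of deformations), and the angle rigidity within a fixed end is proved separately, by applying the Pak argument to the dual surface in $\bE^*$.
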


This statement applies a bit beyond convex ideal polyhedral surfaces. It still applies to a triangulated ideal polyhedral surface $\Sigma$ for which some edges of $\Gamma$ are ``strictly convex'', while others are diagonals of faces, so that the corresponding exterior dihedral angle is $0$. Those polyhedral surfaces are boundary points of the space of convex ideal polyhedral surfaces with 1-skeleton $\Gamma$. However they are interior points of a larger manifold of ideal polyhedral surfaces with 1-skeleton $\Gamma$ which are not necessarily convex, that is, for which the exterior dihedral angle at some of the edges might be negative. 

Theorem \ref{tm:ideal} and Theorem \ref{tm:ideal*} are quite closely related, and there is an argument, based on the complex structure on the space of ideal polyhedral surfaces in a hyperbolic end, that shows the equivalence between the two, see Section \ref{ssc:complex-ideal}.

However, Theorem \ref{tm:ideal*} is essentially local in nature, since deformations with prescribed edge lengths can lead to non-convex polyhedra, for which Theorem \ref{tm:ideal*} does not apply. Theorem \ref{tm:ideal-m} below, although closely related, has more global implications.

When $C\subset S$ is a finite subset, we denote by $\cT_{S,C}$ the Teichm\"uller space of hyperbolic metrics on $S$ with cusps at the points of a finite subset $C$. If $\Sigma\subset \bE$ is an ideal polyhedral surface in a hyperbolic end with vertices at the points of a finite subset $C\subset S$, then the induced metric on $\Sigma$ is a hyperbolic metric with cusps at the points of $C$. This motivates the following definition.

\begin{definition}
  Let $C\subset S$ be a set of $n$ points, and let $h$ be a complete hyperbolic metric on $S$ with cusps at the points of $C$. We denote by $\Pidmet$ the space of pairs $(\bE,\Sigma)$, where $\bE\in \cE_S$ is a hyperbolic end and $\Sigma$ is an ideal polyhedral surfaces in $\bE$ with induced metric isometric to $h$. 
\end{definition}

\begin{theorem} \label{tm:ideal-m}
  Let $C\subset S$ be a subset of cardinal $n\geq 1$, and let $h\in \cT_{S,C}$.  Then $\Pidmet$ admits a natural $C^{1,1}$ manifold structure of dimension $6g-6$, and its projection to $\cE_S$ is a smooth immersion. 
\end{theorem}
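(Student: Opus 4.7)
The plan is to cover $\Pidmet$ by analytic charts corresponding to different ideal triangulations of the cusped hyperbolic metric $(S,h)$, applying Theorem \ref{tm:ideal*} on each chart. Given a point $(\bE_0,\Sigma_0)\in \Pidmet$, I will first refine the 1-skeleton of $\Sigma_0$ to an ideal triangulation $\Gamma_0$ of $(S,h)$ by adding diagonals to each non-triangular face of $\Sigma_0$. The corresponding edge lengths $l_0\in\R^{E(\Gamma_0)}/i(\R^V)$ are then determined by $h$ (as lambda-length-style distances between horocycles), and $(\bE_0,\Sigma_0)$ lies in the space $\Pidlen$ of pairs with combinatorics $\Gamma_0$ and lengths $l_0$, the exterior dihedral angles along the added diagonals being zero. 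By Theorem \ref{tm:ideal*} this space is an analytic manifold of dimension $6g-6$ whose projection to $\cE_S$ is a smooth immersion.

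Second, a neighborhood of $(\bE_0,\Sigma_0)$ in $\Pidmet$ should be covered by finitely many such charts, one for each ideal triangulation of $(S,h)$ obtained from $\Gamma_0$ by a bounded sequence of diagonal flips. Indeed, a nearby $(\bE',\Sigma')\in\Pidmet$ may have combinatorics not contained in $\Gamma_0$: some diagonal $e$ of $\Gamma_0$ inactive in $\Sigma_0$ may be replaced in $\Sigma'$ by the other diagonal $e'$ of the surrounding ideal quadrilateral of $(S,h)$. Performing these flips produces a new triangulation $\Gamma$, and the new lengths $l$ are determined from $l_0$ through the Ptolemy-type identity for ideal hyperbolic quadrilaterals, which encodes the invariance of the induced metric $h$ under a flip.

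The main obstacle is to verify the $C^{1,1}$-regularity of the transition maps between overlapping charts. When two triangulations $\Gamma_0,\Gamma_1$ differ by a single flip $e_0\leftrightarrow e_1$, the chart overlap consists of ideal polyhedral surfaces for which the two faces adjacent to the flipped diagonal are coplanar, so both $e_0$ and $e_1$ carry zero exterior dihedral angle. On one side of this ``flip wall'' the diagonal $e_0$ carries a strictly positive dihedral angle while $e_1$ carries zero, and on the other side the roles are exchanged. I expect the transition map to be analytic on each side and continuous across the wall, but globally only $C^{1,1}$: the positive-side dihedral angle stays non-negative and vanishes on the wall, so its first derivative in any direction transverse to the wall vanishes there, while its second derivative has a jump. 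This mirrors the well-known $C^{1,1}$-regularity of hyperbolic Delaunay decompositions under deformation of cusped hyperbolic metrics. The projection $\Pidmet\to\cE_S$ is smooth and immersive on each chart by Theorem \ref{tm:ideal*} and agrees on overlaps, so it descends to a smooth immersion from the $C^{1,1}$ manifold $\Pidmet$ of dimension $6g-6$ into $\cE_S$.
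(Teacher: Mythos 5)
Your overall strategy matches the paper's: reduce Theorem~\ref{tm:ideal-m} to Theorem~\ref{tm:ideal*} by refining the $1$-skeleton to ideal triangulations of $(S,h)$, use Penner's result (i.e.\ the Ptolemy relation) to translate between edge-length data of flip-related triangulations, and glue the resulting analytic pieces to a $C^{1,1}$ manifold. This is precisely the reduction the paper sketches in Sections~\ref{ssc:ideal-dual} and~\ref{ssc:prorig-induced}, with the $C^{1,1}$ regularity traced back to Lemma~\ref{lm:C11}.

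However, your explanation of \emph{why} the gluing is $C^{1,1}$ is not correct. You argue that ``the positive-side dihedral angle stays non-negative and vanishes on the wall, so its first derivative in any direction transverse to the wall vanishes there.'' This inference is wrong on two counts. First, non-negativity on \emph{one} side of a hypersurface does not force the normal derivative to vanish there (think of $x\mapsto x$ on $[0,\infty)$). Second, within the chart $\Pidlen$ for $\Gamma_0$, the dihedral angle $\theta_{e_0}$ at the active diagonal is an analytic function whose zero set \emph{is} the flip wall; if its transverse derivative vanished along the wall, the wall would be a degenerate level set, which it is not. The quantity whose transverse derivative actually vanishes at the wall is the \emph{length of the inactive diagonal} $e_1$: inside the $\Gamma_0$-chart the induced metric, and hence the induced-metric distance between $v_2$ and $v_4$, is constant, the ambient length $l_{e_1}$ is always bounded above by that induced-metric distance, and equality holds precisely on the wall where the quadrilateral is planar. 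Thus $l_{e_1}$ restricted to $\Pidlen^{\Gamma_0,l_0}$ has an interior maximum along the wall, so $dl_{e_1}=0$ there; together with the analogous statement for $l_{e_0}$ on the $\Gamma_1$-chart this gives $T_p\Pidlen^{\Gamma_0,l_0}=T_p\Pidlen^{\Gamma_1,l_1}$ at every wall point $p$, i.e.\ the tangency needed for $C^1$-gluing. The failure to be $C^2$ is then a second-derivative jump of exactly the kind the paper computes in Lemma~\ref{lm:C11}. Finally, calling these ``transition maps between overlapping charts'' is slightly misleading: the two submanifolds $\Pidlen^{\Gamma_0,l_0}$ and $\Pidlen^{\Gamma_1,l_1}$ meet only along the codimension-one wall, not on an open set, so the correct framing is tangency and $C^{1,1}$-gluing of half-manifolds along a common boundary rather than a change-of-chart computation.
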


The lower regularity in this statement is a consequence of the fact that the induced metric on an ideal polyhedral surface is only $C^{1,1}$ as a function of the position of the vertices, it is smooth as long as the polyhedral surface is triangulated, but has lower regularity when the combinatorics changes.





\subsection{Compact and hyperideal polyhedral surfaces}

We also provide analogs of Theorems \ref{tm:ideal}, \ref{tm:ideal*} and \ref{tm:ideal-m} to two similar settings, one concerning ``compact'' polyhedral surfaces, the other ``hyperideal'' polyhedral surfaces. In both cases, the statements could be translated in terms of some kind of circle patterns, somewhat different from the Delaunay circle patterns appearing in Theorem \ref{tm:prorig-delaunay}: for hyperideal polyhedral surfaces, they are the ``hyperideal'' circle patterns appearing for instance in \cite{hcp,springborn-hcp,bobenko-lutz}, while for compact polyhedral surfaces, it would be the patterns of overlapping circles (see e.g. \cite{bowers:combinatorics} for a wide perspective on this topic). In both cases, we provide in fact two statements, one concerning the induced metrics on those surfaces, the other concerning the dihedral angles (for hyperideal polyhedral surfaces), or, for compact polyhedral surfaces, the ``dual metrics'', which is more directly related to the dihedral angles of ideal polyhedral surfaces.

\subsubsection{Compact polyhedral surfaces}

Compact polyhedral surfaces are defined as surfaces that look locally like the boundary of a compact polyhedron in $\HH^3$.

\begin{definition}[Compact polyhedral surfaces] \label{df:polycompact}
  Let $\bE\in \cE_S$ be a hyperbolic end. A {\em compact polyhedral surface} in $\bE$ is a surface $\Sigma\subset \bE$ which bounds a proper, compact, convex region including $\partial_0\bE$ and such that each $x\in \Sigma$ has a neighborhood $U\subset \bE$ such that $\Sigma\cap U$ corresponds to the intersection of the boundary of a compact polyhedron in $\HH^3$ with an open subset of $\HH^3$ isometric to $U$.
\end{definition}

Alternatively, a compact polyhedral surface can be defined as the convex hull in $\bE$ of a finite, non-empty set of points $V$, that is, the smallest geodesically convex subset of $E$ containing the points of $V$, under the condition that it is a proper convex subset. Some of those points will then be the vertices.

A compact polyhedral surface is equipped with its {\em induced metric}, which is a hyperbolic metric with cone singularities of angle less than $2\pi$ at the vertices. It also carries a ``dual metric'', which has the same relation to the exterior dihedral angles as the induced metric has to the edge lengths, see Section \ref{ssc:compact-dual}. The dual metric is a spherical metric with cone singularities of angle larger than $2\pi$, and with closed, contractible geodesics of length larger than $2\pi$, see Section \ref{ssc:compact-dual}.

\begin{definition}
  \begin{itemize}
  \item Let $h$ be a hyperbolic metric on $S$ with cone singularities of angle less than $2\pi$.  We denote by $\Pcomet$ the space of pairs $(\bE,\Sigma)$, where $\bE\in \cE_S$ is a hyperbolic end and $\Sigma\subset \bE$ is a compact polyhedral surface in $\bE$ with induced metric isotopic to $h$.
  \item Let $h^*$ be a spherical metric on $S$ with cone singularities of angle larger than $2\pi$, and closed, contractible geodesics of length larger than $2\pi$.  We denote by $\Pcodua$ the space of pairs $(\bE,\Sigma)$, where $\bE\in \cE_S$ is a hyperbolic end and $\Sigma\subset \bE$ is a compact polyhedral surface in $\bE$ with dual metric isotopic to $h^*$.
  \end{itemize}
\end{definition}

Both $\Pcomet$ and $\Pcodua$ are subset of the space of hyperbolic ends equipped with $n$ points in convex position (which correspond to the vertices). They are in this manner both equipped with a manifold structure. However, both are only $C^{1,1}$, because the function that associates to a set of points in convex position the induced metric on the boundary of their convex hull is only $C^{1,1}$, see Section \ref{ssc:compact-dual}.

\begin{theorem} \label{tm:compact}
  Let $h$ be a hyperbolic metric on $S$ with cone singularities of angle less than $2\pi$. Then $\Pcomet$ is a $C^{1,1}$ manifold of dimension $6g-6$, and the projection on the first factor from $\Pcomet$ to $\cE_S$ is an immersion which is Lagrangian for $\omega$.
\end{theorem}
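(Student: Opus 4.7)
The plan is to realize $\Pcomet$ as a level set of the induced-metric map from a larger manifold of ``vertex configurations'' in hyperbolic ends, and then to establish in turn the manifold structure, the immersion property, and the Lagrangian property.

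First, I would consider the space $\mathcal{V}_n$ of pairs $(\bE, V)$ where $\bE \in \cE_S$ is a hyperbolic end and $V \subset \bE$ is an (unordered) set of $n$ points in convex position, with $n$ equal to the number of cone points of $h$. This $\mathcal{V}_n$ is a manifold of dimension $12g-12+3n$: the factor $12g-12$ comes from $\cE_S$ viewed, via the identification with $\cC_S$, as a real manifold, and each of the $n$ vertices contributes $3$ real parameters. Taking the induced metric on $\partial \CH(V)$ defines a natural map $I$ from $\mathcal{V}_n$ to the $(6g-6+3n)$-dimensional space of isotopy classes of hyperbolic metrics on $S$ with $n$ cone singularities of angle less than $2\pi$ (the extra $n$ coming from allowing the cone angles to vary). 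The map $I$ is only $C^{1,1}$ because the edge set of $\partial \CH(V)$ jumps across walls where four vertices become coplanar, while inside each chamber of constant combinatorics $I$ is real-analytic, as already alluded to in the statement and discussed in Section \ref{ssc:compact-dual}.

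The central analytic step is to prove that $I$ is a submersion at every point of $\Pcomet = I^{-1}([h])$. This amounts to an infinitesimal rigidity statement: any first-order variation of $(\bE, V)$ inducing only a trivial first-order deformation of the cone metric on $\partial \CH(V)$ must be tangent to the orbit of ambient hyperbolic isometries of $\bE$. Inside a chamber of constant combinatorics, this should follow from a Cauchy-type infinitesimal rigidity for compact convex polyhedra in $\HH^3$, applied equivariantly to the universal cover, in the vein of Pogorelov, Alexandrov, and more recent developments. Granted this submersion property, the implicit function theorem yields the $C^{1,1}$ manifold structure on $\Pcomet$ with the expected dimension $(12g-12+3n)-(6g-6+3n) = 6g-6$. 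That the projection $p:\Pcomet \to \cE_S$ is an immersion then reduces to the statement that no nontrivial first-order deformation of $V$ alone, with $\bE$ held fixed, can preserve the induced metric; this is again a direct consequence of the same infinitesimal rigidity for compact convex hyperbolic polyhedra.

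The Lagrangian property is the most subtle ingredient, and my plan is to exhibit a locally defined generating function. Via the identification $\cE_S \cong \cC_S \cong T^*\cT_S$ underlying the symplectic form $\omega$, a $(6g-6)$-dimensional immersed submanifold whose projection to $\cT_S$ is a local diffeomorphism is Lagrangian if and only if it is locally the graph of $dF$ for some function $F$ on $\cT_S$. I would take $F$ to be a suitable renormalization of the hyperbolic volume of the region of $\bE$ enclosed between $\partial_0 \bE$ and $\Sigma$, corrected by a boundary term involving the fixed induced metric $h$, in the spirit of the Einstein--Hilbert--Gibbons--Hawking action appearing in the theory of renormalized volume of convex-cocompact hyperbolic 3-manifolds. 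A Schl\"afli-type variational identity, differentiating this volume while keeping $h$ fixed, should then express the pullback of the canonical one-form on $T^*\cT_S$ as $dF$, and thus force $p^*\omega = 0$. The main obstacle I expect is establishing the precise Schl\"afli formula in this setting of hyperbolic ends with a compact polyhedral inner boundary, and in verifying that the $C^{1,1}$ singularities coming from changes of combinatorics do not obstruct the identification.
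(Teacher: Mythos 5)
Your overall framing—realize $\Pcomet$ as a level set $I^{-1}([h])$ of the induced-metric map on the space of hyperbolic ends with $n$ marked points in convex position, prove maximal rank, deduce the $C^{1,1}$ manifold structure and the immersion, then obtain the Lagrangian property from a Schl\"afli-type variational formula for a corrected renormalized volume—is the same as the paper's, and your description of the $C^{1,1}$ regularity (jumps of combinatorics across walls) and of the volume functional (renormalized volume with a polyhedral boundary term, here $V_R(\Sigma_0)+V(\Sigma_0,\Sigma)-\tfrac12\sum_e l_e\theta_e$) matches what is done in Sections \ref{ssc:defo-compact} and \ref{sssc:comet}.

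The gap is in the passage from infinitesimal rigidity to the submersion property. What Cauchy--Pogorelov-type infinitesimal rigidity (here, Theorem \ref{tm:proj-general}, via Pak's orientation argument) actually gives you is that the operator recording first-order edge-length variations coming from vertex displacements \emph{in a fixed hyperbolic end} has trivial kernel: in the paper's notation, $\ker(\Phi)=0$. That is exactly the immersion property of the projection $\Pcomet\to\cE_S$. But it does not directly bound $\dim\ker(dI)$ from above by $6g-6$: rigidity in a fixed end only tells you that $\ker(dI)$ injects into $T_\bE\cE_S$, giving the useless bound $\leq 12g-12$. Also, your sentence ``any first-order variation of $(\bE,V)$ inducing only a trivial first-order deformation of the cone metric must be tangent to the orbit of ambient hyperbolic isometries of $\bE$'' is not the right statement: after quotienting by isotopy there is no such orbit, and if that claim were literally true the kernel would be zero-dimensional and $\Pcomet$ would be a point, not $(6g-6)$-dimensional. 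The missing ingredient is the parameterization of infinitesimal deformations with fixed induced metric \emph{but variable holonomy} by first-order variations of dihedral angles, subject at each vertex to the Schl\"afli-type constraint $\sum_i \dot\theta_i u_i=0$ on spherical polygons (Lemma \ref{lm:polygons}); this defines an operator $\Psi:\R^{E}\to\oplus_v T_v\bE$, and the crucial observation is that $\Psi$ is adjoint to $\Phi$. Then $\ker(\Phi)=0$ forces $\Psi$ to be \emph{surjective}, and the dimension count $\dim\ker(\Psi)=|E|-3|V|=6g-6$ follows. Without this duality (or some equivalent upper bound on $\dim\ker(dI)$), ``Cauchy rigidity applied equivariantly'' does not yield the submersion you need.

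On the Lagrangian side, note that you do not need the projection to $\cT_S$ to be a local diffeomorphism (indeed that is precisely the open part of the KMT conjecture): it suffices, as you say at the end, to show that the Liouville form $\lambda$ pulls back to an exact form $-d\VRcomet$ on $\Pcomet$, which directly gives $\omega|_{\Pcomet}=d\lambda|_{\Pcomet}=0$. So that part of your sketch is fine, conditional on establishing the Schl\"afli-type variational identity for $\VRcomet$, which you correctly flag as the technical point.
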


\begin{theorem} \label{tm:compact*}
  Let $h^*$ be a spherical metric on $S$ with cone singularities of angle larger than $2\pi$, and closed, contractible geodesics of length larger than $2\pi$. Then $\Pcodua$ is a $C^{1,1}$  manifold of dimension $6g-6$, and the projection on the first factor from $\Pcodua$ to $\cE_S$ is an immersion which is Lagrangian for $\omega$.
\end{theorem}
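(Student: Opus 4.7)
The plan is to parallel the strategy used for Theorem \ref{tm:compact}, replacing the induced hyperbolic metric by the dual spherical metric and invoking a dual Schl\"afli-type formula to obtain the Lagrangian property.

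First, I would set up the ambient space. Let $\Pcon$ denote the space of pairs $(\bE, V)$ with $\bE \in \cE_S$ a hyperbolic end and $V \subset \bE$ a set of $n$ labeled points in convex position, i.e., whose convex hull is a proper compact convex subset of $\bE$ bounded by a compact polyhedral surface in the sense of Definition \ref{df:polycompact}. Since $\cE_S$ has real dimension $12g-12$ and the $n$ vertex positions contribute $3n$ parameters, $\Pcon$ is a smooth manifold of dimension $12g-12+3n$. Assigning to $(\bE,V)$ the dual metric of the boundary of the convex hull of $V$ defines a $C^{1,1}$ map $\Psi^{*} : \Pcon \to \mathcal{M}^{*}$ into the moduli space $\mathcal{M}^{*}$ of spherical cone metrics of the type considered; parametrizing such a metric by its underlying point of Teichm\"uller space ($6g-6+2n$ dimensions) together with the $n$ cone-angle values gives $\dim \mathcal{M}^{*} = 6g-6+3n$, and $\Pcodua = (\Psi^{*})^{-1}(h^{*})$.

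Second, to see that $\Pcodua$ is a $C^{1,1}$ submanifold of dimension $6g-6$, I would verify that $\Psi^{*}$ is a $C^{1,1}$ submersion at every point of $\Pcodua$. This reduces to a Weyl-type infinitesimal rigidity statement: any tangent vector on which $d\Psi^{*}$ vanishes is tangent to the orbit of the isotopy action. Concretely, one translates an infinitesimal variation of the dual metric into a tuple of infinitesimal variations of exterior dihedral angles along edges and cone angles at vertices, and then invokes infinitesimal rigidity of convex polyhedral surfaces in a hyperbolic end with prescribed dual data (an Alexandrov-type statement for spherical cone metrics). The same rigidity yields the immersion property directly: a vector in $T\Pcodua$ projecting to $0$ in $T\cE_S$ is an infinitesimal deformation of $V$ inside a fixed end preserving the dual metric, hence must be trivial.

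Third, for the Lagrangian property, I would apply a Schl\"afli-type calculation. Writing $\theta_e$ for the exterior dihedral angle along an edge $e$, $L_e$ for its length, and $\kappa_v$ for the induced cone angle at a vertex $v$, the classical Schl\"afli identity gives $dV = \tfrac{1}{2}\sum_e L_e\, d\theta_e$ for the volume between the compact polyhedral surface $\Sigma$ and $\partial_0 \bE$. The natural global analogue on $\Pcon$ is a renormalised ``co-volume'' functional $W^{*}$ whose differential combines this interior contribution with a boundary term on $\partial_\infty \bE$; under the identification $\cE_S \cong T^{*}\cT_S$ recalled in Section \ref{ssc:symplectic}, this boundary term is a primitive of $\omega$. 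Fixing $h^{*}$ pins down every $\theta_e$ (as the length of the dual edge in $h^{*}$) and every $\kappa_v$ (via the area of the corresponding dual face), so the interior part of $dW^{*}$ vanishes along $\Pcodua$. Only the boundary part survives, and being exact its exterior derivative, the pullback of $\omega$, vanishes on $T\Pcodua$; a dimension count then identifies the image as Lagrangian. The main obstacle is precisely this last step: identifying the correct renormalised functional $W^{*}$ and verifying rigorously that its boundary term is a primitive of $\omega$, in the spirit of the Krasnov-Schlenker renormalised-volume framework but dualised, while handling the $C^{1,1}$ regularity issues that arise at configurations where the combinatorial type of $\Sigma$ jumps.
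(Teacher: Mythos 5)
Your outline agrees with the paper's general strategy (setup in $\Pcon$, rigidity for the immersion, a renormalised-volume/Schl\"afli argument for Lagrangianity), but the second step contains a genuine gap and omits the key mechanism that the paper uses to get the manifold structure.

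First, the claim that ``any tangent vector on which $d\Psi^{*}$ vanishes is tangent to the orbit of the isotopy action'' cannot be correct: $\Pcon$ and $\cE_S$ are already quotients by isotopy, so the isotopy orbit is trivial, and this statement would force $\ker d\Psi^{*}=\{0\}$, making $\Pcodua$ a discrete set. The correct assertion is that $\ker d\Psi^{*}$ has dimension exactly $6g-6$. Infinitesimal rigidity in a \emph{fixed} end (what the paper calls projective rigidity, Theorem~\ref{tm:proj-general}) is a statement about \emph{injectivity} of the linear map $\Phi:\oplus_{v}T_v\bE\to\R^{E}$ sending a first-order displacement of the vertices to the first-order variation of the edge lengths; it does not by itself tell you anything about the \emph{surjectivity} of the constraint map whose level set is $\Pcodua$, which is what submersivity requires. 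The mechanism you are missing is the adjoint relation in Section~\ref{sc:manifold}: the operator $\Psi:\R^{E}\to\oplus_{v}T_v\bE$ encoding the vertex conditions on the first-order variations of dihedral angles (Lemma~\ref{lm:polygons}) is the formal adjoint of $\Phi$. Hence $\ker\Phi=0$ forces $\Psi$ to be surjective, and by rank–nullity $\dim\ker\Psi=|E|-3n=6g-6$. Since $\ker\Psi$ is identified with the tangent space of the level set, this gives the manifold structure. Without the adjointness, your reduction from submersivity to rigidity is not justified.

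Second, your approach targets an abstract moduli space $\mathcal M^{*}$ of spherical cone metrics and asserts $\dim\mathcal M^{*}=6g-6+3n$ together with regularity. That is more than what is needed and is also not obviously true (moduli of spherical cone metrics are delicate); the paper sidesteps this entirely by realising the constraint locally as the vanishing of the first-order variations of the $|E|$ (dual) edge lengths, with $|E|=6g-6+3n$ computed from Euler characteristic, and works only with $\Phi,\Psi$. Third, you should be explicit that the whole argument is applied to the \emph{dual} surface $\Sigma^{*}\subset\bE^{*}$: its vertices and edges all lie in the de Sitter part and its edges are space-like, which is exactly the hypothesis needed for Theorem~\ref{tm:proj-general} and the $\Phi/\Psi$ formalism; fixing the dual metric of $\Sigma$ is the same as fixing the induced metric on $\Sigma^{*}$. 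Your Lagrangian step is morally the right idea (the paper's $\VRcodua=V_R(\Sigma_0)+V(\Sigma_0,\Sigma)$, whose differential equals $-\lambda$ along $\Pcodua$ by Schl\"afli), but as you acknowledge it is left open, so it does not close the argument either.
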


\subsubsection{Hyperideal polyhedral surfaces}

Similar results can be stated for hyperideal polyhedral surfaces. The definitions are in Section \ref{ssc:hyperideal}. The most natural (but perhaps not simplest) definition is based on the fact that a hyperbolic end $\bE$ can be naturally embedded in a larger {\em projective} manifold $\bar \bE$, which can be decomposed as
$$ \bar \bE = \bE\cup \partial_\infty \bE \cup \bE^*~, $$
where $\bE^*$ is equipped with a de Sitter metric which makes it into a globally hyperbolic maximal compact (GHMC) de Sitter space with the same holonomy and ideal boundary as $\bE$, see Section \ref{ssc:ends}. Hyperideal polyhedral surfaces can then be considered in $\bar \bE$ (equipped with its projective structure). 


\begin{definition} \label{df:polyhyperideal}
  Let $\bE\in \cE_S$ be a hyperbolic end. A {\em hyperideal polyhedral surface} in $\bE$ is the intersection with $\bE$ of a closed polyhedral surface in $\bar \bE$ with all vertices in $\bE^*$, and all edges intersecting $\bE$.  
\end{definition}

The {\em induced metric} on a hyperideal  polyhedral surface $\Sigma\in \bE$ is a complete metric on $S$ minus $|V|$ disks, where $V$ is the set of vertices of $\Sigma$, with ends of infinite area corresponding to each of the vertices. Here by {\em polyhedral surface} in $\bar\bE$ we mean again the boundary of a proper convex hull of $|V|$ points, now in $\bar\bE$. (Note that some definition allow for ideal ends in a hyperideal polyhedral surface. We will not do this here, so that the surfaces we consider might more adequately be called {\em strictly hyperideal} polyhedral surfaces.)

There is a notion of dual metric associated to a hyperideal polyhedral surface. It will however not be necessary here, since dual metrics of hyperideal polyhedral surfaces (or simply of hyperideal hyperbolic polyhedra) are uniquely determined by the dihedral angles, and conversely \cite{rousset1}. 

Hyperideal polyhedral surfaces have exterior dihedral angles which, considered as a function $\theta$ from the set of edges $E$ of $\Gamma$ to $(0,\pi)$, satisfy two conditions, see \cite{bao-bonahon}. (Note that the conditions here are slightly more restrictive than those of \cite{bao-bonahon} since we don't allow for ideal vertices.)
\begin{enumerate}
\item For every closed, contractible path $\gamma$ in $\Gamma^*$, $\sum_{e\in \gamma}\theta(e)>2\pi$,
\item for any path $\gamma$ in $\Gamma^*$ starting and ending at a face $f$ of $\Gamma^*$, and such that $\gamma$ is homotopic to a path in $f$, $\sum_{e\in \gamma}\theta(e)>\pi$.
\end{enumerate}
A graph $\Gamma$ equipped with a function $\theta:E\to (0,\pi)$ satisfying conditions (1) and (2) will be called an {\em admissible hyperideal weighted graph}. 

\begin{definition}
  \begin{itemize}
  \item Let $h$ be a complete hyperbolic metric on $S$ minus $|V|$ disks, with each end of infinite area.  We denote by $\Phymet$ the space of pairs $(\bE,\Sigma)$, where $\bE\in \cE_S$ is a hyperbolic end and $\Sigma\subset \bE$ is a hyperideal polyhedral surface in $\bE$ with induced metric isotopic to $h$.
  \item Let $(\Gamma, \theta)$ be an admissible hyperideal weighted graph in $S$. We denote by $\Phyang$ the space of pairs $(\bE,\Sigma)$, where $\bE\in \cE_S$ is a hyperbolic end and $\Sigma\subset \bE$ is a compact polyhedral surface in $\bE$ with combinatorics given by $\Gamma$ and exterior dihedral angles given by $\theta$.
  \end{itemize}
\end{definition}

\begin{theorem} \label{tm:hyperideal}
    Let $h$ be a complete hyperbolic metric on $S\setminus F$, with all ends of infinite area. Then $\Phymet$ is a $C^{1,1}$ manifold of dimension $6g-6$, and the projection on the first factor from $\Phymet$ to $\cE_S$ is an immersion which is Lagrangian for $\omega$.
\end{theorem}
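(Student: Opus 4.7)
The plan is to follow the three-step template used for Theorems~\ref{tm:ideal-m} and~\ref{tm:compact}, adapted to the hyperideal setting via the projective extension $\bar\bE = \bE \cup \partial_\infty\bE \cup \bE^*$. I fix an admissible combinatorics $\Gamma$ with $n = |F|$ vertices, and let $\mathcal{U}_\Gamma$ be the open subset of pairs $(\bE, V)$, where $V$ is an ordered $n$-tuple of points in $\bE^*$ whose convex hull in $\bar\bE$ cuts out a hyperideal polyhedral surface in $\bE$ with $1$-skeleton $\Gamma$. Then $\mathcal{U}_\Gamma$ fibers smoothly over $\cE_S$ with $3n$-dimensional fibers (each vertex moves in the three-dimensional de Sitter region), so $\dim \mathcal{U}_\Gamma = 12g-12+3n$, and the induced metric defines a smooth map $I: \mathcal{U}_\Gamma \to \cT^{\rm funnel}_{S\setminus F}$ to the Teichm\"uller space of complete hyperbolic metrics with funnel ends, of real dimension $6g-6+3n$.

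The central analytic step is infinitesimal rigidity: for $\bE$ fixed, the restriction of $dI$ to the vertical directions (infinitesimal motions of $V$ in $\bE^*$) is injective. This is the standard infinitesimal rigidity of hyperideal polyhedra with prescribed induced metric, transported from $\HH^3$ (as in \cite{rousset1}) to hyperbolic ends using that the convex hull operation is local and that $\bE$ is locally modeled on $\HH^3$. Combined with the dimension count $12g-12+3n - (6g-6+3n) = 6g-6$ and the local surjectivity of $I$ onto $\cT^{\rm funnel}_{S\setminus F}$, this forces $I$ to be a submersion near each point, so $\Phymet \cap \mathcal{U}_\Gamma = I^{-1}(h)$ is a smooth $(6g-6)$-dimensional submanifold and its projection to $\cE_S$ is an immersion (again by the vertical injectivity of $dI$). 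The pieces $\Phymet \cap \mathcal{U}_\Gamma$ glue into $\Phymet$, with $C^{1,1}$ regularity at the codimension-one strata where the combinatorics changes, by the same mechanism as in Theorems~\ref{tm:ideal-m} and~\ref{tm:compact}: the induced metric is $C^{1,1}$ but not in general $C^2$ in the vertex positions at such transitions.

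The Lagrangian property would follow from producing a locally defined function $W$ on $\Phymet$ whose differential equals the pullback of the canonical $1$-form $\alpha$ on $\cC_S \cong T^*\cT_S$ (under the identification $\cE_S \cong \cC_S$ of Section~\ref{ssc:complex-ends}), since then the pullback of $\omega = d\alpha$ to $\Phymet$ is $d(dW) = 0$. A natural candidate for $W$ is a regularized ``dual volume'' of the region in $\bE$ bounded by $\Sigma$ and $\partial_0\bE$, together with truncation contributions from the dual hyperbolic planes at the vertices, linked by a Schl\"afli-type identity to the variations of the truncated edge lengths and of the bending data on $\partial_0\bE$. Fixing the induced metric kills the edge-length contributions, leaving only the $\partial_0\bE$-contribution which is to be matched with $\alpha$. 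The main obstacle I anticipate is establishing this Schl\"afli-type identity in the hyperideal/hyperbolic-end setting: the vertices lie in $\bE^*$, the edges cross $\partial_\infty\bE$, and both $W$ and the truncated edge lengths require coherent regularization by the dual hyperbolic planes of the vertices, in a way compatible with the complex symplectic structure on $\cC_S$. Once this identity is in place, the Lagrangian conclusion follows in parallel with the dual-metric arguments used for Theorems~\ref{tm:compact} and~\ref{tm:compact*}.
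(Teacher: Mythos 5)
There are two genuine gaps in your plan for the manifold/immersion part, both at the point where the paper's argument does the real work.

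First, infinitesimal projective rigidity is not a local statement, so it cannot be ``transported from $\HH^3$ using that the convex hull operation is local.'' Rousset's rigidity in $\HH^3$ concerns closed hyperideal polyhedra (sphere topology); here the surface is a closed surface of genus $g\ge 2$ sitting equivariantly in a hyperbolic end, and the obstruction to rigidity lives in global topology. The paper proves the needed rigidity directly (Theorem~\ref{tm:proj-general}), via the sign-change/orientation count of Lemma~\ref{lm:pak}, which depends essentially on the Euler characteristic; locality plays no role. Your proposal as written has no valid rigidity input for hyperideal surfaces in ends.

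Second, and more seriously, you do not establish that the induced-metric map $I:\mathcal{U}_\Gamma\to\cT^{\rm funnel}_{S\setminus F}$ is a submersion. Vertical injectivity of $dI$ (i.e.\ rigidity) gives a lower bound of $3n$ on the rank, but the target has dimension $6g-6+3n$, so this does not suffice, and invoking ``local surjectivity of $I$'' as an extra hypothesis is precisely what needs to be proved (for fixed $\bE$ this is an open Weyl-type existence/uniqueness statement in hyperbolic ends, not a known input). The paper sidesteps this by not working with $I$ at all: it parametrizes first-order metric-preserving deformations by the angle variations $(\dot\theta_e)_{e\in E}$ subject to the vertex conditions (Lemma~\ref{lm:polygons}), packages these conditions into a linear operator $\Psi:\R^{E}\to\oplus_v T_v$, shows $\Psi$ is the adjoint of the operator $\Phi$ governing deformations inside a fixed end, deduces surjectivity of $\Psi$ from $\ker\Phi=0$ (projective rigidity), and then reads off $\dim\ker\Psi = |E|-3n = 6g-6$ by rank--nullity. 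Without something equivalent to this adjointness step, your dimension count does not close.

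Your sketch of the $C^{1,1}$ gluing and of the Lagrangian argument (truncated renormalized volume plus a Schl\"afli-type identity whose edge-length terms vanish at fixed metric, leaving $d\,W=-\lambda$) matches the paper's outline, and you correctly flag the Schl\"afli identity as the technical point; the paper resolves it by truncating at the dual planes of the de Sitter vertices (Section~\ref{ssc:hyperideal}) and then arguing exactly as in the compact case.
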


\begin{theorem} \label{tm:hyperideal*}
  Let $(\Gamma, \theta)$ be an admissible hyperideal weighted graph in $S$. Then $\Phyang$ is a smooth manifold of dimension $6g-6$, and the projection on the first factor from $\Phyang$ to $\cE_S$ is an immersion which is Lagrangian for $\omega$.
\end{theorem}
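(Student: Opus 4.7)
The plan is to mirror the proof of Theorem \ref{tm:ideal}, splitting the statement into three steps: (i) infinitesimal rigidity of a hyperideal polyhedral surface in a fixed hyperbolic end, which yields the immersion property of $\rho:\Phyang\to\cE_S$; (ii) a dimension count giving $\dim \Phyang = 6g-6$ and the analytic manifold structure; and (iii) the Lagrangian property via a Schl\"afli-type identity.

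First I would prove infinitesimal rigidity. Lifting $\Sigma$ to the universal cover $\tilde\bE \subset \HH^3$, one obtains a $\pi_1(S)$-invariant collection of hyperideal hyperbolic polyhedra glued along their shared faces. The classical infinitesimal rigidity of a single hyperideal polyhedron with prescribed combinatorics and exterior dihedral angles, proved by Bao--Bonahon and Rousset via the dual de Sitter picture, provides local rigidity at each vertex. Gluing these statements along common faces identifies an infinitesimal deformation (with $\bE$ and $\theta$ fixed) with a cocycle in $Z^1(\pi_1(S), \mathfrak{isom}(\HH^3))$; since the holonomy of $\bE$ is fixed this class is a coboundary, and since the centralizer of a closed surface group of genus $\geq 2$ in $\Isom(\HH^3)$ is trivial, the deformation itself vanishes. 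This gives the immersion.

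Second, the dimension count. The space of pairs $(\bE,(v_1,\dots,v_n))$ with $\bE\in \cE_S$ and $v_i$ in the de Sitter cap $\bE^*$ is smooth of real dimension $12g-12+3n$. Requiring the convex hull in $\bar\bE$ to have $1$-skeleton exactly $\Gamma$ is open when $\Gamma$ is a triangulation, and otherwise cuts a submanifold of codimension $2E-3F$ coming from face-planarity (for each face of degree $k$, the requirement that its $k$ hyperideal vertices lie on a common totally geodesic plane is $k-3$ equations). Fixing the $E$ exterior dihedral angles provides $E$ more equations, transverse by Step~1, and Euler's formula $n-E+F=2-2g$ collapses the count to
$$ 12g-12 + 3n - (2E-3F) - E \;=\; 6g-6. $$
Analyticity of $\Phyang$ follows from analytic dependence of angles and end-data on vertex positions.

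For the Lagrangian property, I would use a Schl\"afli-type identity combined with the Krasnov--Schlenker interpretation of the renormalized W-volume of $\bE$ as a primitive of $\omega$ on $\cE_S$. On the total space of pairs $(\bE,\Sigma)$ with $1$-skeleton $\Gamma$ and variable $\theta$, one aims to exhibit a function $\Phi$, built from the W-volume of $\bE$ plus the hyperbolic volume of the region truncated by $\Sigma$ and by the polar planes at the hyperideal vertices, whose differential satisfies $d\Phi = \rho^*\alpha + \tfrac12\sum_{e} l_e\, d\theta_e$, where $\alpha$ is a primitive of $\omega$ on $\cE_S$. Restricting to $\Phyang$ kills the Schl\"afli term, so $\rho^*\alpha = d\Phi$ is exact there and $\rho^*\omega = 0$. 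The main obstacle lies in this last step: one must reconcile the renormalization at $\partial_\infty\bE$ with the polar-plane truncations at each hyperideal vertex, and show that the boundary contribution at the pleated surface $\partial_0\bE$ in the variation of the truncated volume combines cleanly with the W-volume derivative to reproduce exactly the Krasnov--Schlenker primitive.
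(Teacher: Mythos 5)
Your Step (i) contains a genuine gap that the rest of the proposal does not repair. You invoke Bao--Bonahon/Rousset rigidity of a single compact hyperideal polyhedron in $\HH^3$ and then try to promote it to rigidity of an equivariant polyhedral surface of genus $g\geq 2$ by ``gluing along shared faces'' together with a cohomological argument. But a hyperideal polyhedral surface in $\bar\bE$, or its lift $\tilde\Sigma\subset\HH^3\cup dS^3$, is a $2$-complex, not a union of convex $3$-polyhedra glued along $2$-faces; there are no pieces to which Bao--Bonahon's global rigidity applies. Even setting that aside, infinitesimal rigidity of individual pieces does not entail infinitesimal rigidity of a glued object: the entire content of such rigidity is that the restriction of a global first-order deformation to each piece is a Killing field, and this is what must be proved, not assumed. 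Your cocycle in $Z^1(\pi_1S,\mathfrak{isom}(\HH^3))$ can only be formed \emph{after} one knows the deformation is piecewise-Killing, so the centralizer argument begs the question. The paper's proof of projective rigidity (Theorem~\ref{tm:proj-general} via the combinatorial Lemma~\ref{lm:pak}) is instead applied directly to the dual surface $\Sigma^*\subset\bE^*$: since $\theta$ is uniquely tied to the dual metric of a hyperideal surface \cite{rousset1}, fixing dihedral angles fixes the edge lengths of $\Sigma^*$, and the signs of the first-order length variations orient the edges in a ``tight'' decoration, which must be trivial on a genus-$\geq 2$ triangulated surface. This argument genuinely uses $g\geq 2$ and does not reduce to the closed-polyhedron case.

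Step (ii) then inherits the gap: your constraint count $12g-12+3n-(2E-3F)-E=6g-6$ is arithmetically correct, but ``transverse by Step~1'' is exactly the missing ingredient. The paper instead exhibits the tangent space as $\ker(\Psi)$ and shows $\Psi$ is surjective because it is adjoint to an operator $\Phi$ with trivial kernel (that kernel being zero is the projective rigidity statement); this is what makes the constraints transverse. For Step (iii) you correctly sketch the Schl\"afli-type variational approach with a truncated renormalized volume, and you honestly flag the reconciliation between polar-plane truncation and renormalization at $\partial_\infty\bE$ as an unresolved obstacle; the paper handles it by defining $V_R(\Sigma_0)$ via Epstein surfaces and adding the hyperbolic volume of the truncated region, so that the Schl\"afli term $\tfrac12\sum_e l_e\,d\theta_e$ dies on $\Phyang$ and $dV_R=-\lambda$. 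So your outline of (iii) is compatible with the paper's route but incomplete.
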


\subsection{A wider perspective}
\label{ssc:perspective}

The questions considered here can be put in the framework of the Weyl problem, and the dual Weyl problem, for locally convex immersed surfaces in $\HH^3$. This point of view is \cite[Section 5]{weylsurvey}. We do not elaborate more on this point of view here.

\subsection{Acknowledgements}

I would like to thank Wayne (Wai Yeung) Lam for several interesting conversations related to the content of this paper.

\section{Arguments}

The arguments used here are significantly simpler when considering compact polyhedral surfaces, or possibly hyperideal polyhedral surfaces. We first describe the arguments for compact polyhedral surfaces, which is heuristically a bit simpler, then for hyperideal surfaces, and lastly for ideal surfaces. 

\subsection{Compact polyhedral surfaces}

To prove that the space of hyperbolic ends containing a compact polyhedral surface with prescribed induced metric is a $C^{1,1}$ Lagrangian submanifold of $\cE_S$ of dimension $6g-6$, (Theorem \ref{tm:compact}), we use the following steps.
\begin{enumerate}
\item The statement can be reduced to the corresponding statement for hyperbolic ends containing a compact, {\em triangulated} surface with prescribed edge lengths. Indeed when two or more of the faces become coplanar, the linear conditions satisfied by the end and position of the vertices are parallel, so that the corresponding submanifolds can be ``glued'' to obtain a $C^{1,1}$ submanifold.
\item We apply a rigidity argument invented by I. Pak \cite{pak:short} for Euclidean polyhedra to show that polyehdral surfaces cannot be deformed (preserving either the induced metric or the dual metric) in a fixed hyperbolic end. (The argument presented here is somewhat extended and adapted, to allow for polyhedra with faces of degree larger than $3$.)
\item We introduce a description of the infinitesimal deformations fixing the induced metric but varying the holonomomy (and therefore the hyperbolic end), in terms of variations of dihedral angles. We show this description is {\em dual} to that of deformations in a fixed hyperbolic end, in the sense that the two corresponding linear operators are {\em adjoint}. It therefore follows from point (2) that the deformation space when varying the holonomy has maximum rank, that is, $6g-6$. Therefore, the space of realizations (without fixing the holonomy) is a manifold of dimension $6g-6$.
\item It then follows from point (2) that the projection of this space to the space of hyperbolic ends is an immersion.
\item The proof that the submanifold obtained in this way is Lagrangian uses a different argument, based on a the first-order variation of a variant of the renormalized volume adapted to this setting. This variational formula can be interpreted as the fact that, on the submanifold of hyperbolic ends obtained, the differential of this ``volume'' is equal to the Liouville form $\lambda$ of $(\cE_S,\omega)$. It then follows that $\omega=d\lambda$ is zero on the submanifold. (This part is done in Section \ref{sc:lagrangian}.) 
\end{enumerate}

The proof of Theorem \ref{tm:compact*}, concerning the dual metrics of compact polyhedral surfaces, is quite similar. The main difference is that the proof takes place not on the compact polyhedral surface $\Sigma$, but on the dual surface $\Sigma^*$, which is contained in the de Sitter part $\bE^*$ of the hyperbolic end $\bar \bE$.

\subsection{Hyperideal hyperbolic surfaces}

The proof of Theorem \ref{tm:hyperideal} is again very similar, with the difference that the vertices of the hyperideal polyhedral surface $\Sigma$ are now contained in the de Sitter component $\bE^*$.

Theorem \ref{tm:hyperideal*} is again proved along the same lines, using the dual surface $\Sigma^*$, see Section \ref{ssc:hyperideal}.

\subsection{Ideal polyhedral surfaces}
\label{ssc:ideal-dual}

For Delaunay circle patterns, or equivalently ideal polyhedral surfaces in hyperbolic ends, the proofs have to be somewhat modified. For edge lengths, or induced metrics, the proof needs to be adapted because of several differences with, say, compact polyhedral surfaces. For dihedral angles, a different type of argument needs to be used, since (apparently) the scheme used for the other cases does not work directly.

Some of the main differences with compact or hyperideal polyhedral surfaces are as follow. 

\begin{itemize}
\item All edges have infinite length. However, if one chooses a horosphere $H_v$ centered at each vertex $v$ of $\Sigma$, then the length of an edge can be defined as the (oriented) length of the segment between the intersection with the horospheres centered at its two endpoints.
\item Those edge length are then well-defined up to the addition of a constant to all edges adjacent to a given vertex, because any two horospheres centered at the same vertex are at constant distance.
\item Dually, the angles between the faces of the dual surface $\Sigma^*$ are not well-defined, because those faces are isotropic in $dS^3$. However, one can choose for each face $v^*$ of $\Sigma^*$ a horosphere $H^*_v$ centered at $v$ in $dS^3$. Since two horospheres make up a constant angle, the angle between the horosphere corresponding to two advance faces of $\Sigma^*$ is well-defined.
\item Those horospheres in $dS^3$ are dual to the horospheres in $\HH^3$ centered at $v$. The angle between two horospheres $H^*_v$ and $H^*_{v'}$ is equal to the length of the segment between $H_v$ and $H_{v'}$, the horospheres in $\HH^3$ dual to  $H^*_v$ and $H^*_{v'}$.
\item Therefore, the angle between horospheres in $dS^3$ are well-defined up to changing the choice of horosphere at $v$, which adds a constant to all the angles of $H^*_v$.
\end{itemize}

We first describe the proof of Theorem \ref{tm:ideal-m}, on ideal polyhedral surfaces having a prescribed induced metric. 

\begin{enumerate}
\item As above, Theorem \ref{tm:ideal-m} on the induced metrics of ideal polyhedral surfaces can be reduced to Theorem \ref{tm:ideal*} on the edge lengths of those surfaces.
\item A first-order deformation of an horosphere $H_v$, centered at an ideal point $v$, is uniquely determined by an affine function $u$ on $H_v$. Given two horospheres $H_v$ and $H'_{v'}$ centered at  ideal points $v$ and $v'$, and a geodesic $e$ with endpoints $v$ and $v'$, the first-order variation of the length of the segment of $e$ between $H_v$ and $H'_{v'}$ is the sum of the evaluations of $u$ and $u'$ at intersections of $e$ with $H_v$ and $H'_{v'}$. 
\item Let $\Sigma$ be an ideal polyhedral surface in a hyperbolic end $\bE$, and let $\Sigma'$ be a first-order deformation of $\Sigma$ preserving the edge lengths of $\Sigma$ in $\R^E/i(\R^V)$. One can then choose first-order deformations of the horosphere foliations associated to the vertices of $\Sigma$ so that the first-order variation of the edge lengths, defined in $\R^E$ (rather than in $\R^E/i(\R^V)$) are zero.
\item The condition that the edge lengths are preserved means that the affine functions associated to the horospheres at the two vertices sum to zero. This provides an orientation to each edge (unless the functions are zero at both edges). The argument by Pak mentioned above can then be applied, to show that there is no non-trivial deformation of $\Sigma$, within a fixed edge $\bE$, which fixes all edge lengths, as an element of $\R^E/i(\R^V)$.  
\item For each vertex $v$ of $\Sigma$, the horospheres at $v$ can be identified through the normal flow. We denote by $\cH_v$ the vector space associated with any of those horospheres (with $\cH$ also the vector space associated to the other horospheres, through the normal flow). We denote by $\cH^*$ the dual vector space, of parallel 1-forms on any of the horospheres centered at $v$. 
\item A first-order deformation of $\Sigma$ within a fixed hyperbolic end $E$ is an element of the kernel of a linear operator (denote by $\phi$ in Section \ref{ssc:defo-ideal}), which assigns to an element of $\oplus_{v\in V}\cH^*_v$ (a 1-form for each vertex) an element of $\R^E/i(\R^V)$ (a weight assignment to the edges, up to the choice of an additive constant for each vertex.
\item First-order isometric deformations of $\Sigma$ in a {\em variable} hyperbolic end can be parameterized by the first-order variations of the dihedral angles, subject at each vertex $v$ to two linear conditions. Those linear conditions define an operator, which is the adjoint of the linear operator describing the first-order deformations in a fixed hyperbolic end. Its co-image is therefore zero, and this implies that the dimension of its kernel is always $6g-6$. It follows that $\Pidmet$ is a manifold of dimension $6g-6$. The fact that its projection to $\cE_S$ is an immersion then follows from projective rigidity, that is, rigidity within a fixed hyperbolic end. 
\end{enumerate}

For ideal polyhedral surfaces with prescribed dihedral angles, the proof of projective rigidity -- rigidity within a fixed hyperbolic end -- follows the same line, based on the argument of Pak \cite{pak:short} mentioned above, as for compact polyhedral surfaces, applied to the dual surfaces in $\cE^*$.

However, the proof of Theorem \ref{tm:ideal} that we use follows a different route -- using the same  argument as for compact polyhedral surfaces might be possible, but some care is needed since the dihedral angles of the dual surface are well-defined only up to the addition of a constant for each face. However there are two (other) possible ways to show that $\Pidang$ is a manifold of dimension $6g-6$.
\begin{itemize}
\item Once projective rigidity is knowns, it follows from a recent result of Wayne Lam \cite[Cor. 1.4]{lam:pullback}.
\item One can use the similar results for ideal polyhedral surfaces with prescribed induced metric, together with the complex structure on the space of ideal polyhedral surfaces (see Section \ref{ssc:complex-ideal}) to obtain the result.
\end{itemize}

\section{Background material}
\label{sc:background}

We provide in this section background information that is useful throughout the paper. Most of the material here is standard. The point of view chosen in Section \ref{ssc:ends} on hyperbolic ends perhaps adds a little novelty.

\subsection{Complex projective structures on surfaces}
\label{ssc:cp1}

A complex projective structure on $S$ is a geometric structure with charts in $\CP^1$ and transition maps in $PSL(2,\C)$. We denote by $\cC_S$ the space of complex projective structures on $S$. We refer to \cite{dumas-survey} for a broad presentation of complex projective structures on surfaces. We will use, among other, the following properties.
\begin{enumerate}
\item Given a $\CP^1$-structure on $S$, it has a developing map $\rm{dev}_\sigma:\tilde S\to \CP^1$, well-defined up to left composition by an element of $\PSL(2, \C)$, and a holonomy representation $\rho:\pi_S\to PSL(2,\C)$, such that, for all $x\in \tilde S$ and all $\gamma\in \pi_1S$,
  $$ \rm{dev}_\sigma(\gamma.x)=\rho(\gamma)\rm{dev}_\sigma(x)~. $$
\item $\cC_S$ is a complex manifold of dimension $6g-6$, where $g$ is the genus of $S$. It is homeomorphic to a ball.
\item Each complex projective structure $\sigma \in \cC_S$ has an underlying complex structure, and this defines a projection map $\pi:\cC_S\to \cT_S$.
\item Given $c\in \cT_S$, there is a unique Fuchsian complex projective structure with underlying complex structure $c$ -- we will denote it by $\sigma_F(c)$.  
\item The projection $\pi$ in facts identifies $\cC_S$ as the total space of an affine bundle over $\cT_S$, with fiber over $c$ modelled on the vector space of holomorphic quadratic differentials on $(S,c)$. Given $\sigma, \sigma'\in \pi^{-1}(c)$, for $c\in \cT_S$, the Schwarzian derivative $\cS(\sigma, \sigma')$ of the identity map from $(S,\sigma)$ to $(S,\sigma')$ is a holomorphic quadratic differential on $(S,c)$, and the map from $\sigma\to \cS(\sigma_F(c),\sigma)$ identifies $\pi^{-1}(c)$ with $T^*_c\cT_S$, the space of holomorphic quadratic differentials on $(S,c)$. 
\end{enumerate}

\subsection{The symplectic structure $\omega$ on $\cC_S$}
\label{ssc:symplectic}

The identification above between $\cC_S$ and $T^*\cT_S$ can be used to pull back on $\cC_S$ the cotangent symplectic structure on $T^*\cT_S$. We denote by $\omega$ this pull-back symplectic structure on $\cC_S$.

\subsection{Circle patterns on surfaces with complex projective structures}
\label{ssc:delaunay}

There is a notion of circles on $\CP^1$ -- circles are the ideal boundaries in $\partial_\infty \HH^3$ of totally geodesic planes in $\HH^3$, and $\CP^1$ can be identified with $\partial_\infty \HH^3$.

Elements of $PSL(2,\C)$ act by isometries on $\HH^3$. This action extends by continuity to an action on $\CP^1$, by {\em M\"obius transformations}. Since isometries of $\HH^3$ send totally geodesic planes to totally geodesic planes, M\"obius transformations of $\CP^1$ send circles to circles.

There is therefore a well-defined notion of circles on $S$ equipped with a $\CP^1$-structure. Moreover, there is a well-defined notion of disk. (Note however that not all circles bounds a disk in this context.)

\subsection{Circle packings as Delaunay circle patterns}
\label{ssc:special}

Circles packings can be considered as Delaunay circle patterns, thanks to a ``trick'' due to Thurston. Indeed, let $C$ be a circle packing on $(S,\sigma)$, with nerve $\Gamma$ -- that is, the vertices of $\Gamma$ correspond to the disks bounded by the circles in $C$, and two vertices are connected by an edge if the corresponding disks are tangent. Assume that $\Gamma$ is the 1-skeleton of a triangulation. Each interstice in $C$ -- that is, each connected component of the complement of the union of the closed disks -- is bounded by segments of three circles $C_i, C_j$ and $C_k$, which are pairwise tangent. There is therefore another another circle, say $C^*_{ijk}$, which is orthogonal to $C_i, C_j$ and $C_k$, and intersects them at their tangency points. This construction, when done for all interstices, defines another circle packing $C^*$, with nerve the graph $\Gamma^*$ dual to $\Gamma$.

Taking all circles of $C$ and of $C^*$ leads to a Delaunay circle pattern $\bar C$, with incidence graph $\bar\Gamma$, where $\bar\Gamma$ is a bipartite graph having one vertex for each face and each of $\Gamma$, and an edge between the vertex corresponding to a vertex $v$ and to a face $f$ of $\Gamma$ if and only if $v$ is a vertex of $f$. The intersection angle between the corresponding circles of $\bar C$ is then always $\pi/2$, by construction.

Conversely, given a Delaunay circle pattern $\bar C$ with bipartite incidence graph $\bar\Gamma$, and with all intersection angles $\pi/2$, the circles of $\bar C$ can be split into two circle packings $C$ and $C^*$, so that $\bar C$ is obtained from  $C$ (or from $C^*$) by the construction above.

\subsection{Complex projective structures and hyperbolic ends}
\label{ssc:complex-ends}

There is a one-to-one correspondence (presumably due to Thurston) between complex projective structures on $S$ and hyperbolic end structures on $S\times [0,\infty)$. This relation is easier to visualize for complex projective structures $\sigma$ for which the developing map $\dev_\sigma$ is injective. Denoting by $\Omega=\dev_\sigma(\tilde S)$, the holonomy representation $\rho:\CP^1\to \CP^1$, extends as an action of $\pi_1S$ on $\HH^3$ by isometries, and this action is properly discontinuous on
$$ \tilde \bE = \HH^3\setminus CH(\CP^1\setminus \Omega)~, $$
where $CH$ denotes the hyperbolic convex hull. The quotient of $\tilde \bE$ by $\rho(\pi_1S)$ is precisely the interior of the hyperbolic end $\bE$ associated to $\sigma$.

Conversely, given the hyperbolic end $\bE$, its lift in $\HH^3$ is a domain with boundary at infinity $\Omega$, and then $\partial_\infty \bE$ can be identified with $\Omega/\rho(\pi_1S)$. Since the action of $\rho$ is by elements of $PSL(2,\C)$, $\partial_\infty\bE$ is equipped with a $\CP^1$-structure $\sigma$.

When the developing map $\dev_\sigma$ is not injective, the construction of the hyperbolic end associated to $\sigma$ is similar, but slightly more abstract. It can be done by considering, for each maximal open disk in $\tilde S$ (equipped with the $\CP^1$-structure $\tilde\sigma$ lifted from $\sigma$) an open hyperbolic half-space, with identifications between two half-spaces when the corresponding disks intersect. This defines a non-complete hyperbolic manifold, with boundary at infinity $(\tilde S,\tilde\sigma)$, on which $\pi_1S$ acts naturally, with quotient the interior of the hyperbolic end associated to $\sigma$. (A detailed construction can be found in \cite{kulkarni-pinkall}.)

\subsection{Hyperbolic ends and their projective extension}
\label{ssc:ends}

We have seen in Section \ref{ssc:complex-ends} how hyperbolic ends homeomorphic to $S\times (0,\infty)$ are in one-to-one correspondence with complex projective structures on $S$.

There is however another, related correspondence: between complex projective structures on $S$ and globally hyperbolic maximal compact (GHMC) 3-dimensional de Sitter spacetimes, as described in \cite{scannell}.

This correspondence is, again, easier to describe for a $\CP^1$-structures $\sigma$ on $S$ such that $\dev_\sigma$ is injective, with image $\Omega\subset \CP^1$. It can be visualized using the projective model of $\HH^3$ inside the unit ball $B(1)\subset \R^3$, normalized so that $\Omega$ is contained in the open upper hemisphere $S^2_+$. One can then consider the domain $\tilde{\bE^*}$ in $\R^3\setminus B(1)$ which is the intersection of all half-spaces containing $\Omega$. This domain is naturally equipped with a de Sitter metric, because the exterior of $B(1)$ can be naturally identified with a projective model of one hemisphere of the de Sitter space $dS^3$, see e.g. \cite[Section 2]{scannell}. Moreover $\pi_1S$ acts properly discontinuously and isometrically on $\tilde{\bE^*}$, with quotient a GHMC de Sitter spacetime which we call $E^*$. 

In the general case where the developing map of $\sigma$ is not injective, the construction of the associated GHMC de Sitter spacetime can still be done, somewhat as for the hyperbolic end associated to $\sigma$, see \cite{scannell}.

The hyperbolic end and GHMC dS spacetime can be constructed together. Again, the construction is easier to visualize when $\dev_\sigma$ is injective, with image $\Omega\subset \CP^1$. With the notations used above, $\rho$ acts properly discontinuously and projectively on $\tilde\bE\cup \Omega\cup \tilde\bE^*$. Since the action of $\rho$ is projective in the projective models of $\HH^3$ and $dS^3$, $\bar\bE$ is equipped with a real projective structure. This space can also be equipped naturally with a complex-valued function, defined from the Hilbert metric of the sphere (suitably complexified), as in \cite{shu}. The hyperbolic and de Sitter metrics on the two sides of $\Omega$ can be recovered in this manner. The quotient of $\tilde\bE\cup \Omega\cup \tilde\bE^*$ by $\rho(\pi_S1)$ is then a composite space,
$$ \bar \bE = \bE\cup \partial_\infty E \cup \bE^*~. $$

There is a well-known duality between the hyperbolic and de Sitter spaces, see e.g. \cite{HR,shu,fillastre-seppi:spherical}. This duality associates to each oriented totally geodesic plane in $\HH^3$ a point in $dS^3$, and to each space-like totally geodesic plane in $dS^3$ a point in $\HH^3$. It can be defined using the models of both $\HH^3$ and $dS^3$ as quadrics in the 4-dimensional Minkowski space $\R^{3,1}$, or in purely projective terms.

Through this duality, points in $\bE^*$ are dual to the totally geodesic planes immersed in $\bE$, while points in $\bE$ are dual to maximal open domains in space-like totally geodesic planes immersed in $\bE^*$. This can be seen using the projective definition duality, in the universal cover of $\bar\bE$, identified as above to $\tilde\bE\cup \Omega\cup \tilde\bE^*$.

This duality gives rise to a duality between convex surfaces in $\bar \bE$, defined in the following manner.

\begin{definition}
  A surface $\Sigma\subset \bar \bE$ is {\em convex} if it bounds a convex domain in $\bar \bE$ containing $\partial_0\bE$. 
\end{definition}

The convexity of the domain is understood in terms of the real projective structure on $\bar\bE$.

Given a convex surface $\Sigma\subset \bE$, the dual surface $\Sigma^*$ is defined as the set of points dual to the support planes of $\Sigma$. One can show that $\Sigma^*$ is again a convex surface in $\bE$. Moreover, the dual surface of $\Sigma^*$ is then $\Sigma$, as for surfaces in $\HH^3$ and $dS^3$. (It follows from the same arguments, see e.g. \cite{HR,shu}.)

A key property of this duality is that if two totally geodesic planes in $\bE$, oriented towards $\partial_\infty \bE$, intersect with angle $\theta$ (measured relative to their orientation, that is, with angle zero if they are coplanar with the same orientation) then the dual points are at space-like distance $\theta$ in $\bE^*$. As a consequence, if $\Sigma$ is a convex polyhedra surface in $\bE$, and if $\Sigma^*$ is the dual surface, then the edge lengths of $\Sigma^*$ are equal to the exterior dihedral angles of the correspondinig edges of $\Sigma$, and conversely.

This duality takes different forms for different types of polyhedral surfaces. For instance:
\begin{itemize}
\item If $\Sigma$ is a (convex) compact polyhedral surface in $\bE$, $\Sigma^*$ is a space-like convex surface in $\bE^*$, and conversely.
\item If $\Sigma$ is an ideal polyhedral surface, the planes dual to each vertex $v\in \partial_\infty\bE$ is the isotropic plane through $v$ in $\bE^*$, while the dual of each edge is a space-like edge in $\bE^*$. So $\Sigma^*$ is a polyhedral surface with all faces isotropic, but all edges space-like.  
\item If $\Sigma$ is a hyperideal surface -- that is, its vertices are in $\bE^*$ but all its edges intersect $\bE$ -- then $\Sigma^*$ has all edges space-like, but all its faces are time-like in $\bE^*$, and contain a region which is in $\bE$.
\end{itemize}

\subsection{Delaunay circle patterns and ideal polyhedral surfaces}
\label{ssc:ideal}


As was already mentioned, there is a one-to-one relation between Delaunay circle patterns in a surface equipped with a complex projective structure, and ideal polyhedral surfaces in the corresponding hyperbolic end. The relations should be clear from the following lemma.

\begin{lemma} \label{lm:ideal}
  Let $\bE\in \cE_S$ be a hyperbolic end, and let $\Sigma\subset \bE$ be an ideal polyhedral surface in $\bE$. Let $\sigma\in \cC_S$ be the corresponding complex projective structure on $S=\partial_\infty \bE$. The set of disks defined as the images by the Gauss map of the planes extending the faces of $\Sigma$ constitute a Delaunay circle pattern in $(S,\sigma)$, and the dihedral angles between the disks are equal to the exterior dihedral angles between the faces of $\Sigma$.
\end{lemma}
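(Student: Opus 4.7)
The plan is to lift to the universal cover of $\bE$ and use the classical correspondence between oriented totally geodesic planes in $\HH^3$ and open round disks in $\partial_\infty \HH^3 = \CP^1$, then separately verify each of the three Delaunay axioms.

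First I would take the lift $\tilde\bE \subset \HH^3$, along with the lifted surface $\tilde\Sigma = \partial \tilde K$, where $\tilde K$ is the $\pi_1 S$-invariant proper convex hull in $\tilde \bE$ of the preimage $\tilde V$ of the ideal vertex set $V$ of $\Sigma$. Each face $F$ of $\Sigma$ lifts to a $\pi_1 S$-orbit of ideal polygons, each contained in a totally geodesic plane $\tilde P_F \subset \HH^3$ whose ideal boundary is a round circle $\tilde C_F \subset \CP^1$. Orienting $\tilde P_F$ by its outward normal (pointing away from $\tilde K$), the Gauss map selects the open disk $\tilde D_F \subset \CP^1$ on the outward side. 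Since the deck action preserves $\tilde K$, these disks descend to a well-defined collection of round disks $D_F$ in the $\CP^1$-surface $(S, \sigma) = \partial_\infty \bE$, whose boundary circles pass through the ideal vertices of the corresponding face.

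Next I would verify the three Delaunay axioms. For the covering condition $S \setminus \bigcup_F D_F = V$, I use that any point $\tilde x \in \partial_\infty \tilde \bE$ not equal to a vertex fails to lie in the closure at infinity of $\tilde K$, hence some support hyperplane $\tilde P_F$ of $\tilde K$ strictly separates $\tilde x$ from $\tilde K$, placing $\tilde x \in \tilde D_F$. Each boundary circle $\partial D_F$ passes through the ideal vertices of its face, giving at least three points of $V$ on each lift of $\partial D_F$. The embeddedness of each $D_F$ as a disk in $(S,\sigma)$ reduces to the statement that the restriction of the quotient map $\partial_\infty \tilde \bE \to S$ to $\tilde D_F$ is injective; this follows because any deck translate $g \tilde D_F$ lies outward from the plane $g \tilde P_F$, and the properness of $\tilde K$ ensures that the inward half-spaces of distinct face planes of $\tilde\Sigma$ all contain $\tilde K$, so no non-trivial $\pi_1 S$-translation can identify two points of a single $\tilde D_F$.

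For the matching of angles, I would invoke the standard fact that if two totally geodesic planes $P, P' \subset \HH^3$ meet transversely along a geodesic $e$ with ideal endpoints $v_\pm$, then the dihedral angle between $P$ and $P'$ equals the Euclidean angle of intersection between $\partial_\infty P$ and $\partial_\infty P'$ at $v_\pm$. This is most efficiently checked in the upper half-space model with $v_+$ placed at infinity, so that both planes become vertical half-planes and their ideal circles become Euclidean lines through $v_-$ meeting at the same angle. For adjacent faces $F, F'$ of $\Sigma$ sharing an edge $e$, the outward disks $D_F, D_{F'}$ overlap in the lens bounded by arcs of $\partial D_F$ and $\partial D_{F'}$, and a careful tracking of the outward normals shows that the ``angle of the complement of $D_{F'}$ in $D_F$'' from the Delaunay circle pattern definition equals the exterior dihedral angle between the two faces. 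The step I expect to be hardest is the embeddedness of each $D_F$, which requires a careful separation argument in the universal cover exploiting the properness hypothesis on the convex hull; the rest is an assemblage of standard facts about ideal hyperbolic polygons and their boundary circles at infinity.
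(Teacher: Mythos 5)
The paper offers no proof of this lemma beyond the remark that it ``follows from the definition, see [delaunay]'', so the comparison is with what a complete argument would require rather than with a written proof. Your framework (lift, identify each face plane with a support plane of the convex hull $\tilde K$, take the outward ideal disks, verify the Delaunay axioms, and check the angle identity in the upper half-space model) is the natural one, and the covering condition and the angle computation are essentially fine. One caveat applies throughout: when the developing map of $\sigma$ is not injective, $\tilde \bE$ is not a subset of $\HH^3$ and $\tilde K$ is not a convex body in $\HH^3$ (the paper is explicit about this in Section \ref{ssc:complex-ends}), so your global separation arguments must be replaced by intrinsic ones, e.g.\ via the nearest-point projection onto $\tilde K$ inside $\tilde\bE$; this is routine but should be acknowledged.

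The genuine gap is the embeddedness step, which you rightly flag as the crux but then dispose of with a non-argument. You assert that because the inward half-spaces of $\tilde P_F$ and $g\tilde P_F$ both contain $\tilde K$, no deck transformation can identify two points of $\tilde D_F$. But that property holds for \emph{every} pair of faces of $\tilde\Sigma$, in particular for adjacent ones, whose outward disks \emph{do} overlap -- the lens of overlap is precisely what carries the intersection angle that the lemma computes. So ``both inward half-spaces contain $\tilde K$'' is fully compatible with $\tilde D_F\cap g\tilde D_F\neq\emptyset$ and rules out nothing. What must actually be excluded is that a face $F$ and a nontrivial translate $gF$ have crossing (or nested) circumcircles; equivalently, that the round disk $\tilde D_F$ -- which avoids the lifted vertex set and the limit set and carries at least three lifted vertices on its boundary -- meets one of its own translates. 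This is a global statement about the hyperbolic end and the vertex configuration, not a formal consequence of convexity: locally, a convex ideal polyhedral surface can perfectly well have a face adjacent to a translate of itself, and for sparse vertex sets the circumdisks of intrinsic Delaunay decompositions of hyperbolic surfaces are well known to fail to embed. A correct proof has to supply the additional input (from the structure of the convex hull of $\tilde V$ together with the full ideal boundary of $\tilde K$) that prevents this; as written, your proposal does not contain it.
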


The proof of this lemma is basically straightforward and follows from the definition, see \cite{delaunay}.

It is also useful to relate ideal polyhedral surfaces to equivariant ideal polyhedral immersions in $\HH^3$. Indeed, let $\Sigma\subset \bE$ be an ideal polyhedral surface in $\bE$, and let $u:S\to \Sigma$ be a homeomorphism. Then $u$ lifts to a homeomorphism $\tilde u: \tilde S\to \HH^3$, which is a polyhedral map, but is not necessary embedded -- only immersed. (It is embedded if the developing map of the $\CP^1$-structure associated to $\bE$ is injective.) Moreover, this immersion $\tilde u$ is equivariant with respect to the holonomy representation $\rho:\pi_1S\to PSL(2,\C)$ of $\cE$, in the sense that
$$ \forall x\in \tilde S, \forall \gamma\in \pi_1S, \tilde u(\gamma.x)=\rho(\gamma)\tilde u(x)~. $$
Conversely, given such an immersed equivariant (convex) ideal polyhedral immersion, it defines a hyperbolic end in which its image appears as an ideal hyperbolic surface.

As mentioned above, the dual of an ideal hyperbolic surface $\Sigma\subset \bE$ is a surface in $\bE^*\cup\partial_\infty\bE$, and intersecting $\partial_\infty\bE$ exactly at the vertices of $\Sigma$. The faces of $\Sigma^*$ are isotropic in $\bE^*$, each containing a unique vertex of $\Sigma$ (its dual vertex), and its edges are all space-like.

\subsection{Compact polyhedral surfaces and their duals}
\label{ssc:compact-dual}

Similar properties hold for compact polyhedral surfaces in a hyperbolic end $\bE$. Those polyhedral surfaces are again related to equivariant polyhedral immersions of $\tilde S$ in $\HH^3$ -- we do not repeat the description of the relation here, since it is identical to that of ideal polyhedral surfaces. 
We repeat here for the reader's convenience the definition of the induced and dual metrics on a compact polyhedral surface, as well as the spaces of compact polyhedral surfaces with prescribed induced and dual metric.

\begin{definition} \label{df:metrics-compact}
  Let $\Sigma$ be a compact polyhedral surface in a hyperbolic end $E$.
  \begin{enumerate}
  \item The {\em induced metric} on $\Sigma$ is the pull-back on $\Sigma$ of the hyperbolic metric on $\cE$ by the canonical inclusion. Given a hyperbolic metric $h$ with cone singularities of angles less than $2\pi$ on $S$, we denote by $\Pcomet$ the space of pairs $(\bE,\Sigma)$, where $\bE\in \cE_S$ and $\Sigma$ is a compact polyhedral surface in $\bE$ with induced metric isotopic to $h$.
  \item The {\em dual metric} on $\Sigma$ is the induced metric on $\Sigma^*$. It is a spherical metric with cone singularities, of angle larger than $2\pi$, at the vertices of $\Sigma^*$, which correspond to the faces of $\Sigma$. Given a spherical metric $h^*$ with cone singularities of angles larger than $2\pi$ on $S$, we denote by $\Pcodua$ the space of pairs $(\bE,\Sigma)$, where $\bE\in \cE_S$ and $\Sigma$ is a compact polyhedral surface in $\bE$ with dual metric isotopic to $h^*$.
  \end{enumerate}
\end{definition}


\subsection{Hyperideal polyhedral surfaces and their duals}
\label{ssc:hyperideal}

Let now $\Sigma\subset \bar\bE$ be a hyperideal polyhedral surface in an hyperbolic end $\bar\bE$. The intersection of $\Sigma$ with $\bE$ is a non-compact hyperbolic surface, with induced metric a complete hyperbolic metric on $\Sigma$ minus $n$ disks, where $n$ is the number of vertices of $\Sigma$. Each end of this induced metric has infinite area.

By definition, each vertex $v$ of $\Sigma$ is in $\bE^*$, and is dual to a totally geodesic plane $v^*\subset \bE$, which intersects orthogonally all edges of $\Sigma$ adjacent to $v$. It is therefore possible to {\em truncate} $\Sigma$, by replacing a neighborhood of each vertex $v$ by the disk in $v^*$ bounded by $v^*\cap \Sigma$. As a consequence, each edge $e$ of $\Sigma$ has a well-defined length, which is the length of $e$ between the planes dual to its vertices.

\subsection{The complex structure on the space of ideal polyhedral surfaces}
\label{ssc:complex-ideal}

A feature which is specific to ideal polyhedral surfaces -- and will be used below -- is that there are natural complex coordinates on the space $\Pidn(\Gamma)$ of ideal polyhedral surfaces with combinatorics given by $\Gamma$, when $\Gamma$ is the 1-skeleton of a triangulation of $S$. (Non-triangulated surfaces can be included if one chooses a triangulation of the faces of degree larger than $3$.) This complex structure can be understood in terms of the cross-ratio parameters associated to the edges, see \cite[Section 4.1]{thurston-notes} and \cite[Def 1.1]{lam2019quadratic}.

Let $\Sigma$ be such a triangulated ideal polyhedral surface, and let $e$ be an oriented edge, connecting vertices $v_i$ and $v_j$. Let $v_k$ and $v_l$ be the other vertices of the triangles on the left and right of $e$, respectively. The cross-ratio parameter associated to $e$ is then
$$ cr_{ij} = \frac{(v_k-v_i)(v_l-v_j)}{(v_k-v_j)(v_l-v_i}~. $$
where the vertices are identified to their coordinate in any affine chart in $\CP^1$. (The definition does not depend on the choice of the affine chart, because the cross-ratio is invariant under M\"obius transformations.)

Those cross-ratio parameters satisfy two polynomial conditions at each vertex. Let $v_0$ be a vertex, and let $v_1, v_2,\cdots, v_n$ be the adjacent vertices, in cyclic order. Then
$$ \prod_{i=1}^n cr_{0i}=1~, $$
$$ cr_{01} + cr_{01}cr_{02} + \cdots + cr_{01}cr_{02}\cdots cr_{0n}=0~. $$
Solutions of those two equations provide local coordinates on $\Pidn(\Gamma)$, that is, any small deformation of the $cr_{ij}$ satisfying the two equations at every vertex corresponds to an ideal polyhedral surface close to $\Sigma$.

Another important property of those cross-ratio parameters is that the holonomy along any closed curve can be recovered as a polynomial expression of the cross-ratio parameters. As a consequence, the subspace of parameters corresponding to a fixed hyperbolic end forms a (complex) subvariety of $\Pidn(\Gamma)$, equipped with the complex structure defined by the coordinates, see \cite{lam2019quadratic}.

The cross-ratio coordinates associated to an edge have an interpretation in terms of the geometric data on $\Sigma$: the argument of $cr_{ij}$ is the angle at the edge $e=[v_iv_j]$, while $\log|cr_{ij}|$ is equal to the shear along $e$, that is, the oriented distance along $e$ between the orthogonal projection on $e$ of the opposite vertices of the triangles on the left and on the right of $e$, see \cite[Section 4.1]{thurston-notes} or \cite[Section 6.1]{lam2019quadratic}.

\section{Projective rigidity}

In this section we present the arguments showing that polyhedral surfaces in hyperbolic ends (whether ideal, compact or hyperideal) are projectively rigid with respect to their induced metric, that is, they cannot be deformed within a fixed hyperbolic end without changing their induced metric. The same is true of compact, ideal or hyperideal polyhedra surfaces with respect to their dual metric.

The proof is based on that given by Bonsante and Wolf \cite{bonsante-wolf:projective} for circle packings, but stated in a polyhedral context where the relation to Pak's original argument is clearly visible.

\subsection{An (incorrect) sketch of the argument}

We first provide a brief sketch of the argument, which only works under a simplying hypothesis. This sketch works only for compact (or hyperideal surfaces), we will see below how it can be adapted to prove projective rigidity of ideal polyhedral surfaces when the induced metric is prescribed.

In each case, one needs to prove that a polyhedral surface (either the surface $\Sigma\subset \bE$ or, when considering the dual metric, the dual surface $\Sigma^*\subset \bar \bE$) cannot be deformed isometrically. The first step is to decompose this surface into triangles, by triangulating its non-triangular faces. 

An infinitesimal deformation, in a given hyperbolic end, would then appear as a set of vectors associated to each of the vertices (or the dual vertices when considering the dual metric). This ``discrete vector field'' should leave invariant the lengths of all edges of the triangulated surfaces. This means that for each edge, the orthogonal projections of the vectors at the two endpoints point in the same direction along the edge.

Assuming those orthogonal projections are non-zero, they define an orientation of the edges. It follows from its definition, and from the convexity of the polyhedral surface, that, for every vertex $v$, if the edges at $v$ are (in cyclic) order $e_1, \cdots, e_n=e_0$, then either they are all oriented towards or away from $v$, or there exist indices $i_0, i_1$ such that the orientation is towards $v$ for $e_{i_0+1}, \cdots, e_{i_1}$ and away from $v$ for $e_{i_1+1}, \cdots, e_{i_0}$. Pak's  argument \cite{pak:short}, used for polyhedral surfaces in Euclidean space and extended by Bonsante and Wolf \cite{bonsante-wolf:projective} to surfaces of high genus, then draws a contradiction by counting in two different ways the number $c$ of changes of orientation from one edge to the next at a vertex.
\begin{itemize}
\item $c\geq t$, where $t$ is the number of triangles in the surface, because there is no way to orient the three edges of a triangles in such a way that the orientations are compatible at each vertex,
\item $c\leq 2v$, where $v$ is the number of vertices, because of the condition given above on the orientation. 
\end{itemize}
Let $e$ be the number of edges in the triangulation. We see that
\begin{itemize}
\item $t-e+v=2-2g<0$ by the Euler relation, so $2t-2e+2v=4-4g$. But $3t=2e$, so that $2v-t=4-4g<0$.
\item $t\leq c\leq 2v$, so that $2v-t\geq 0$.
\end{itemize}
This is the required contradiction.

\subsection{A combinatorial lemma}

In practice, this argument needs to be applied with some care, because the deformation vectors might be orthogonal to some edges, and those edges are then not oriented. For this reason, a more careful analysis is needed.

We consider a closed, oriented surface $\Sigma$ of genus at least $2$, with a quasi-simplicial triangulation $\tau$. A {\em decoration} is an orientation of some of the edges  of $\tau$. 

Given such a partial orientation, we will count the number of {\em change of orientations} at the corners of $\tau$. Specifically, we will consider, for each corner of $\tau$ (that is, for each pair $(f,v)$, where $f$ is a face and $v$ is a vertex of $f$), that the decoration has:
\begin{itemize}
\item no change of sign, if the edges are either both non-oriented, or both oriented away from $v$, or both oriented towards $v$,
\item half a change of sign, if one edge is oriented and the other is not,
\item one change of sign, if one edge is oriented towards $v$ and the other away from $v$.
\end{itemize}

We will say that a decoration is {\em tight}, if there are at most $2$ changes of orientation at each vertex, and at most $1$ at vertices where there are at least $3$ non-oriented vertices, or two non-oriented consecutive vertices. 

\begin{lemma}\label{lm:pak}
  Let $\Sigma$ be a closed oriented surface of genus at least $2$. The only tight decoration of any quasi-simplicial triangulation of $\Sigma$ is the trivial one, that is, the decoration for which no edge is oriented.
\end{lemma}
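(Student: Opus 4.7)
The plan is to argue by contradiction, assuming a non-trivial tight decoration exists, and to carry out a Pak-style double counting of the total sign-change count $c := \sum_v \chi(v)$, where $\chi(v)$ is the sum of contributions at the corners based at $v$. The upper bound is immediate from tightness: since $\chi(v)\le 2$ at every vertex, $c\le 2V$. Writing $V_1$ for the ``constrained'' vertices (those with $\ge 3$ non-oriented incident edges, or two cyclically adjacent non-oriented incident edges, where the budget drops to $1$) and $V_2$ for the remainder, a sharper form of the bound is $c\le 2|V_2|+|V_1|$.

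For the lower bound I would perform a face-by-face case analysis: for a triangular face $f$ containing $k\in\{0,1,2,3\}$ oriented edges, the sum $c_f$ of the three corner contributions of $f$ satisfies $c_f=0$ if $k=0$ and $c_f\ge 1$ if $k\ge 1$ (more precisely $c_f=1$ when $k=1$; $c_f\in\{1,2\}$ when $k=2$ depending on whether the two oriented edges agree or disagree at the vertex they share; and $c_f\in\{1,3\}$ when $k=3$ depending on whether they form a directed cycle). Summing yields $c\ge F_o$, where $F_o$ is the number of faces carrying at least one oriented edge. Euler's relation $3F=2E$ together with $V-E+F=2-2g$ gives $F=2V-4+4g>2V$ for $g\ge 2$, so were the decoration fully oriented one would have $F_o=F$ and both bounds combine to $F\le c\le 2V$, contradicting $F>2V$; this is exactly the content of the ``incorrect sketch'' in the previous subsection.

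The real work is to handle partial orientations, where $F_o$ can a priori be much smaller than $F$ and so $F_o\le 2V$ is not yet a contradiction. My plan is to exploit the sharper tightness bound $\chi(v)\le 1$ at constrained vertices to extend the partial orientation to a full orientation that still satisfies $\chi(v)\le 2$ everywhere, reducing to the previous case. Indeed, at a constrained vertex the budget of $1$ forces a very rigid local picture: all oriented edges at $v$ must point the same way and the non-oriented edges must form a single contiguous cyclic arc, so each non-oriented edge can be oriented in a preferred direction while only perturbing $\chi(v)$ by a controlled amount. The resulting full orientation contradicts the fully oriented case already treated. The hard part, which I expect to be the main obstacle, is verifying that the local choices of how to orient each non-oriented edge can be made globally compatible (each edge receiving the same orientation from both its endpoints); this is where the quasi-simplicial combinatorial assumption and the strict Euler inequality $F-2V\ge 4$ for $g\ge 2$ should enter. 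An essentially equivalent alternative, perhaps easier to execute directly, is to keep the decoration partial and strengthen the face-by-face lower bound by attributing an extra weight of $1/2$ to each non-oriented edge flanked by an oriented edge at some endpoint; the delicate point in that formulation is avoiding double counting against the tightness budget at vertices in $V_1$.
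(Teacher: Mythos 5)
Your setup (the corner-by-corner count $c$, the upper bound $c\le 2|V|$ refined at constrained vertices, the lower bound $c\ge F_o$, and the Euler computation $|F|=2|V|-4+4g>2|V|$) matches the paper's, but everything you have actually carried out is precisely what the paper itself labels ``an (incorrect) sketch of the argument'': it disposes only of fully oriented decorations. The entire content of the lemma is the partially oriented case, and your treatment of it is a plan with an acknowledged hole, not a proof. Worse, the primary route you propose --- extending the partial orientation to a full orientation that still has at most $2$ changes at every vertex --- faces an obstruction that is structural rather than technical: since $|F|>2|V|$, \emph{no} full orientation of the edges of the triangulation can satisfy the $\le 2$ bound at every vertex, so the implication ``a non-trivial tight decoration exists $\Rightarrow$ such an extension exists'' is exactly as strong as the lemma itself and cannot be obtained from the local bookkeeping you describe. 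Your own local analysis already shows the difficulty: at a vertex of $V_2$ with two non-adjacent unoriented edges, all oriented edges must point the same way (else the budget of $2$ is already exceeded), and orienting both unoriented edges against that common direction produces $4$ changes; so the admissible local choices are genuinely constrained, and nothing in the proposal addresses how to satisfy all of these constraints simultaneously and consistently at both endpoints of every edge. The second alternative (reweighting unoriented edges by $1/2$) is likewise only named, not executed.

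The paper closes the gap by a different device, which is the idea missing from your proposal: delete every triangle all of whose edges are unoriented, and run the count on the resulting triangulated surface with boundary. Every boundary vertex then automatically has two consecutive unoriented edges (those bounding the deleted region), hence by the definition of tightness has budget $1$; this gives the sharpened bound $c\le 2|V|-e_b$, where $e_b$ is the number of boundary edges (equivalently, boundary vertices), while every remaining triangle still has an oriented edge and so contributes at least $1$, giving $c\ge |F|$. The Euler relation for a surface with boundary, $3|F|=2|E|-e_b$ together with $|V|-|E|+|F|=2-2g-b$, then yields $2|V|-e_b=|F|+(4-4g-2b)<|F|$, the desired contradiction. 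If you want to repair your write-up, replace the orientation-extension step by this deletion step and the observation that deletion manufactures constrained vertices exactly along the boundary.
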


\begin{proof}
  First remove from $\Sigma$ all triangles where no edge is oriented. This leaves a (possibly disconnected) triangulated surface with boundary, for which all boundary edges non-oriented.  Consider a connected component of this surface with boundary. It is a surface of genus $g$ with $b$ boundary components, with $\chi=2-2g-b<0$. Let $e_b$ be the number of boundary edges. 

  Note that in this surface, each triangle has at least one oriented edge (since triangles with no oriented edge has been removed). It follows that each triangle has at least one change of orientation, indeed:
  \begin{itemize}
  \item if all three edges are oriented, the orientation cannot be compatible at all corners, so there is at least one change of orientation,
  \item if exactly one edge is non-oriented, there is half a change of orientation at each of its endpoints,
  \item if exactly one edge is oriented, there is half a change of orientation at each of its endpoints.
  \end{itemize}

  We now call $c$ the total number of changes of orientations. Then $c\leq 2|V|-e_b$, since by the definition of a tight decoration the total inversion at each boundary vertex is at most $1$, and $e_b$ is also the number of boundary vertices. Moreover, $c\geq |F|$ since any triangle with at least one oriented edge has at a sum of changes of orientation of at least $1$. 

  $$ |V|-|E|+|F| = 2-2g-b~, $$
  while
  $$ 3|F| = 2|E|-e_b~, $$
  because each interior edge bounds $2$ faces, while each boundary edge bounds only one face. As a consequence,
  $$ 2|V|-(3|F|+e_b)+2|F| = 4-4g-2b~, $$
  so
  $$ 2|V|-e_b = |F| + (4-4g-2b) < |F|~. $$

  This is a contradiction since $c\geq |F|$ while $c\leq 2|V|-e_b$.  
\end{proof}


\subsection{Projective rigidity of polyhedral surfaces}
\label{ssc:prorig}

Rather than treating each case separately (compact, ideal or hyperideal polyhedral surfaces, induced metric or dual metric) we consider directly a more general notion that will include most of the cases that we intend to study.

\begin{definition}[(General) polyhedral surface]
  Let $\bE\in \cE_S$ be a hyperbolic end, and let $\bar \bE$ be its projective extension, as seen in Section \ref{ssc:ends}. An {\em admissible polyhedral surface} in $\bar \bE$ is a triangulated convex polyhedral surface with vertices either in $\bE$ or in $\bE^*$ (that is, no ideal vertex) and such that no edge is contained in $\bE^*$ and light-like.
\end{definition}

In this definition, we consider admissible polyhedral surfaces that are triangulated, in other terms, if a polyhedral surface has non-triangular faces, then one needs to triangulate them by adding as edges some of the diagonals. It is however acceptable that a pair of adjacent triangular faces are coplanar.

For such a polyhedral surface, we can define the {\em length} of an edge $e$ as follows (see \cite{shu} for more details). We choose a lift of $e$ to an edge $\bar e$ in the universal cover of $\bar \bE$, which itself projects to a edge (still denote by $\bar e$) in the extension of $\HH^3$ by the de Sitter space. Let $\bar v_0$ and $\bar v_1$ be the two vertices of $\bar e$. Those two points can be considered as points in the Minkowski $\R^{3,1}$, of squared norm either $1$ (if in $dS^3$) or $-1$ (if in $\HH^3$). We then define (following e.g. \cite{shu}) the (complex) length of $e$ by the condition that
$$ \cosh(d(e)) = - \langle \bar v_0, \bar v_1\rangle_{3,1}~. $$
This definition clearly does not depend on the choice of the lift $\bar e$. In fact what will matter below is the fact that the deformations considered preserve the scalar product on the right-hand side.

\begin{theorem}[Projective rigidity of admissible polyhedral surfaces] \label{tm:proj-general}
  Let $\bE$ be a hyperbolic end, and let $\Sigma$ be an admissible, triangulated polyhedral surface in $\bar \bE$. Any first-order deformation of $\Sigma$ within $\bE$ that leaves invariant all edge lengths of $\Sigma$ is zero.
\end{theorem}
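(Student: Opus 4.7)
The plan is to convert the edge-length preservation condition into a decoration of the edges of $\Sigma$ by orientations (possibly with some edges left un-oriented), verify that this decoration is tight in the sense of Lemma~\ref{lm:pak}, and then invoke that lemma, combined with the convexity of the link at each vertex, to conclude that the deformation must vanish.

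First I would lift $\Sigma$ to the universal cover of $\bar\bE$ and work in the Minkowski model: each vertex $v$ lifts to a vector $\bar v \in \R^{3,1}$ with $\langle \bar v,\bar v\rangle=-1$ if $v \in \bE$ and $\langle \bar v,\bar v\rangle=+1$ if $v \in \bE^*$, and a first-order deformation assigns to each vertex a vector $\dot{\bar v} \in \bar v^\perp$. Using the formula $\cosh(d(e)) = -\langle \bar v_0,\bar v_1\rangle$ from Section~\ref{ssc:prorig}, preservation of the length of an edge $e=[v_0v_1]$ to first order becomes the linear condition $\langle \dot{\bar v}_0,\bar v_1\rangle + \langle \bar v_0,\dot{\bar v}_1\rangle = 0$. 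Setting $\alpha(v_0 \to v_1) := \langle \dot{\bar v}_0,\bar v_1\rangle$, this reads $\alpha(v_0 \to v_1) = -\alpha(v_1 \to v_0)$, so the sign of $\alpha$ defines a consistent orientation of each edge; I would declare an edge un-oriented exactly when $\alpha$ vanishes on it.

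Next I would verify that, at each vertex $v$, this decoration is tight. Let $e_1,\ldots,e_n$ be the edges at $v$ in cyclic order, with other endpoints $v_1,\ldots,v_n$; the orientation of $e_i$ is determined by the sign of $f(\bar v_i)$, where $f = \langle \dot{\bar v},\cdot\rangle$ is a linear functional on $\R^{3,1}$. Because $\Sigma$ is convex and $v$ is not ideal, the directions $\bar v_1,\ldots,\bar v_n$ form the vertex set of a convex polygon in the link of $v$ (a round spherical polygon if $v \in \bE$, and the appropriate de Sitter analogue if $v \in \bE^*$). The zero set of $f$ is the intersection of a hyperplane with the link and, by convexity of the link polygon, meets its boundary in a convex arc. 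It follows that the indices where $f(\bar v_i)$ vanishes form a contiguous subsequence and that the sign of $f(\bar v_i)$ changes at most twice as one goes around the cycle. A short case analysis, distinguishing whether the zero set of $f$ avoids the link, is tangent to it at a single vertex, crosses it transversely at two points, or contains a full edge of the link, establishes exactly the tightness condition of Lemma~\ref{lm:pak}.

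Lemma~\ref{lm:pak} then forces the decoration to be trivial: $\alpha(v \to v') = 0$ for every edge $[vv']$ of $\Sigma$. Equivalently, $\dot{\bar v}$ is Minkowski-orthogonal to $\bar v'$ for every neighbor $v'$ of $v$, and also to $\bar v$ itself. Since $v$ has at least three edges spanning linearly independent directions in $\bar v^\perp$, this forces $\dot{\bar v} = 0$; as this holds at every vertex, the first-order deformation vanishes. The main obstacle I anticipate is the tightness verification: one must carefully handle the degenerate configurations in which $\dot{\bar v}$ is orthogonal to several consecutive edge directions, and correctly formulate link-convexity at vertices in $\bE^*$, where the tangent space is Lorentzian and one must use the projective or de Sitter analogue of a convex spherical polygon rather than the Euclidean picture.
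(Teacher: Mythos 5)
Your proposal follows essentially the same route as the paper's proof: lift to the Minkowski model, use $\cosh d(e) = -\langle\bar v_0,\bar v_1\rangle$ to turn edge-length preservation into the linear condition $\langle\dot{\bar v}_0,\bar v_1\rangle + \langle\bar v_0,\dot{\bar v}_1\rangle=0$, read off a (partial) orientation of the edges from the sign of this pairing, verify tightness via convexity of the link polygon at each vertex, and invoke Lemma~\ref{lm:pak}. Your bookkeeping through the antisymmetric quantity $\alpha(v_0\to v_1)=-\alpha(v_1\to v_0)$ is a clean equivalent of the paper's computation with $\langle\bar e_0-\bar e_1,Z_{\bar e_1}\rangle+\langle Z_{\bar e_0},\bar e_1-\bar e_0\rangle=0$, and your closing step (from the trivial decoration, $\dot{\bar v}\perp\bar v$ and $\dot{\bar v}\perp\bar v'$ for all neighbours $v'$, hence $\dot{\bar v}=0$ since the edge directions at a convex vertex span $\bar v^\perp$) makes explicit a point the paper leaves compressed. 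The only place you should be a bit more careful is the tightness verification: as you flag, the degenerate configurations (two or more consecutive non-oriented edges, or three or more non-oriented edges at one vertex) require the sharpened bound of at most one change of orientation, and one must argue---exactly as the paper does---that convexity forces the set of vertices with $\langle\dot{\bar v},u_{v,e_i}\rangle=0$ to be a single contiguous arc in that case; this is the content of the second paragraph of the paper's tightness discussion and should be spelled out rather than merely gestured at.
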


\begin{proof}
  Let $Z$ be an infinitesimal deformation of $\Sigma$ preserving the lengths of all edges. In other terms, $Z\in \oplus_{v\in V}T_v\bar \bE$, where $V$ denotes the set of vertices of $\Sigma$.

  Let $\bar e$ be a lift of an edge $e$ of $\Sigma$, as above, and let $Z_{\bar e_0}$ and $Z_{\bar e_1}$ be the lifts of $Z$ to the endpoints $\bar e_0$ and $\bar e_1$ of $\bar e$. Since $Z$ does not change the length of $e$ at first order, it does not change $\langle \bar e_0,\bar e_1\rangle_{3,1}$ at first order, so that
  $$ \langle \bar e_0, Z_{\bar e_1}\rangle_{3,1} +\langle Z_{\bar e_0}, \bar e_1\rangle_{3,1}=0~. $$
  Since $Z_{\bar e_0}$ and $Z_{\bar e_1}$ are tangent to $\bar E$, we have
  $$ \langle \bar e_0, Z_{\bar e_0}\rangle_{3,1} = \langle \bar e_1, Z_{\bar e_1}\rangle_{3,1}=0~, $$
  and therefore
  $$ \langle \bar e_0-\bar e_1, Z_{\bar e_1}\rangle_{3,1} +\langle Z_{\bar e_0}, \bar e_1 - \bar e_0\rangle_{3,1}=0~. $$
  Since $e$ is not light-like, it follows that the orthogonal projections of $Z_0$ and of $Z_1$ on $\bar e$ either are both equal to zero, or they have the same orientation. This defines (if the projection is non-zero) an orientation of $e$, which clearly does not depend on the choice of the lift $\bar e$ of $e$. We define this orientation to be between $e_0$ to $e_1$, if the scalar product of $Z_{e_0}$ with the unit vector at $e_0$ along $e$ in the direction of $e_1$ is positive.

  Let $v$ be a vertex of $\Sigma$, and let $e_1, \cdots, e_n$ be the edges originating from $v$, in cyclic order. Let $u_{v,e_i}$ be the unit vector at $v$ along $e_i$, and let $Z_v$ be the deformation vector at $v$. The orientation which is given to edge $e_i$ is away from $v$ if $\langle u_{v,e_i}, Z_v\rangle>0$, towards $v$ if $\langle u_{v,e_i}, Z_v\rangle<0$, and $e_i$ is non-oriented if $\langle u_{v, e_i}, Z_v\rangle=0$. Since the $u_{v,e_i}$ form the vertices of a convex polygon in the link of $v$, there exists a subset $\{ e_{i_0}, \cdots, e_{i_1}\}\subset \Z/n\Z$ (possibly empty) such that $\langle u_{v, e_i}, Z_v\rangle>0$ for $i\in \{ e_{i_0}, \cdots, e_{i_1}\}$, a subset $\{ e_{i_2}, \cdots, e_{i_3}\}\subset \Z/n\Z$ (possibly empty) such that $\langle u_{v, e_i}, Z_v\rangle<0$ for $i\in \{ e_{i_2}, \cdots, e_{i_3}\}$, with at most one index between $i_1$ and $i_2$ and at most one index between $i_3$ and $i_0$ such that $\langle u_{v, e_i}, Z_v\rangle=0$. So in total there are at most 2 changes of orientation around $v$.

  Similarly, if there are two consecutive vertices $e_1, e_2$ at $v$ which are non-oriented, then $Z$ is orthogonal to $u_{v,e_1}$ and to $u_{v,e_2}$. The convexity of the polyhedral surface at $v$ then implies that there is a subset $\{ i_1, \cdots, i_2\}\subset \Z/n\Z$ containing $1,2$ such that $Z$ is orthogonal to the $u_{v,e_i}$ for $i\in \{ i_1, \cdots, i_2\}$, and such that $\langle Z, u_{v,e_i}\rangle$ has constant sign (and is non-zero) for all $i\not\in \{ i_1, \cdots, i_2\}$. The same conclusion holds if there are three different edges orthogonal to $Z$ at $v$. In those two cases, there is at most one change of orientation around $v$.
  
  It follows that the decoration defined by the orientation of the edges is tight. We are now precisely in a position to apply Lemma \ref{lm:pak}, and to conclude that $Z=0$. 
\end{proof}

\medskip

The various statements on the (infinitesimal) projective rigidity of various types of surfaces, with respect to their induced metrics or dual metrics, follow.
\begin{itemize}
\item Dual metrics of ideal polyhedral surfaces: Theorem \ref{tm:proj-general} is applied to the dual polyhedral surface, which is a polyhedral surface in $\bE^*$, with all faces isotropic, but all edges space-like.
\item Induced metrics on compact polyhedral surfaces: all edges and vertices are contained in $\bE$.
\item Dual metrics on compact polyhedral surfaces: we consider the dual surface, all its vertices and edges are contained in $\bE^*$, and all edges are space-like.
\item Induced metrics on hyperideal polyhedral surfaces: all vertices are in $\bE^*$, all edges intersect $\bE$, and are therefore time-like in $\bE^*$.
\item Dual metrics on hyperideal polyhedral surfaces: we consider the dual surfaces, all its vertices and are in $\bE^*$, all edges are space-like.
\end{itemize}

For non-triangulated surfaces, the same argument can be used as long as one considers not only the lengths of the edges properly speaking, but also, in addition, the lengths of some of the diagonals of the non-triangulated faces, so that, after adding those diagonals, the surface is triangulated. This is particularly relevant when considering the induced metric on the polyhedral surface, rather than just the dihedral angles. In this case, one can choose arbitrarily the diagonals of the non-triangulated faces, since any non-trivial first-order variation of the induced metric must change, at first order, the length of at least one of the diagonals or edges. 


\subsection{Projective rigidity of ideal polyhedral surfaces with prescribed edge lengths}
\label{ssc:prorig-ideal}

Theorem \ref{tm:proj-general} does apply to the rigidity of ideal polyhedral surfaces when one prescribes that the edge lengths remain fixed, because in that case all vertices are ideal. We describe here how the argument in the proof of Theorem \ref{tm:proj-general} can be adapted to this case. The construction considered here will also be useful in Section \ref{ssc:defo-ideal} to show the manifold property for the space of ideal polyhedral surfaces with prescribed edge length or induced metric (Theorem \ref{tm:ideal*}).

Let $\Sigma$ be an ideal polyhedral surface in a hyperbolic end $\bE\in \cE_S$. For each vertex $v$ of $\Sigma$, there is a one-parameter family $(H_v(r))_{r\geq r_0}$ of horospheres ``centered'' at $v$, with $H_v(r)$ and $H_v(r')$ at distance $|r'-r|$ for all $r,r'\geq r_0$. Each of those horospheres, equipped with its induced metric, is isometric to the Euclidean plane. Moreover, we can consider the normal flow, which identifies $H_v(r)$ to $H_v(r')$ for all $r'\geq r\geq r_0$ by a homothety of factor $\exp(r-r')$.

\begin{definition} \label{lm:H}
  We denote by $H_v$ the vector space of 1-parameter families of vector fields $(V(r))_{r\geq r_0}$ such that
  \begin{itemize}
  \item for all $r\geq r_0$, $V(r)$ is a parallel vector field on $H_v(r)$,
  \item for all $r'\geq r\geq r_0$, $V(r')$ is the image of $V(r)$ under the normal flow.
  \end{itemize}
\end{definition}

In other terms, $H_v$ is a 2-dimensional vector space, and its elements are parallel vector fields on the horospheres, which are invariant under the normal flow.

\begin{definition} \label{lm:H*}
  We denote by $H^*_v$ the vector space of 1-parameter families of 1-forms $(\alpha(r))_{r\geq r_0}$ such that
  \begin{itemize}
  \item for all $r\geq r_0$, $\alpha(r)$ is a parallel 1-form on $H_v(r)$,
  \item for all $r'\geq r\geq r_0$, $\alpha(r')$ is the image of $\alpha(r)$ under the normal flow.
  \end{itemize}
\end{definition}

Clearly $H^*_v$ is dual to $H_v$, with a duality defined by
$$ \alpha(V)=\alpha(r)(V(r))~, $$
evaluated at any point of $H_v(r)$, and for any value of $r\geq r_0$.

Let now $\dot v$ be a first-order displacement of $v$, and let $\dot H_v(r)$ be a compatible first-order displacement of $H_v(r)$, for all $r\geq r_0$. (That is, $v$ remains the center of $H_v(r)$ at first order under those deformations.) 

Each $\dot H_v(r)$ can be considered as a normal vector field along $H_v(r)$, that is,
$$ \dot H_v(r) = f(r) N~, $$
where $N$ is the unit normal vector field to the $H_v(r)$, pointing towards $v$. Moreover, a first-order deformation of one of the horosphere, say $H_v(r)$, uniquely determines a first-order deformation of the equidistant horospheres, say $H_v(r')$ for $r'>r_0$, under the condition that, at first order, $H_v(r)$ and $H_v(r')$ remain at constant distance $|r'-r|$.

\begin{lemma}
  For all $r\geq r_0$, $f(r)$ is a linear function on $H_v(r)$. Moreover, it is invariant under the normal flow, that is, if $\phi_{r,r'}:H_v(r)\to H_v(r')$ is the diffeomorphism defined by following the normal flow, then $f(r)=f(r')\circ \phi_{r,r'}$.
\end{lemma}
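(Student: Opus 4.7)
The plan is to work in the hyperboloid model, representing $\HH^3$ as $\{p \in \R^{3,1} : \langle p,p\rangle = -1,\ p_0>0\}$ and the ideal point $v$ by a future-directed null vector (still denoted $v$). In this model the horosphere at parameter $r$ becomes $H_v(r) = \{p \in \HH^3 : \langle v,p\rangle = -e^r\}$ (up to an additive constant in $r$ absorbed into the choice of $r_0$), the Busemann function is $b_v(p) = \log(-\langle v,p\rangle)$, and a direct computation gives $|\nabla b_v| \equiv 1$, which is exactly what makes the parameterization by $r$ compatible with the equidistance condition. A first-order displacement of the ideal point $v$ that keeps it null to first order is a vector $\dot v \in v^\perp$, and the induced first-order variation of the Busemann function is
\[
\dot b_v(p) \;=\; \frac{\langle \dot v, p\rangle}{\langle v,p\rangle}.
\]
Since $|\nabla b_v| = 1$, the normal displacement of $H_v(r)$ is, up to sign, $f(r) = \pm \dot b_v|_{H_v(r)}$; an additive constant corresponds to the component of $\dot v$ along $v$ and represents an admissible shift of the parameterization.

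For the linearity, I would fix an arbitrary unit-speed intrinsic Euclidean geodesic $\gamma(t)$ of $H_v(r)$ and compute $\gamma''(t)$ in $\R^{3,1}$. Since $\gamma$ has zero intrinsic acceleration, $\gamma''(t)$ lies in the $\R^{3,1}$-orthogonal complement of $T_{\gamma(t)} H_v(r)$, which is the 2-plane spanned by $\gamma(t)$ and $v$. Writing $\gamma'' = \alpha\gamma + \beta v$ and differentiating twice the defining constraints $\langle \gamma,\gamma\rangle = -1$ and $\langle v,\gamma\rangle = -e^r$ along $\gamma$ yields $\alpha = 0$, so $\gamma''$ is everywhere parallel to $v$. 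Because $\dot v \in v^\perp$, the function $t \mapsto \langle \dot v,\gamma(t)\rangle$ then has vanishing second derivative; the denominator $\langle v,\gamma(t)\rangle = -e^r$ being constant along $\gamma$, the restriction of $\dot b_v$ (hence of $f(r)$) to $\gamma$ is affine in $t$. As $\gamma$ is arbitrary, $f(r)$ is an affine (``linear'') function on the flat plane $H_v(r)$.

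For the normal-flow invariance, I would write the unit-speed geodesic of $\HH^3$ issuing from $p\in H_v(r)$ in the direction of $\nabla b_v(p)$ as $\gamma(t) = \cosh(t)\,p + \sinh(t)\,\nabla b_v(p)$, substitute $\nabla b_v(p) = e^{-r}v - p$, and simplify to $\gamma(t) = e^{-t}\,p + e^{-r}\sinh(t)\,v$. Reading off the time needed to reach $H_v(r')$ produces a formula of the shape
\[
\phi_{r,r'}(p) \;=\; e^{r'-r}\,p \;+\; \mu(r,r')\,v
\]
for some scalar $\mu(r,r')$. Plugging into $f(r') = \pm e^{-r'}\langle \dot v,\cdot\rangle$ and using $\langle \dot v,v\rangle = 0$, the $v$-term drops out and the exponential prefactors combine to give $f(r')(\phi_{r,r'}(p)) = \pm e^{-r}\langle \dot v,p\rangle = f(r)(p)$, which is the required invariance. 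The only delicate point of the argument is to keep the sign and normalization conventions between $\nabla b_v$, the unit normal $N$ (pointing towards $v$), and the parameter $r$ consistent; once this is fixed, both claims of the lemma reduce to the single algebraic fact that $\dot v$ lies in $v^\perp$.
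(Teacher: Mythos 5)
Your proof is correct, and it takes a genuinely different computational route from the paper. The paper disposes of the lemma in one sentence by normalizing in the upper half-space model: with $v=\infty$ the horospheres $H_v(r)$ are the horizontal planes, first-order deformations of such a horosphere within the space of horospheres are visibly given by affine functions, and the normal flow is vertical projection, which manifestly preserves them. You instead work in the hyperboloid model of $\R^{3,1}$, encode the horosphere family as level sets of the Busemann function $b_v(p)=\log(-\langle v,p\rangle)$, and reduce both assertions to the single algebraic fact $\dot v\in v^\perp$: affineness follows because intrinsic geodesics of $H_v(r)$ have ambient acceleration parallel to $v$, and normal-flow invariance follows because the flow map is $p\mapsto e^{r'-r}p+\mu v$ and the $v$-component is killed by $\langle\dot v,v\rangle=0$ while the scalar factors cancel against $\langle v,p\rangle$. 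Your version is longer but more self-contained and makes explicit the identification (only asserted in the paper) between first-order displacements of the ideal point, first-order variations of the horosphere foliation, and affine functions modulo constants -- the additive constant appearing transparently as the component of $\dot v$ along $v$. The one point to watch, which you correctly flag, is the sign/normalization convention relating $\nabla b_v$, the inward unit normal $N$, and the parameter $r$; this affects only the overall sign of $f(r)$ and neither conclusion of the lemma.
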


\begin{proof}
  A direct computation can be made in the upper half-space model, normalizing so that $H_v(r)$ is for instance the horizontal plane of equation $z=1$. The first-order deformations of this horosphere is given precisely by linear functions. Moreover, the corresponding first-order deformations of the equidistant horospheres -- which in this model correspond to the other horizontal planes -- are obtained by taking the image of this linear function by the normal flow, which in this case just corresponds to the projection along the vertical directions.
\end{proof}

Notice that only the differential of $f(r)$ is determined by $\dot v$, while adding a constant term to $f(r)$ is equivalent to changing the labelling of the horosphere $H_v(r)$ centered at $v$ by translation of $r$.

We can now state a ``projective rigidity'' lemma, which is a step in the proof of Theorem \ref{tm:ideal*}.

\begin{lemma} \label{lm:prorig-ideal*}
  Let $\Sigma \subset\bE$ be an ideal polyhedral surface in a hyperbolic end $\bE$. Any first-order deformation of $\Sigma$ which leaves the edge lengths invariant at first order is zero.
\end{lemma}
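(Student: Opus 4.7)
The plan is to adapt the Pak-type combinatorial argument used in the proof of Theorem~\ref{tm:proj-general} to the ideal setting, replacing the orthogonal projections of deformation vectors at finite vertices there by the affine functions attached here to horospheres at ideal vertices. Suppose $\dot\Sigma$ is a first-order deformation of $\Sigma$ within $\bE$ whose first-order variation of edge lengths vanishes in $\R^E/i(\R^V)$. I want to conclude that $\dot v=0$ at every vertex $v$. Each $\dot v$ is equivalent to an affine function $f_v$ on the horosphere $H_v$, well-defined up to an additive constant, whose gradient recovers $\dot v$. These additive constants parametrize exactly the freedom $i(\R^V)\subset \R^E$, so I will first choose them so that the first-order variation of edge lengths vanishes in $\R^E$ itself, not merely in the quotient.

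Next, I invoke the basic variational formula from Section~\ref{ssc:ideal-dual}: for an edge $e$ joining $v$ to $v'$, with $p\in e\cap H_v$ and $p'\in e\cap H_{v'}$, the condition $\dot L(e)=0$ reads $f_v(p)+f_{v'}(p')=0$. This lets me orient $e$ away from $v$ when $f_v(p)>0$, toward $v$ when $f_v(p)<0$, and leave $e$ unoriented when both values vanish; the identity guarantees consistency between the two endpoints. At any vertex $v$, the points $p_1,\dots,p_n$ where the edges adjacent to $v$ meet $H_v$ form the vertices of a Euclidean convex polygon (the link of $v$ in $\Sigma$), and the zero set of the affine function $f_v$ is a straight line in $H_v$ separating two open half-planes of opposite sign. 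Because such a line meets the boundary of a convex polygon in at most two points, the sign of $f_v(p_i)$ changes at most twice around the cyclic order; moreover, three or more $p_i$ lying on the zero line must form a consecutive arc (by convexity) with the rest of the polygon on a single side of that line, which is precisely the situation in which the refined clause of the tightness definition leaves at most one change of orientation.

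Once tightness is established, triangulating any non-triangular faces and applying the same length identity to the resulting diagonals, Lemma~\ref{lm:pak} forces the decoration to be trivial: $f_v(p_i)=0$ for every vertex $v$ and every adjacent edge. Since each vertex of the convex polyhedral surface has at least three incident edges and the $p_i$ are the vertices of a non-degenerate convex polygon, any affine function on $H_v$ vanishing at all of them must vanish identically. Thus $f_v\equiv 0$ at every vertex, hence $\dot v=0$, and $\dot\Sigma=0$.

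The expected main obstacle is the tightness analysis at degenerate vertices, where two or three of the $p_i$ lie exactly on the zero line of $f_v$: one must carefully verify the refined clause of the tight-decoration condition of Section~\ref{ssc:prorig} by using that a collinear subset of vertices of a convex polygon is a consecutive arc with the remainder of the polygon on one side of the line. Apart from that, the proof is a faithful translation of the arguments already carried out for compact and hyperideal polyhedral surfaces in Section~\ref{ssc:prorig}, with horosphere-based affine functions playing the role of tangent vectors at ordinary vertices.
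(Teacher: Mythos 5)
Your proposal is correct and follows essentially the same route as the paper's own proof: encode the displacement of each ideal vertex as an affine function on a horosphere, normalize the additive constants so the edge-length variation vanishes in $\R^E$ rather than in the quotient, orient edges by the sign of these functions at the intersection points with the horospheres, verify tightness from the convexity of the link polygon, and conclude via Lemma~\ref{lm:pak}. Your explicit final step (an affine function vanishing at all vertices of a nondegenerate convex polygon is identically zero, hence $\dot v=0$) is a welcome addition that the paper leaves implicit.
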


As mentioned in the introduction, the edge lengths of $\Sigma$ here should be considered as an element of $\R^E/i(\R^V)$, since edge lengths are defined only after choosing a horosphere centered at each vertex.

\begin{proof}
  Assume that $Z$ is a first-order deformation of the vertices of $\Sigma$, leaving the edge lengths invariant as an element of $\R^E/i(\R^V)$. For each vertex $v$ of $\Sigma$ and each horosphere $H_v(r)$ centered at $v$, the vector $Z_v$ determines a parallel one-form $\omega_v(r)$ on $H_v(r)$ (invariant under the normal flow as $r$ varies). This 1-form is the differential of a linear function $f_v(r)$, also invariant under the normal flow as $r$ varies, which is well-defined up to the choice of an additive constant, which corresponds to a change of horosphere in the foliation $(H_r(v))_{r\geq r_0}$. By definition of $\R^E/i(\R^V)$, the constants corresponding to the various vertices can be chosen such that, for any choice of parameters $r_v, v\in V$, the distances between the horospheres $H_v(r_v)$ remain constant at first order in the deformation. This implies that if $v$ and $w$ are the endpoints of an edge $e$, then $f_v(r)(u_{v,e}(r_v)) + f_w(r)(u_{w,e}(r_w))=0$.
  
  For each vertex $v$, each edge $e$ adjacent to $v$, and each $r\geq r_v$, we denote by $u_{v,e}(r)$ the intersection of $e$ with $H_v(r)$. We then define an orientation of $e$ similarly as in Section \ref{ssc:prorig-ideal}: away from $v$ if $f_v(r_v)(u_{v,e}(r_v))>0$, towards $v$ if $f_v(r_v)(u_{v,e}(r_v))<0$, and non-oriented if $f_v(r_v)(u_{v,e}(r_v))=0$. This orientation does not depend on the choice of $r_v$, and the orientations at the vertices $v$ and $w$ of $e$ are compatible, since
  $$ f_v(r)(u_{v,e}(r_v)) + f_w(r)(u_{w,e}(r_w))=0 $$
because the distance between $H_v(r_v)$ and $H_w(r_w)$ remains constant at first order.

Let $v$ be a vertex of $\Sigma$, and let $e_1, \cdots, e_n$ be the edges originating from $v$. The convexity of $\Sigma$ at $v$ implies that $u_{v,e_1}, \cdots, u_{v, e_n}$ are the consecutive vertices of a convex polygon in $H_v(r_v)$. Since $f_v(r_v)$ is a linear function we have a situation similar to what occured in the proof of Theorem \ref{tm:proj-general}: there exists a subset $\{ e_{i_0}, \cdots e_{i_1}\}\subset \Z/n\Z$ (possibly empty) such that $f_v(r_v)(u_{v, e_i})>0$ for $i\in \{ e_{i_0}, \cdots e_{i_1}\}$, a subset $\{ e_{i_2}, \cdots e_{i_3}\}\subset \Z/n\Z$ (possibly empty) such that $f_v(r_v)(u_{v, e_i})<0$ for $i\in \{ e_{i_2}, \cdots e_{i_3}\}$, with at most one index between $i_1$ and $i_2$ and at most one index betwee, $i_3$ and $i_0$ such that $f_v(r_v)(u_{v, e_i})=0$. Therefore, there are at most two changes of orientation at $v$.

Moreover, the same argument as in the proof of Theorem \ref{tm:proj-general} shows that if there are at least three edges such that $f_v(r_v)(u_{v, e_i})=0$, or two consecutive edges with this property, then there is at most one change of orientation at $v$.

It then follows that the decoration defined by the orientation is tight. We can therefore apply Lemma \ref{lm:pak}, and obtain the proof of the lemma.
\end{proof}

\subsection{Projective rigidity of ideal polyhedral surfaces with prescribed induced metric}
\label{ssc:prorig-induced}

The induced metric on an ideal polyhedral surface equipped with an ideal triangulation is uniquely determined by the lengths of the edges, considered as an element of $\R^E/i(\R^V)$, see \cite{penner:decorated1}. It therefore follows from Lemma \ref{lm:prorig-ideal*} that, given an ideal polyhedral surface $\Sigma$ in a hyperbolic end $\bE$, there is no non-zero first-order deformation of $\Sigma$, within the fixed end $\bE$, that preserves at first order the induced metric on $\Sigma$.

\section{Manifold structure and variation of the angles}
\label{sc:manifold}

In this section we prove the results stating that the spaces of polyhedral surfaces in (varying) hyperbolic ends, with prescribed induced metric or dual metric, are manifolds of dimension $6g-6$. All cases are, again, handled in a rather uniform manner, except for one: the space of ideal hyperbolic surfaces with prescribed dihedral angles. 

In all other cases, the result follows from a description of the infinitesimal deformations of polyhedral surfaces of fixed edge lengths, in terms of the variations of their dihedral angles. We show that, at the infinitesimal level, this description is adjoint to the description of the deformations in a fixed hyperbolic end, again with fixed induced metric. Theorem \ref{tm:proj-general} therefore shows that the tangent space to the space of deformations in a varying hyperbolic end has the expected dimension.

Rather than providing a ``general'' result that would apply in all cases but lead to unnecessary technical considerations, we explain the argument for compact polyhedral surfaces of fixed induced metric in Section \ref{ssc:defo-compact}, and then notice that the argument extends to other situations.

\subsection{Compact polyhedral surfaces with fixed induced metric}
\label{ssc:defo-compact}

We now focus on the proof of the first part of Theorem \ref{tm:compact}, that is, proving that the space $\Pcomet$ of pairs $(\bE,\Sigma)$, where $\bE\in \cE_S$ is a hyperbolic end and $\Sigma$ is a compact polyhedral surface in $\bE$ with induced metric isotopic to $h$, is a manifold of dimension $6g-6$. We denote by $n$ the number of cone singularities of $h$, that is, the number of vertices of a polyhedral surface in $\Pcomet$.

We denote by $\Pcon$ the space of compact polyhedral surfaces with $n$ vertices. In other terms, $\Pcon$ is the space of hyperbolic ends $\bE$ equipped with $n$ points which are in convex position, that is, which are vertices of a common compact polyhedral surface in $\bE$. Then $\Pcomet\subset \Pcon$. 

It also follows from its definition that $\Pcon$ is a smooth manifold of dimension $12g-12+3n$. We will see below that $\Pcomet$ is a $C^1$ submanifold of $\Pcon$.

The proof uses the dihedral angles of the polyhedral surface considered, so it requires that those dihedral angles, and their first-order variation in a deformation, are well-defined. This in turns requires that the faces are non-isotropic, which explains why we do not use this argument for the proof of Theorem \ref{tm:ideal}.

Elements in $\Pcomet$ are in one-to-one correspondence with ``compact'' equivariant polyhedral isometric embeddings of $(S,h)$ into $\HH^3$, see Section \ref{ssc:compact-dual}. Those equivariant isometric embeddings are, in turns, determined by their combinatorics, induced metrics and dihedral angles. Therefore, first-order variations fixing the induced metric (but possibly varying the holonomy) are determined by the first-order variation of the dihedral angles, which however need to satisfy a condition at each vertex. This condition on the first-order variation of the dihedral angles is given by the following well-known statement (see e.g. \cite[Theorem $A_S$]{polygones}). We state this lemma for polygons in $S^2$, for vertices in $E$, a similar statements holds in the natural extension of $dS^2$ by two copies of $\HH^2$, denoted by $HS^2$ here, see \cite{colI}.

\begin{lemma} \label{lm:polygons}
  Let $p\subset S^2$ be a polygon, with vertices $u_1, \cdots, u_n$ and angles $\theta_1, \cdots, \theta_n$. In any first-order deformation of $p$ preserving all edge lengths (at first order), we have
  $$ \sum_{i=1}^n \dot\theta_i u_i=0~. $$
  Conversely, any first-order deformation $(\dot\theta_i)_{i=1, \cdots, n}$ of $(\theta_i)_{i=1, \cdots, n}$ satisfying this condition is realized by a first-order deformation of $p$.
\end{lemma}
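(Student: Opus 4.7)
The plan is to identify first-order isometric deformations of each edge of $p$ with Killing fields of $S^2$, viewed as elements of $\mathfrak{so}(3)\cong\R^3$ acting by $\xi\cdot u=\xi\times u$, and to telescope the compatibility relations at the vertices.

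First I would observe that, since each edge $e_i=[u_i,u_{i+1}]$ is a geodesic arc of $S^2$, any first-order motion $(\dot u_i,\dot u_{i+1})$ of its endpoints satisfying the edge-length equation $\dot u_i\cdot u_{i+1}+u_i\cdot \dot u_{i+1}=0$ extends to a unique $\xi_i\in\R^3$ with $\xi_i\times u_i=\dot u_i$ and $\xi_i\times u_{i+1}=\dot u_{i+1}$: indeed, the edge-length equation is precisely the solvability condition for this $2\times 3$ linear system. At each vertex $u_i$ shared by $e_{i-1}$ and $e_i$, the Killing fields $\xi_{i-1}$ and $\xi_i$ must both produce the same displacement $\dot u_i$, so $(\xi_i-\xi_{i-1})\times u_i=0$, which forces $\xi_i-\xi_{i-1}=\mu_i\,u_i$ for some scalar $\mu_i$.

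Next, a short computation in $T_{u_i}S^2$ identifies $\mu_i$ with $\dot\theta_i$, up to a global sign fixed by the orientation of $p$: the vector $\mu_i u_i$ generates the infinitesimal rotation of $\R^3$ with axis $u_i$, which acts as a rotation of $T_{u_i}S^2$ by $\mu_i$, so that switching the Killing field from $\xi_{i-1}$ to $\xi_i$ between the two edges meeting at $u_i$ rotates the outgoing unit tangent by $\mu_i$ relative to the incoming one, changing the interior angle by exactly $\mu_i$. Telescoping the relations $\xi_i-\xi_{i-1}=\dot\theta_i\,u_i$ cyclically over $i=1,\ldots,n$ then yields $\sum_{i=1}^n\dot\theta_i\,u_i=\xi_n-\xi_0=0$, which is the direct implication.

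For the converse, given $(\dot\theta_i)$ satisfying $\sum_{i=1}^n\dot\theta_i\,u_i=0$, I would set $\xi_0:=0$ and recursively $\xi_i:=\xi_{i-1}+\dot\theta_i\,u_i$; the hypothesis guarantees $\xi_n=\xi_0$, so the assignment is cyclically consistent. The vertex motion $\dot u_i:=\xi_{i-1}\times u_i=\xi_i\times u_i$ extends along each edge $e_i$ by the Killing field $\xi_i$: this deformation preserves all edge lengths, because Killing fields are infinitesimal isometries, and it realizes the prescribed angle variations $\dot\theta_i$ by construction. The only delicate point is the identification $\mu_i=\dot\theta_i$ at each vertex, which I expect to be the main obstacle; apart from this, the whole argument is a purely linear telescoping in $\mathfrak{so}(3)\cong\R^3$.
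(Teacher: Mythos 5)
Your proof is correct. The paper itself does not prove this lemma but refers to \cite[Theorem $A_S$]{polygones}; the argument you give---attaching to each edge the unique infinitesimal rotation $\xi_i\in\mathfrak{so}(3)\cong\R^3$ restoring it, observing that compatibility at a vertex forces $\xi_i-\xi_{i-1}=\dot\theta_i u_i$ (after the standard identification of the normal component of a Killing field with the induced rotation of the tangent plane), and telescoping cyclically---is precisely the standard cocycle-type proof of this infinitesimal Pogorelov/Schl\"afli statement, and the reverse direction is obtained by the same telescoping construction. The two delicate points you flag, the solvability of the $\xi_i$-system being equivalent to the linearized edge-length equation and the identification $\mu_i=\dot\theta_i$, are both correct: the first follows because $(\xi\times u_i)\cdot u_{i+1}+u_i\cdot(\xi\times u_{i+1})\equiv 0$ for all $\xi$, so the image of $\xi\mapsto(\xi\times u_i,\xi\times u_{i+1})$ is exactly the three-dimensional space of length-preserving motions when $u_i\not=\pm u_{i+1}$; the second follows from $\frac{d}{d\epsilon}\cos\theta_i=(\xi_{i-1}-\xi_i)\cdot(t^-\times t^+)=-\mu_i\sin\theta_i$ with $t^\pm$ the unit edge tangents at $u_i$.
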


We now consider the following operator.
$$
\begin{array}{rccc}
  \Psi: & \R^{E(\Sigma)} & \to & \oplus_{v\in V}T_v\bE \\
  & (\dot \theta_e)_{e\in E(\Sigma)} & \mapsto & \left(\sum_{v\in e}\dot \theta_eu_{v,e}\right)_{v\in V}~. 
\end{array}
$$
Here the sum on the right-hand side is over all edges $e$ containing $v$, and $u_{v,e}$ now describes, for $v$ a vertex of $e$, the unit vector at $v$ along $e$.

It follows from Lemma \ref{lm:polygons} that the elements of $\ker(\Psi)$ correspond precisely to the infinitesimal variations of the dihedral angles of $\Sigma$ corresponding to infinitesimal deformations of $\Sigma$, possibly varying the holonomy.

We also introduce another operator, related to the deformations within a fixed hyperbolic end as seen above. We now choose an orientation on each edge $e$ of $\Sigma$, and denote by $e_-$ and $e_+$ its endpoints (the choice of orientation is actually irrelevant, we just need notations for the endpoints).
$$
\begin{array}{rccc}
  \Phi: & \oplus_{v\in V}T_v\bE & \to &  \R^{E(\Sigma)} \\
  & (Z_v)_{v\in V}& \mapsto & (\langle u_{e_-,e}, Z(e_-)\rangle+\langle u_{e_+,e}, Z(e_+)\rangle)_{e\in E(\Sigma)}~. 
\end{array}
$$
We define two scalar products, one on $\oplus_{v\in V}T_v\bE$:
$$ \llangle (Z_v)_{v\in V},(Z'_v)_{v\in V}\rrangle = \sum_{v\in V} \langle Z_v, Z'_v\rangle~, $$
the other on $\R^{E(\Sigma)}$:
$$ \llangle (\dot \theta_e)_{e\in E(\Sigma)}, (\dot \theta'_e)_{e\in E(\Sigma)}\rrangle = \sum_{e\in E(\Sigma)} \dot \theta_e\dot \theta'_e~. $$

\begin{lemma}
  When those scalar products are used to identify $\oplus_{v\in V}T_v\bE$ and $\R^{E(\Sigma)}$ to their duals, $\Psi$ is the adjoint of $\Phi$.  
\end{lemma}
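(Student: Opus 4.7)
The statement is essentially a bookkeeping identity: one just needs to evaluate both pairings and compare.

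The plan is to pick arbitrary elements $(\dot\theta_e)_{e\in E(\Sigma)} \in \R^{E(\Sigma)}$ and $(Z_v)_{v\in V} \in \oplus_{v\in V}T_v\bE$, and verify directly the adjointness identity
\[
\llangle \Psi((\dot\theta_e)_e),\, (Z_v)_v \rrangle \;=\; \llangle (\dot\theta_e)_e,\, \Phi((Z_v)_v) \rrangle .
\]
First I would expand the left-hand side using the definition of $\Psi$ and of the scalar product on $\oplus_{v\in V}T_v\bE$:
\[
\llangle \Psi((\dot\theta_e)_e),\, (Z_v)_v \rrangle \;=\; \sum_{v\in V}\Big\langle \sum_{e \ni v} \dot\theta_e\, u_{v,e},\, Z_v\Big\rangle \;=\; \sum_{v\in V}\sum_{e \ni v} \dot\theta_e\, \langle u_{v,e}, Z_v\rangle .
\]

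The second step is simply to reverse the order of summation. Since each edge $e$ is incident to exactly two vertices $e_-$ and $e_+$, the double sum becomes
\[
\sum_{e\in E(\Sigma)} \dot\theta_e \big(\langle u_{e_-,e}, Z_{e_-}\rangle + \langle u_{e_+,e}, Z_{e_+}\rangle\big) \;=\; \sum_{e\in E(\Sigma)} \dot\theta_e\, \Phi((Z_v)_v)_e ,
\]
which by the definition of the scalar product on $\R^{E(\Sigma)}$ is precisely $\llangle (\dot\theta_e)_e,\, \Phi((Z_v)_v) \rrangle$. Since this holds for every choice of $(\dot\theta_e)_e$ and $(Z_v)_v$, the conclusion follows.

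There is no real obstacle here; the proof is a one-line Fubini-type swap of summation once the definitions are unwound. The only mild subtlety is that the arbitrary choice of orientation of each edge used to define $e_-,e_+$ in $\Phi$ does not affect the formula, because $\dot\theta_e (\langle u_{e_-,e}, Z_{e_-}\rangle + \langle u_{e_+,e}, Z_{e_+}\rangle)$ is symmetric in the two endpoints. Hence the computation is consistent and $\Psi = \Phi^*$.
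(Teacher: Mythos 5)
Your proof is correct and is essentially the same as the paper's: both are a direct expansion of the pairing followed by a Fubini-type swap of the sums over vertices and edges, the only difference being that you start from the $\Psi$ side while the paper starts from the $\Phi$ side.
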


\begin{proof}
  Let $Z=(Z_v)_{v\in V}\in \oplus_{v\in V}T_v\bE$ and let $\dot\theta=(\dot \theta_e)_{e\in E(\Sigma)}\in \R^{E(\Sigma)}$. Then
  \begin{eqnarray*}
    \llangle \Phi(Z), \dot \theta\rrangle
    & = & \sum_{e\in E} (\langle u_{e_-,e}, Z(e_-)\rangle+\langle u_{e_+,e}, Z(e_+)\rangle)\dot \theta_e \\
    & = & \sum_{v\in V}\sum_{v\in e} \langle u_{v,e}, Z(v)\rangle \dot \theta_e \\
    & = & \sum_{v\in V}\sum_{v\in e} \langle  \dot \theta_e u_{v,e}, Z(v)\rangle \\
    & = & \llangle Z, \Psi(\dot \theta)\rrangle~.                                                
  \end{eqnarray*}
\end{proof}

\begin{proof}[Proof of Theorem \ref{tm:compact}]
  By definition, $\Pcomet$ can be considered as a subset of $\Pcon$. In the neighborhood of each point corresponding to a compact polyhedral surface $\Sigma$ which is triangulated, it is defined by $|E|$ equations (where $|E|$ is the number of edges of $\Sigma$), so we will able to conclude that it is a smooth manifold if we know that those $|E|$ equations are linearly independent.

  The dimension of $\Pcomet$ will then be equal to $\dim(\Pcon)-|E|=12g-12+3n-|E|$. If we again denote the set of (triangular) faces of $\Sigma$ by $F$, then $2|E| = 3|F|$. But $3|F|-3|E|+3=6-6g$ by the Euler relation, so that $|E|=6g-6+3n$, and the ``expected dimension'' of $\Pcon$ is therefore $6g-6$. To prove that $\Pcomet$ is a smooth manifold, we therefore only need to prove that its tangent space has dimension at most $6g-6$.

  At a point where $\Sigma$ is not triangulated, the same analysis can in fact be applied. Indeed, each non-triangular face can be triangulated in different ways, but an elementary argument shows that the intersection of the kernels of the derivative of the lengths of the different edges that can be added does not depend on the triangulation.
  
  We will use the identification of $T\Pcomet$ with $\ker(\Psi)$, seen above. It follows from the projective rigidity of compact polyhedral surfaces, seen in Theorem \ref{tm:proj-general}, that $\ker(\Phi)=0$. Since $\Phi$ and  $\Psi$ are adjoint, it follows that $\Psi$ is surjective.

  But $\Psi: \R^{E(\Sigma)} \to \oplus_{v\in V}T_v\bE$ has domain of dimension $|E|=6g-6+3n$ and target of dimension $3|V|$. Therefore $\dim \ker(\Psi)=6g-6$. We can now conclude that $\Pcomet$ is a $C^1$ manifold of dimension $6g-6$.

  Finally, the fact that the first factor projection from $\Pcomet$ to $\cE_S$ is an immersion follows from the manifold structure of $\Pcomet$, together with the projective rigidity seen in Theorem \ref{tm:proj-general}, applied in the special case of a compact polyhedral surface.

  The $C^{1,1}$ regularity of $\Pcomet$ follows from the $C^{1,1}$ regularity of the map that associates to a compact polyhedral surface its induced metric. This map is $C^{\infty}$ at all triangulated surfaces, but only $C^{1,1}$ at points where at least one face has at least 4 edges. This can be seen from Lemma \ref{lm:C11} below, which we state in a simple setting but extends to faces with more than four edges.
\end{proof}

Notice that we only claim here that $\Pcomet$ is $C^{1,1}$. This contrasts with for instance $\Pidang$, which is analytic. This $C^{1,1}$ regularity boils down to the following lemma.

\begin{lemma} \label{lm:C11}
  Let $\Sigma$ be a compact polyhedral surface, and let $f$ be a face of $\Sigma$ which is a polygon with vertices $v_1, v_2, v_3, v_4$. Under a displacement of $v_3$ along a line orthogonal to $f$, the distance (for the induced metric on $\Sigma$) between $v_1$ and $v_3$ is only $C^{1,1}$, while the other distances between the $v_i$ are $C^{\infty}$.
\end{lemma}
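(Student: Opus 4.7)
The plan is to compute the intrinsic distance $d_\Sigma(v_1,v_3)$ as a function of the signed normal displacement parameter $\epsilon$ of $v_3$, derive the two closed-form expressions that govern it on each side of $\epsilon=0$, and check that they match to first order but have distinct one-sided second derivatives at the origin.

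First I would settle the combinatorial picture. Once $v_3$ leaves the plane of $f$, the four points $v_1,v_2,v_3,v_4$ cease to be coplanar, and the face must be subdivided into two triangles by one of the diagonals $v_1v_3$ or $v_2v_4$. Because $\Sigma$ bounds a proper convex region of the hyperbolic end, the triangulation is forced by convexity: in a normal chart at a point of $f$ this corresponds to selecting the diagonal whose midpoint is the ``higher'' of the two, and this choice switches as $\epsilon$ crosses $0$. Thus for one sign of $\epsilon$ the diagonal $v_1v_3$ becomes a genuine edge of $\Sigma$, while for the opposite sign the same role is played by $v_2v_4$.

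Next I would compute $d_\Sigma(v_1,v_3)$ on each branch. On the branch where $v_1v_3$ is an edge of $\Sigma$, the distance is simply the hyperbolic length of the segment $[v_1v_3]$, an analytic function of $\epsilon$. On the other branch, the geodesic from $v_1$ to $v_3$ crosses the edge $v_2v_4$, and its length is obtained by developing the two triangles $v_1v_2v_4$ and $v_2v_3v_4$ along their common edge $v_2v_4$ into a totally geodesic plane and measuring the resulting planar distance between the images of $v_1$ and $v_3$. This gives a second, a priori different, smooth formula. Expanding both around $\epsilon=0$, the zeroth-order terms agree (both equal the Euclidean length of the diagonal in $f$), and the first-order terms vanish on each branch because $\epsilon$ is orthogonal to $f$ at $v_3$ and the length of a segment in a plane is stationary under a normal perturbation of an endpoint. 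Hence $d_\Sigma(v_1,v_3)$ is automatically $C^1$ across $\epsilon=0$.

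The content of the lemma is then the computation that the second derivatives do not match. On the ``edge'' branch the Hessian in $\epsilon$ is proportional to $1/\ell_{13}$, where $\ell_{13}$ is the Euclidean length of the diagonal $v_1v_3$ of $f$ at $\epsilon=0$. On the ``unfolding'' branch, the developed image of $v_3$ sits at distance $h$ from the line $v_2v_4$, where $h$ is the distance from $v_3$ to $v_2v_4$ measured inside $f$; a short computation (essentially the one showing that the length of the perpendicular from $v_3$ to $v_2v_4$ is stationary when $v_3$ moves normally to $f$) yields a Hessian proportional to $1/h$. For a generic convex quadrilateral $\ell_{13}\neq h$, so the one-sided second derivatives of $d_\Sigma(v_1,v_3)$ at $\epsilon=0$ differ and the function is only $C^{1,1}$. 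For any pair $(i,j)$ such that $[v_iv_j]$ is a side of $f$, the segment is an edge of the triangulated face under both choices of diagonal, so $d_\Sigma(v_i,v_j)$ is just the hyperbolic length of that segment and depends analytically on $\epsilon$.

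The main obstacle I expect is the careful second-order bookkeeping verifying that the discrepancy is really $1/\ell_{13}$ vs.\ $1/h$ and that the hyperbolic corrections to the Euclidean model do not conspire to cancel it. Both can be handled by passing to a normal chart at $v_3$ in which the hyperbolic and Euclidean metrics agree to second order, so that the relevant quadratic coefficients can be read off from the planar computations done in the two explicit formulas.
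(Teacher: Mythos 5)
Your overall strategy is the same as the paper's: identify the combinatorial switch (for one sign of the displacement the diagonal $v_1v_3$ is an edge of $\Sigma$, for the other the face folds along $v_2v_4$), note that both branches have vanishing first derivative at $y=0$, and exhibit a jump in the one-sided second derivatives. The gap is in the second-order computation on the unfolding branch, which is exactly the step you identify as ``the content of the lemma''. Developing the triangle $v_2v_3v_4$ across the fold does displace the image of $v_3$ by $\approx \frac{y^2}{2h}$ in the direction normal to the line $v_2v_4$, but the distance to $v_1$ only records the component of that displacement along the diagonal $v_1v_3$. Writing $p=[v_1v_3]\cap[v_2v_4]$, $x_1=d(v_3,p)$ and $x_0=d(v_1,p)$, similar triangles give a second derivative equal to $1/x_1$ on the unfolding branch (Euclidean model), versus $1/\ell_{13}=1/(x_0+x_1)$ on the edge branch --- not $1/h$ versus $1/\ell_{13}$. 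This matters for the conclusion: since $x_1<x_0+x_1$ strictly, the two one-sided second derivatives \emph{always} differ, so the lemma holds with no genericity hypothesis; your criterion ``$\ell_{13}\neq h$ for a generic convex quadrilateral'' is not the right condition (when $\ell_{13}=h$ your argument would wrongly allow $C^2$ regularity, and the lemma as stated is unconditional).

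A second, smaller issue is the reduction to the Euclidean model via a normal chart at $v_3$. Normal coordinates control the metric to second order only near the base point, while the quantities entering the Hessian of $y\mapsto d(v_1,v_3(y))$ are macroscopic distances along the whole geodesic. The exact hyperbolic answer (which the paper computes directly from the hyperbolic Pythagorean identity $\cosh d = \cosh x\cosh y$) is $\coth(x_1)$ versus $\coth(x_0+x_1)$, not $1/x_1$ versus $1/(x_0+x_1)$; the flat computation cannot simply be ``read off'' in a chart. The qualitative conclusion survives because $\coth$ is strictly decreasing, but you need either the exact hyperbolic formulas (as in the paper: the broken path has length $x_0+\arccosh(\cosh(x_1)\cosh(y))$ and the straight one $\arccosh(\cosh(x_0+x_1)\cosh(y))$, and these agree with the true intrinsic distances up to $O(y^4)$) or an explicit monotonicity argument replacing the flat one.
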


\begin{proof}
  Let $y$ be the coordinate of $v_3$ along the line orthogonal to $f$. The minimizing geodesic from $v_1$ to $v_3$ along $\Sigma$ is
  \begin{itemize}
  \item for $y\leq 0$, the union of two geodesic segments along two triangular edges (coplanar for $y=0$), of lengths respectively $x_0$ and $\rm{acosh}(\cosh(x_1)\cosh(y))$,
  \item for $y\geq 0$, a single geodesic segment, of length $\rm{acosh}(\cosh(x_0+x_1)\cosh(y))$.
  \end{itemize}
  It follows that for $y<0$,
  $$ d'_{13}(y) = \frac{\cosh(x_1)\sinh(y)}{2\sqrt{\cosh^2(x_1)\cosh^2(y)-1}}~, $$
  while for $y>0$,
  $$ d'_{13}(y) = \frac{\cosh(x_0+x_1)\sinh(y)}{2\sqrt{\cosh^2(x_0+x_1)\cosh^2(y)-1}}~. $$
  Clearly this derivative is Lipschitz but not $C^1$ at $y=0$, so the distance $d_{13}$ between $v_1$ and $v_3$ along $\Sigma$ is $C^{1,1}$ but not $C^2$. 
\end{proof}

\subsection{Compact polyhedral surfaces with fixed dual metric}
\label{ssc:defo-compact*}

The same argument can be used with very minor adaptations, but it needs to be applied to the dual surface $\Sigma^*\subset E^*$. A compact polyhedral surface $\Sigma$ has dual metric equal to $h^*$ if and only $h^*$ is the induced metric on the dual surface, and the arguments of the previous section can be used without change to this dual surface to prove Theorem \ref{tm:compact*}.

\subsection{Hyperideal polyhedral surfaces with fixed induced metric}
\label{ssc:defo-hyperi}

Again, the same argument can be used for hyperideal surfaces. We now consider a hyperideal polyhedral surface in the extension $\bar \bE$ of a hyperbolic end $\bE$. 

\subsection{Hyperideal polyhedral surfaces with fixed dual metric}
\label{ssc:defo-hyperi*}

Here again, the argument needs to be applied to the dual surface in $\bar \bE$. As seen in Section \ref{ssc:ends}, this dual surface has its vertices and edges in the de Sitter part $\bE^*$ of $\bar \bE$, while its faces intersect $\bE$. However the argument remains the same. 

\subsection{Ideal polyhedral surfaces with fixed edge lengths}
\label{ssc:defo-ideal}

For ideal polyhedral surfaces, the argument used for the proof of Theorem \ref{tm:compact} needs to be adapted. We consider such an ideal polyhedra surface $\Sigma$ in a hyperbolic end $\bE$.

For each vertex $v$ of $\Sigma$, the horospheres at $v$ can be identified through the normal flow. As earlier, we denote by $H_v$ the vector space associated with any of those horospheres (with $H_v$ also the vector space associated to the other horospheres, through the normal flow), and by $H_v^*$ the dual vector space, of parallel 1-forms on any of the horospheres centered at $v$, again with identification between different horospheres through the normal flow. Then $H^*_v$ is dual to $H_v$, as seen in Section \ref{ssc:prorig-ideal}.

A first-order deformation of $\Sigma$ within a fixed hyperbolic end $\bE$ is determined by the choice for each vertex $v$ of an element of $w_v\in H^*_v$, because two affine functions on a horosphere have the same differential if and only if they differ by a constant, that is, by an infinitesimal change of horosphere centered at $v$. More precisely, a first-order deformation of the data given by $\Sigma$ together with the choice of a horosphere $H_v(r)$ for each vertex $v$ is given by an affine function $f_v(r)$ on $H_v(r)$, but two such affine function differ by a constant if and only if they correspond to the same displacement of $v$ (but a different variation of $H_v(r)$), and their differentials are then the same.

For each vertex $v$ and edge $e$ ending at $v$, we denote by $u_{v,e}(r)$ the intersection with $H_v(r)$. Then $f_v(r)(u_{v_e}(r))$ is independent of $r$ (since we have seen that the functions $f_v(r)$ are invariant under the normal flow) and we denote it by $f_v(u_{v,e})$. Different choices of the deformation of the horosphere at $v$ (for a fixed choice of the displacement of $v$) differ by a constant added to $f_v(u_{v,e})$ for all edges ending at a given vertex $v$.

The ``projective rigidity'' of $\Sigma$ in $\bE$ is therefore equivalent to the existence of a non-zero vector in the kernel of
  $$
  \begin{array}{cccc}
       \phi: & \oplus_{v\in V}H_v^* & \to & \R^E/i(\R^V) \\
       & (w_v)_{v\in V} & \mapsto & [(f_v(u_{e_-, e})+f_v(u_{e_+,e}))_{e\in E}]~,
  \end{array}
  $$
where $f_v$ is any affine function on $H_v$ with $df_v=w_v$, and where the target space is the quotient $\R^E/i(\R^V)$ because elements of $i(\R^V)\subset \R^E$ correspond to different choices of $f_v$. Lemma \ref{lm:prorig-ideal*} then shows that $\ker(\phi)=0$.

First-order isometric deformations of $\Sigma$ in a {\em variable} hyperbolic end can be parameterized by the first-order variations of the dihedral angles, subject at each vertex $v$ to two conditions: the Gauss-Bonnet condition,
$$ \sum_{e\ni v} \dot \theta_e=0~, $$
and the condition coming from the fact that the deformation of the link preserves its edge lengths (up to homothety),
$$ \sum_{e\ni v} \dot \theta_e u_{v,e}=0~. $$
Therefore, those first-order isometric deformations are parameterized by the kernel of the linear operator
$$
\begin{array}{cccc}
  \psi: & (\R^E)_0 & \to & \oplus_{v\in V}H_v \\
        & (\dot \theta_e)_{e\in E} & \mapsto & (\sum_{e\ni v} \dot \theta_e u_{v,e})_{v\in V}
\end{array}
$$
where
$$ (\R^E)_0 = \{ (\dot\theta_e)_{e\in E}~|~ \forall v\in V, \sum_{e\ni v}\dot\theta_e=0 \}~. $$

However, $\psi$ is dual to $\phi$: if $(\dot \theta_e)_{e\in E}\in (\R^E)_0$ and $(w_v)_{v\in V}\in \oplus_{v\in V} H^*_v$, then
\begin{eqnarray*}
  \llangle\psi((\dot \theta_e)_{e\in E})), (w_v)_{v\in V}\rrangle & = & \sum_{v\in v} w_v(\sum_{e\ni v}\dot\theta_e u_{v,e}) \\
                                                                 & = & \sum_{e\in E} (w_{e_-}(u_{e_-,e})+w_{e_+}(u_{e_+,e})) \\
  & = & \llangle (\dot \theta_e)_{e\in E}), \phi((w_v)_{v\in V})\rrangle~. 
\end{eqnarray*}

Since $\phi$ has kernel reduced to $0$, $\psi$ has co-image reduced to $0$, and is therefore surjective. It follows that $\ker(\psi)$ has dimension $6g-6$ at all points, and that $\Pidlen$ is a manifold of dimension $6g-6$. The proof of Theorem \ref{tm:ideal*} follows from this, and from Lemma \ref{lm:prorig-ideal*}

\subsection{Ideal polyhedral surfaces with fixed dihedral angles}
\label{ssc:defo-ideal*}

We have seen in Section \ref{ssc:ideal} that ideal polyhedral surfaces with prescribed dihedral angles in hyperbolic ends are essentially equivalent to Delaunay circle patterns in surfaces with complex projective structures. Using this dictionary, the second part of Theorem \ref{tm:ideal} follows from the projective rigidity of ideal polyhedral surfaces with respect to their dihedral angles (as seen in Section \ref{ssc:prorig}) together with \cite[Cor. 1.4]{lam:pullback}.

There is however another, rather direct argument, based on the corresponding result for ideal polyhedral surfaces with prescribed induced metric, together with the complex structure on the space $\Pidn$ of ideal polyhedral surfaces with $n$ vertices. Let $\Sigma\in \Pidang$, with induced metric $h$, and let $X\in T_{\Sigma}\Pidmet$ be an infinitesimal deformation of $\Sigma$ which leaves the induced metric fixed at first order. This means that $X$ does not change, at first order, the shear parameters of $\Sigma$ along its edges. The infinitesimal deformation $iX$ of $\Sigma$ then does not change at first order the dihedral angles of $\Sigma$, so $iX\in T_\Sigma\Pidang$. Conversely, if $Y\in T_\Sigma\Pidang$, then $iY\in T_\Sigma\Pidmet$. We can therefore conclude that the space of infinitesimal deformation of $\Sigma$ which do not change the dihedral angles at first order is everywhere a vector space of dimension $6g-6$, so that $\Pidang$ is a submanifold of $\Pidn$ of dimension $6g-6$. The proof of Theorem \ref{tm:ideal}, with the exception of the Lagrangian property, follows from this and from the projective rigidity of ideal polyhedral surfaces with prescribed dihedral angles, as seen in Section \ref{ssc:prorig}.

\section{Lagrangian properties}
\label{sc:lagrangian}

In this section, we prove that the various submanifolds considered above, defined as spaces of hyperbolic ends containing a polyhedral surface (whether compact, ideal or hyperideal) with given induced metric or dual metric, are Lagrangian for the symplectic form $\omega$ defined in Section \ref{ssc:symplectic}.

Those results are all proved using variants of an argument, already used in different forms in e.g. \cite{cp,loustau:complex,loustau:minimal,mazzoli2019-2}, based on the variational properties of variants of the renormalized volume of hyperbolic ends. We describe below the argument for compact polyhedral surfaces, and then only briefly describe how the argument can be adapted for ideal and hyperideal polyhedral surfaces.

\subsection{The (relative) renormalized volume of a hyperbolic end}


We consider a hyperbolic end $\bE\in \cE_S$, and a closed surface $\Sigma_0\subset \bE$, which bounds a geodesically convex subset in $\bE$ (this condition is not really necessary). To each metric $h$ in the conformal class at infinity, Epstein \cite{epstein:envelopes} associated a (possibly singular) surface $\Sigeps{h}\subset \bE$, defined as the envelope of the horospheres $H_v(h)$, which are the projections to $\bE$ of the sets of points $x\in \tilde \bE$ such that the visual metric from $x$ at $v$ is equal to the lft to $\partial_\infty\tilde{\bE}$ of $h$. If $h$ is replaced by $e^{2r}h$, for some $r>0$, the Epstein surface $\Sigeps{h}$ is replaced by $\Sigeps{e^{2r}h}$, which is at constant distance $r$ from $\Sigeps{h}$ towards $\partial_\infty\bE$.

Two key properties of those Epstein surfaces are as follows (for the second point, see e.g. \cite[Lemma 3.6]{compare}).
\begin{enumerate}
\item For any conformal metric $h$, there exists $r_0>0$ such that, for all $r\geq r_0$, $\Sigeps{e^{2r}h}$ is embedded, and between $\Sigma_0$ and $\partial_\infty\bE$.
\item If we define, for $r\geq r_0$,
  $$ W(e^{2r}h) = V(\Sigma_0, \Sigeps{e^{2r}h}) - \frac 14\int_{\Sigeps{e^{2r}h}}H da$$
  where $H=\tr(B)$ is the mean curvature, and $V(\Sigma_0, \Sigeps{e^{2r}h})$ is the volume of the region of $\bE$ between $\Sigma_0$ and $\Sigeps{e^{2r}h}$, then for all $r,r'\geq r_0$, 
  $$ W(e^{2r'}h) - W(e^{2r}h) = (r'-r)\pi |\chi(S)|~. $$
\end{enumerate}
This allow us to define, for any conformal metric $h$ on $\partial_\infty\bE$,
$$ W(h) = W(e^{2r}h) - r\pi  |\chi(S)|~, $$
for any $r\geq r_0$. In particular, we define
$$ V_R(\Sigma_0) = W(h_{-1})~, $$
where $h_{-1}$ is the hyperbolic conformal metric on $\partial_\infty\bE$. Heuristically, $V_R(\Sigma_0)$ is the renormalized volume of the region of $\bE$ situated between $\Sigma_0$ and $\partial_\infty\bE$. 

A key property of this renormalized volume is its first variation formula, which can be obtained using exactly the same arguments as in \cite[Section 6]{volume}.

\begin{lemma}
  Under a first-order deformation of $\bE$, including a first-order deformation of $\Sigma_0$ in $\bE$, 
  $$ \dot V_R(\Sigma_0) = - \mathrm{Re}(\langle \dot c,q\rangle) - \frac 12 \int_{\Sigma_0}\left(\dot H +\frac 12 \langle \dot I, \II\rangle_I\right)da_I~, $$
  where $q$ is the holomorphic quadratic differential on $\partial_\infty\bE$ defined as the Schwarzian of the uniformization map.
\end{lemma}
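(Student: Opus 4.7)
The strategy is to adapt the proof of the first variation of the renormalized volume given in \cite[Section~6]{volume} to the relative setting, where $\Sigma_0$ plays the role of an interior boundary. The two key ingredients are the Schl\"afli formula for hyperbolic $3$-manifolds with smooth boundary and the asymptotic expansion of the induced metric of an Epstein surface as it approaches $\partial_\infty \bE$.

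Fix $r\geq r_0$ large enough so that $\Sigeps{e^{2r}h_{-1}}$ is embedded and separates $\Sigma_0$ from $\partial_\infty \bE$, and let $\Omega_r \subset \bE$ denote the compact region bounded by these two surfaces. By the definitions recalled above,
$$ V_R(\Sigma_0) = V(\Omega_r) - \frac{1}{4}\int_{\Sigeps{e^{2r}h_{-1}}} H \, da - r\pi|\chi(S)|~. $$
First I would differentiate each term under the given first-order deformation of $\bE$ (which moves both boundary components of $\Omega_r$). For $V(\Omega_r)$, the Schl\"afli formula for hyperbolic $3$-manifolds with smooth boundary produces a ``bulk'' contribution $-\tfrac14\int_{\partial\Omega_r}\langle \dot I,\II\rangle_I\,da_I$ together with displacement terms $\int f\,da$ encoding the normal velocity of each component of $\partial \Omega_r$ in the ambient hyperbolic manifold. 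For the mean curvature boundary integral on the Epstein surface, the variation is computed by combining the pointwise formulas for $\dot H$, for $\dot{(da)}$, and for the motion of a surface in $\bE$.

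The second step is to collect terms on each boundary component. On $\Sigma_0$ no $H\,da$ integral is subtracted, so only the Schl\"afli piece and the displacement of $\Sigma_0$ contribute; a standard algebraic manipulation (of the kind used for instance in \cite{cp,mazzoli2019-2}) combines these into exactly $-\tfrac12\int_{\Sigma_0}\bigl(\dot H + \tfrac12\langle \dot I,\II\rangle_I\bigr)\,da_I$, which is the desired interior boundary term. On $\Sigeps{e^{2r}h_{-1}}$, the Schl\"afli term and $-\tfrac14\,d\!\int H\,da$ combine into precisely the integrand that is analyzed in the closed case at infinity in \cite{volume}.

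The third and final step is to pass to the limit $r\to\infty$. Using the asymptotic expansion of the induced metric and shape operator of $\Sigeps{e^{2r}h_{-1}}$ in powers of $e^{-r}$, whose leading coefficients are determined by $h_{-1}$ and by the Schwarzian $q$ of the uniformization map, the Epstein-surface contribution converges to $-\Real\langle \dot c, q\rangle$, and the relation $W(e^{2r'}h)-W(e^{2r}h) = (r'-r)\pi|\chi(S)|$ together with the additive term $-r\pi|\chi(S)|$ guarantees the cancellation of the divergent and $r$-linear pieces. The main technical obstacle is precisely this asymptotic computation at infinity; however it is identical to the one carried out in \cite[Section~6]{volume}, and the introduction of the interior boundary $\Sigma_0$ does not interfere with it, since all contributions attached to $\Sigma_0$ are finite and independent of $r$ once $\Sigeps{e^{2r}h_{-1}}$ lies past $\Sigma_0$.
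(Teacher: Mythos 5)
Your proposal is correct and follows essentially the same route the paper intends: the paper's own justification is simply that the formula ``can be obtained using exactly the same arguments as in \cite[Section 6]{volume}'', and your sketch is precisely that argument adapted to the relative setting --- apply the smooth Schl\"afli formula to the region between $\Sigma_0$ and an Epstein surface, collect the finite contribution on $\Sigma_0$ and the asymptotic contribution on $\Sigeps{e^{2r}h_{-1}}$, and let $r\to\infty$. The only caveat is to keep the orientation conventions straight (the sign of the $\Sigma_0$ term comes from the inward-pointing normal, and your stated split of the Schl\"afli formula into a ``bulk'' $-\tfrac14\int\langle\dot I,\II\rangle$ piece plus displacement terms should be checked against the convention $\dot V=\tfrac12\int_{\partial}(\dot H+\tfrac12\langle\dot I,\II\rangle)\,da$ used elsewhere in the paper), but this is bookkeeping rather than a gap.
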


The first term on the right-hand side corresponds to the variation of the conformal metric at infinity, while the second term corresponds to the deformation of $\Sigma_0$.

\subsection{Compact polyhedral surfaces}

We can now use the renormalized volume defined in the previous section to define variants that will be used in the proof of the Lagrangian property in different settings.

\subsubsection{The image of $\Pcomet$ in $\cE_S$ is Lagrangian}
\label{sssc:comet}

Let $h$ be a hyperbolic metric with $n$ cone singularities, of angle less than $2\pi$, on $S$.

To prove that the image of $\Pcomet$ is Lagrangian in $\cE_S$, we define another, closely related version of the renormalized volume, depending as a function of $(\Sigma, \bE)\in \Pcon$, that is, $\bE$ is a hyperbolic end and $\Sigma$ is a compact polyhedral surface in $\bE$ with $n$ vertices.

One can then choose $\Sigma_0$ to be in the compact region of $\bE$ bounded by $\Sigma$, and define
\begin{equation}
  \label{eq:sum}
 \VRcomet(\Sigma,\bE) = V_R(\Sigma_0) + V(\Sigma_0,\Sigma) -\frac 12\sum_{e\in E}l_e\theta_e~, 
\end{equation}
where $V(\Sigma_0,\Sigma)$ is the volume of the ``slice'' of $\bE$ between $\Sigma_0$ and $\Sigma$. Clearly the function $\VRcomet(\Sigma,\bE)$ does not depend on the choice of $\Sigma_0$.

The variational formula in \cite{sem,sem-era}, together with the polyhedral limit argument there, shows that, under a first-order variation, the term corresponding to $\Sigma$ is that of the classical Schl\"afli formula for first-order deformation of compact hyperbolic polyhedra:
$$ \dot V(\Sigma_0,\Sigma) =  \frac 12 \int_{\Sigma_0}\left(\dot H +\frac 12 \langle \dot I, \II\rangle_I\right)da_I + \frac 12 \sum_{e\in E} l_e\dot \theta_e~. $$
It then follows by adding the variations of the terms on the right-hand side of \eqref{eq:sum} 
that $\VRcomet$ has a simple variational formula:
$$ \dot  \VRcomet(\Sigma,\bE) = - \mathrm{Re}(\langle \dot c,q\rangle) - \frac 12\sum_{e\in E} \dot l_e \theta_e~. $$
For first-order deformation which are tangent to the image of $\Pcomet$, $\dot l_e=0$ for all edges, and therefore, on the image of $\Pcomet$
$$ d\VRcomet = - \lambda~, $$
where $\lambda$ is the Liouville form of the symplectic form $\omega$ on $\cE_S$.

It follows that, still on the image of $\Pcomet$,
$$ \omega = d\lambda = -d(d\VRcomet) = 0~, $$
which shows that the image of $\Pcomet$ is Lagrangian in $(\cE_S,\omega)$.

\subsubsection{The image of $\Pcodua$ in $\cE_S$ is Lagrangian}
\label{sssc:codua}

Let $h^*$ be a spherical metric on $S$, with $n$ cone singularities of angle larger than $2\pi$ and closed, contractible geodesics of length larger than $2\pi$. 

The argument used above in Section \ref{sssc:comet} can be adapted to prove that the image of $\Pcodua$ in $\cE_S$ is Lagrangian. It is based on a slightly simpler functional, namely
$$ \VRcodua(\Sigma,\bE) = V_R(\Sigma_0) + V(\Sigma_0,\Sigma)~. $$
The variational formula for this function is
$$ \dot  \VRcodua(\Sigma,\bE) = - \mathrm{Re}(\langle \dot c,q\rangle) + \frac 12\sum_{e\in E} l_e \dot \theta_e~. $$

For infinitesimal deformations tangent to the image of $\Pcodua$, the dihedral angles (which are the lengths of the edges of the dual surface) are constant, so again
$$ d\VRcodua(\Sigma,\bE) = - \lambda~. $$
It then follows, as above, that $\omega$ vanishes on the image of $\Pcodua$, which is therefore Lagrangian.

\subsection{Ideal polyhedral surfaces}

To prove the Lagrangian property in Theorem \ref{tm:ideal}, we introduce a very similar functional. Now $\Sigma$ is an ideal polyhedral surface, and we define again
$$ \VRidang(\Sigma,\bE) = V_R(\Sigma_0) - V(\Sigma_0,\Sigma)~. $$
The variational formula for this functional has a term on $\Sigma$ corresponding to the Schl\"afli formula for ideal polyhedra,
$$ \dot  \VRidang(\Sigma,\bE) = - \mathrm{Re}(\langle \dot c,q\rangle) + \frac 12\sum_{e\in E} l_e \dot \theta_e~. $$
Here the edge lengths $l_e, e\in E$ are defined by choosing a horosphere centered at each vertex, but the formula doesn't depend on the choice of horosphere, since the sum of the (exterior) dihedral angles at the edges coming out of any vertex is always equal to $2\pi$.

For variations tangent to the image of $\Pidang$, $\dot \theta_e=0$ for all $e\in E$. It follows again that, on the image of $\Pidang$,
$$ d\VRidang = -\lambda~, $$
so that the image of $\Pidang$ is Lagrangian in $(\cE_S,\omega)$. 

\subsection{Hyperideal polyhedral surfaces}

For hyperideal polyhedral surfaces, the argument is exactly the same as for compact polyhedral surfaces, but one needs to consider the truncated hyperideal surfaces, as seen in Section \ref{ssc:hyperideal}.

\bibliographystyle{alpha}
\bibliography{/home/jean-marc/Dropbox/papiers/outils/biblio}
\end{document}